\documentclass[11pt, a4paper]{amsart}
\usepackage[utf8]{inputenc}
\usepackage[margin=0.9in]{geometry}

\usepackage{comment}
\usepackage{lmodern}
\usepackage{upgreek}
\usepackage{color}
\usepackage{amsmath}
\usepackage{amssymb}
\usepackage{amsthm}
\usepackage{tikz-cd}

\usepackage{pdfpages}

\usepackage{fancyhdr}

\usepackage{setspace}

\usepackage{hyperref}
\hypersetup{
	colorlinks=true,
	linkcolor=blue,
	filecolor=blue,      
	urlcolor=blue,
	citecolor=blue,
	pdftitle={Connectedness in Codimension One and the Non-S2 Locus},
	pdfauthor={Likun Xie},
	pdfsubject={Commutative algebra},
	pdfkeywords={connectedness in codimension one, non-S2 locus, S2-ification}
}

\numberwithin{equation}{section}
\newtheorem{theorem}{Theorem}[section]
\newtheorem{lemma}[theorem]{Lemma}
\newtheorem{proposition}[theorem]{Proposition}
\newtheorem{corollary}[theorem]{Corollary}

\theoremstyle{remark}
\newtheorem{remark}[theorem]{Remark}

\theoremstyle{definition}

\newtheorem*{prob*}{Problem}

\newcommand{\Spec}{\operatorname{Spec}}
\newcommand{\Supp}{\operatorname{Supp}}
\newcommand{\Ass}{\operatorname{Ass}}
\newcommand{\End}{\operatorname{End}}
\newcommand{\Assh}{\operatorname{Assh}}

\newcommand{\ann}{\operatorname{ann}}
\newcommand{\Hom}{\operatorname{Hom}}
\newcommand{\Ext}{\operatorname{Ext}}
\newcommand{\depth}{\operatorname{depth}}

\title[Connectedness in Codimension One and the Non-\(S_2\) Locus]
{Connectedness in Codimension One and the Non-\(S_2\) Locus}

\author{Likun Xie}
\address{Max-Planck-Institut für Mathematik
	Vivatsgasse 7, 53111, Bonn, Germany} 
\email{xie@mpim-bonn.mpg.de}
\subjclass[2020]{Primary 13D45; Secondary 13C05}
\keywords{connectedness in codimension \(1\), non-\(S_2\) locus, \(S_2\)-ification, local cohomology}

\begin{document}

\begin{abstract}
	We formulate a structural principle for finite  \(S_2\)-objects:
coherent
	\(S_2\)-sheaves  and finitely generated graded \(S_2\)-modules   decompose canonically according to the connected
	components in codimension \(1\) of their support. This gives
	criteria relating indecomposability of \(S_2\)-objects to
	connectedness in codimension $1$ of their supports, and extends the Hochster--Huneke  correspondences  for complete local rings between connectedness in codimension  \(1\), indecomposability of canonical modules,  and localness of the
	\(S_2\)-ifications.

As a consequence, if \(A\) is a local ring admitting a canonical module
\(\omega_A\), there are canonical decompositions of both \(\omega_A\) and the
\(S_2\)-ification \(\operatorname{End}_A(\omega_A)\) whose indecomposable summands are the
canonical modules and \(S_2\)-ifications of the quotient rings associated to the
connected components in codimension \(1\). 
We then apply this viewpoint to the non-\(S_2\) locus. For $A$ equidimensional and
unmixed, this locus is naturally
realized as \(\operatorname{Supp}_A C\) via the \(S_2\)-ification sequence
\(0 \to A \to \operatorname{End}_A(\omega_A) \to C \to 0 \). 
  The natural map between deficiency modules
\(K^{\dim C+1}(A)\to K^{\dim C}(C)\)
identifies the canonical module  \(K^{\dim C}(C)\) with the \(S_2\)-hull of \(K^{\dim C+1}(A)\). Under suitable
conditions, this allows  
codimension-\(1\) connectedness of the non-\(S_2\) locus to be detected by the
deficiency module \(K^{\dim C+1}(A)\). We illustrate the theory with examples
and apply it to codimension \(2\) lattice ideals, obtaining
connectedness-in-codimension-\(1\) results for the non-\(S_2\) loci of certain    toric and lattice rings.
\end{abstract}

	\maketitle	
	\section{Introduction}
Connectedness in codimension \(1\) plays a central role in local algebra.    Faltings' connectedness theorem \cite[Thm.~6]{falting},  in local form,
says that if \((A,\mathfrak m)\) is a complete local domain and \(\mathfrak a\)
is generated by at most \(\dim A-2\) elements, then the punctured spectrum
\(\Spec(A/\mathfrak a)\setminus\{\mathfrak m/\mathfrak a\}\)
is connected.
Hochster and Huneke \cite{hochster} generalized this result by replacing the
domain hypothesis with the indecomposability of 
\(H^{\dim A}_{\mathfrak m}(A)\), equivalently of the canonical module \(\omega_A\).
They further related this condition to the localness of the \(S_2\)-ification
\(\End_A(\omega_A)\) and to connectedness in codimension \(1\) of the
top-dimensional part of \(\Spec A\); see \cite[Thm.~3.3, Thm.~3.6]{hochster}. The generalized Faltings
connectedness theorem can also be viewed as a consequence of codimension-\(1\) connectedness together with
Grothendieck's connectedness theorem. Indeed, in the complete equidimensional
case, connectedness in codimension \(1\) gives \(c(A)\ge \dim A-1\). Hence
Grothendieck's connectedness theorem \cite[Thm.~19.2.10]{local cohomology}  yields
\[
c(A/\mathfrak a)
\ge
\min\{c(A),\operatorname{sdim}A-1\}
-\operatorname{ara}(\mathfrak a)\ge  \dim A-1-(\dim A-2)=1,
\]
so
\(\Spec(A/\mathfrak a)\setminus\{\mathfrak m/\mathfrak a\}\)
is connected; see Section~\ref{connectedness_section}.

Schenzel \cite{connectedness} further recast these connectedness phenomena in
terms of ideal transforms and local cohomology, and obtained additional
equivalences between connectedness properties and indecomposability of certain
modules. A common principle behind these results is that one detects
connectedness of a space by finding a module, or more generally a sheaf of
sections, whose decomposition reflects the connected components of that space. We briefly discuss this section-functor viewpoint in
Remark~\ref{remark:section_functor_connectedness}.
 
The theme of this paper is that the Hochster--Huneke equivalences between
connectedness in codimension \(1\), indecomposability of \(\omega_A\), and
localness of the \(S_2\)-ification are instances of a more general principle:
finite \(S_2\)-objects decompose canonically according to the connected
components in codimension \(1\) of their support. This viewpoint leads
naturally to structure theorems for coherent \(S_2\)-sheaves, finitely
generated graded \(S_2\)-modules, canonical modules, and \(S_2\)-ifications.
It also provides a way to study codimension-$1$ connectedness of defect loci
through naturally associated \(S_2\)-modules, especially when the defect locus
is more naturally described as the support of a defect module than by an
explicit defining ring.

We begin with the following indecomposability criterion for coherent
\(S_2\)-sheaves.
	\begin{theorem} 
	Let \(X\) be a Noetherian scheme and let \(\mathcal F\) be a nonzero coherent
\(S_2\)-sheaf on \(X\). If \(\mathcal F\) is indecomposable, then
\(\operatorname{Supp}\mathcal F\) is connected in codimension \(1\).

Furthermore, assume that for every \(x\in \operatorname{Supp}\mathcal F\) with
\(	\operatorname{codim}_{\operatorname{Supp}\mathcal F}(x)\le 1\),
the stalk \(\mathcal F_x\) is indecomposable as an \(\mathcal O_{X,x}\)-module.
Then
\[
\mathcal F \text{ is indecomposable}
\quad\Longleftrightarrow\quad
\operatorname{Supp}\mathcal F \text{ is connected in codimension }1.
\]
	\end{theorem}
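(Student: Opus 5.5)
Throughout, write \(Z=\operatorname{Supp}\mathcal F\), a closed subset of \(X\). Recall that \(Z\) is connected in codimension \(1\) when, for every closed \(T\subset Z\) with \(\operatorname{codim}_Z T\ge 2\), the space \(Z\setminus T\) is connected. Both directions rest on a single extension principle for \(S_2\)-sheaves:
\[
\text{if } U\subset X \text{ is open and } \operatorname{codim}_Z(Z\setminus U)\ge 2,\ \text{then } \mathcal F\to j_*(\mathcal F|_U) \text{ is an isomorphism.}
\]
Here \(j\colon U\hookrightarrow X\) is the inclusion.

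This follows from the local cohomology criterion. The question is local, so fix \(x\in Z\setminus U\) and put \(I\) for the ideal of \(Z\setminus U\) near \(x\). The \(S_2\) condition gives \(\depth\mathcal F_y\ge\min\{2,\dim\mathcal F_y\}\) for every \(y\). At points \(y\) of \(Z\setminus U\) we have \(\dim\mathcal F_y\ge 2\), because \(\operatorname{codim}_Z(Z\setminus U)\ge 2\); hence \(\depth\mathcal F_y\ge 2\) there. Therefore \(\operatorname{grade}(I,\mathcal F_x)\ge 2\), which forces \(H^0_I=H^1_I=0\). This vanishing is exactly the statement that \(\mathcal F\to j_*j^*\mathcal F\) is an isomorphism.

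\emph{First implication: indecomposable implies connected in codimension \(1\).} I argue by contraposition. Suppose \(Z\setminus T=V_1\sqcup V_2\) with \(V_1,V_2\) nonempty, open and closed in \(Z\setminus T\), and \(\operatorname{codim}_Z T\ge 2\). Set \(U=X\setminus T\). On \(U\) the sheaf \(\mathcal F|_U\) splits as \(\mathcal F_1\oplus\mathcal F_2\), where \(\mathcal F_i\) is the restriction of \(\mathcal F\) to the open set \(U\setminus V_{3-i}\), extended by zero. Applying \(j_*\) and the extension principle gives \(\mathcal F\cong j_*\mathcal F_1\oplus j_*\mathcal F_2\). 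Each summand is a direct summand of the coherent sheaf \(\mathcal F\), hence coherent, and each is nonzero because \(V_i\neq\emptyset\). This contradicts indecomposability.

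\emph{Converse under the stalk hypothesis.} Suppose \(\mathcal F=\mathcal G_1\oplus\mathcal G_2\) with both summands nonzero, and put \(Z_i=\operatorname{Supp}\mathcal G_i\). These are closed sets with \(Z=Z_1\cup Z_2\). Define \(T=Z_1\cap Z_2\). If \(x\in T\) has \(\operatorname{codim}_Z(x)\le 1\), then \(\mathcal F_x=(\mathcal G_1)_x\oplus(\mathcal G_2)_x\) with both summands nonzero, contradicting the hypothesis that \(\mathcal F_x\) is indecomposable. So every point of \(T\) has codimension \(\ge 2\) in \(Z\), and hence \(\operatorname{codim}_Z T\ge 2\). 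Now \(Z\setminus T=(Z_1\setminus T)\sqcup(Z_2\setminus T)\) is a disjoint union of two relatively closed, hence clopen, subsets. Each is nonempty, since every nonempty \(Z_i\) contains points of \(T\)-codimension issues only if \(Z_i\subset T\), but \(Z_i\) contains a generic point of \(Z\) (the component containing \(Z_i\)'s own generic points has codimension \(0\)), which cannot lie in \(T\). Thus \(Z\) is not connected in codimension \(1\). Note that this direction does not use the \(S_2\) hypothesis.

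\emph{Main obstacle.} The delicate step is the extension principle, in particular passing from the pointwise condition \(\operatorname{codim}\ge 2\) to \(\operatorname{grade}\ge 2\). This step needs the correct convention: \(\dim\mathcal F_y\) must be measured so that \(\dim\mathcal F_y\ge\operatorname{codim}_Z\overline{\{y\}}\). It also needs the standard fact that depth along a closed set is the infimum of the local depths of the stalks. I would invoke the \(S_2\)/local cohomology characterization, presumably recalled earlier in the paper, for both points.
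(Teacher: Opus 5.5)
Your extension principle and your proof that indecomposability forces connectedness in codimension \(1\) coincide with the paper's (Lemma~\ref{lemma} and the first half of its proof). The gap is in the converse, at the step asserting that \(Z_1\setminus T\) and \(Z_2\setminus T\) are nonempty. You claim \(Z_i\) contains a generic point of \(Z\) because ``the component containing \(Z_i\)'s own generic points has codimension \(0\)''. But a generic point of \(Z_i\) only lies on some irreducible component of \(Z\); it need not be the generic point of one.

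Your remark that this direction does not use the \(S_2\) hypothesis is in fact false. Take \(X=\mathbb A^2_k\) and \(\mathcal F=\mathcal O_X\oplus\mathcal O_X/\mathfrak m\), with \(\mathfrak m\) the ideal of the origin. Then \(\operatorname{Supp}\mathcal F=X\) is irreducible, hence connected in codimension \(1\). At every point \(x\) of codimension at most \(1\), the stalk \(\mathcal F_x=\mathcal O_{X,x}\) is a local ring, hence indecomposable. Yet \(\mathcal F\) is decomposable; it fails \(S_1\) at the origin. Here \(Z_2=\{0\}\subseteq T\), so \(Z_2\setminus T=\varnothing\), which is exactly the case your argument does not exclude.

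The missing input is that \(\mathcal F\) has no embedded associated points, which holds because \(\mathcal F\) is \(S_2\) (indeed \(S_1\) suffices). Then \(\operatorname{Ass}\mathcal F\) is the set of generic points of \(Z\). Since \(\mathcal G_i\neq 0\) has an associated point and \(\operatorname{Ass}\mathcal G_i\subseteq\operatorname{Ass}\mathcal F\), the set \(Z_i\) contains a generic point of \(Z\). Such a point is not in \(T\), because \(\operatorname{codim}_Z T\ge 2\). This is how the paper argues in the graded version (Theorem~\ref{indecomposable_graded}).

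In the sheaf-theoretic proof the paper instead reuses the extension principle. If \(Z_1\subseteq T\), then \(\operatorname{codim}_Z Z_1\ge 2\), so \(\mathcal F\cong j_*(\mathcal F|_{X\setminus Z_1})\) with \(j\colon X\setminus Z_1\hookrightarrow X\). The inclusion \(\mathcal G_1\hookrightarrow\mathcal F\) restricts to zero on \(X\setminus Z_1\), which forces \(\mathcal G_1=0\). With either argument inserted, your proof is complete.
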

 
More generally, coherent \(S_2\)-sheaves admit canonical decompositions
according to the connected components in codimension \(1\) of their support.
	\begin{theorem} \label{intro_structure}
		Let \(X\) be a Noetherian scheme, and let \(\mathcal F\) be a nonzero coherent
	\(S_2\)-sheaf on \(X\). Let
	\[
	\operatorname{Supp}\mathcal F=S_1\cup\cdots\cup S_r
	\]
	be the decomposition of \(\operatorname{Supp}\mathcal F\) into its connected
	components in codimension \(1\). Then there is a decomposition
	\[
	\mathcal F\cong \bigoplus_{i=1}^r \mathcal F^{(i)},\qquad  \operatorname{Supp}\mathcal F^{(i)}=S_i\quad \text{for all } i,
	\]
	into coherent \(S_2\)-sheaves.
	This decomposition is canonical up to reordering.
	
	Moreover, assume that \(\operatorname{Supp}\mathcal F\) is equidimensional and
	that \(\mathcal F_x\) is indecomposable for every
	\(x\in\operatorname{Supp}\mathcal F\) with
	\(\operatorname{codim}_{\operatorname{Supp}\mathcal F}(x)\le 1\).
	Then each \(\mathcal F^{(i)}\) is indecomposable. In particular, \(\mathcal F\)
	admits a decomposition into indecomposable summands which is unique up to
	reordering.
	\end{theorem}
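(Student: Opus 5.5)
The plan is to build the decomposition from the supports, using the defining feature of an $S_2$-sheaf: sections extend uniquely across closed subsets of codimension $\ge 2$ in the support. Let $Z=\operatorname{Supp}\mathcal F$ and let $Z=S_1\cup\dots\cup S_r$ be its decomposition into connected components in codimension $1$. Each $S_i$ is a union of irreducible components of $Z$. Components in different $S_i$ can be chained only through intersections of codimension $\ge 2$ in $Z$. Hence $W=\bigcup_{i\neq j}(S_i\cap S_j)$ is closed of codimension $\ge 2$ in $Z$, and on the open set $U=X\setminus W$ the sets $S_i\cap U$ are pairwise disjoint and closed in $U$.

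First I would construct the decomposition on $U$. There $Z\cap U$ is a disjoint union of the closed sets $S_i\cap U$, which are therefore also open in $Z\cap U$. Since $\mathcal F|_U$ is supported on $Z\cap U$, it splits canonically as $\bigoplus_i \mathcal G_i$, where $\mathcal G_i$ is $\mathcal F|_U$ supported on $S_i\cap U$; concretely, $\mathcal G_i$ consists of the sections vanishing near the other $S_j$. Let $j\colon U\hookrightarrow X$ be the inclusion and put $\mathcal F^{(i)}=j_*\mathcal G_i$.

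The main step, and the main obstacle, is to prove $\mathcal F\to j_*j^*\mathcal F$ is an isomorphism and that each $\mathcal F^{(i)}$ is a coherent $S_2$-sheaf with support $S_i$. Granting the isomorphism, $\mathcal F\cong\bigoplus_i j_*\mathcal G_i$ follows because $j_*$ commutes with finite direct sums. For the isomorphism I would argue via local cohomology. The condition $S_2$ (depth $\mathcal F_x\ge\min(2,\dim\mathcal F_x)$) together with $\operatorname{codim}_Z W\ge 2$ gives $\operatorname{depth}\mathcal F_x\ge 2$ for $x\in W$. Then $\mathcal H^0_W(\mathcal F)=\mathcal H^1_W(\mathcal F)=0$, and this vanishing is exactly the isomorphism. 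The care needed is in the meaning of codimension: we need $\dim\mathcal F_x\ge 2$ at every point of $W$, which holds because $\operatorname{codim}_Z(x)\ge 2$ for $x\in W$. Each $\mathcal F^{(i)}$ is then a direct summand of $\mathcal F$. Hence it is coherent and $S_2$, since depth and dimension conditions pass to summands, with $\dim$ measured on its own support. Its support is the closure of $S_i\cap U$, which is $S_i$ because $W$ contains no generic point of a component.

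Canonicity comes from the construction. A splitting with $\operatorname{Supp}\mathcal F^{(i)}=S_i$ restricts on $U$ to the unique support splitting. Applying $j_*$, and using that each summand is $S_2$ and hence again satisfies $\mathcal F^{(i)}=j_*j^*\mathcal F^{(i)}$, shows the summands are determined.

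For the indecomposability statement, suppose $\mathcal F^{(i)}=\mathcal A\oplus\mathcal B$ with both nonzero. Each summand is $S_2$, and at generic points of $S_i$ and at codimension-$1$ points the stalks are indecomposable. So each codimension-$\le1$ point of $S_i$ lies in exactly one of $\operatorname{Supp}\mathcal A$ and $\operatorname{Supp}\mathcal B$. Equidimensionality ensures both supports are unions of irreducible components of $S_i$ and that codimension is computed consistently in $S_i$ and in the summands. Two components meeting in codimension $1$ share a codimension-$1$ point $x$. There $\mathcal F_x$ is indecomposable, so both components belong to the same summand. By connectedness in codimension $1$ of $S_i$, one summand must be zero, a contradiction. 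Uniqueness of the full decomposition then follows from the canonicity established above.
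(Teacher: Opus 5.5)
Your proposal is correct and is essentially the paper's proof. You delete $W=\bigcup_{i\neq j}(S_i\cap S_j)$, split $\mathcal F|_U$ along the disjoint pieces $S_i\cap U$, extend back by $j_*$ using the $S_2$-extension property (Lemma~\ref{lemma}, which you reprove via $\mathcal H^0_W(\mathcal F)=\mathcal H^1_W(\mathcal F)=0$), and get indecomposability of each $\mathcal F^{(i)}$ from the same support-and-chain mechanism that underlies Theorem~\ref{sheaf_indecomposable}.

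The one step you should spell out is the final uniqueness claim, because your canonicity statement only compares decompositions already indexed by the $S_i$ with $\operatorname{Supp}=S_i$. You first need that each indecomposable summand $\mathcal G$ of $\mathcal F$ lies over a single $S_i$. This follows by applying your splitting to $\mathcal G\cong j_*j^*\mathcal G$, which holds because $\mathcal G$ is a summand of $\mathcal F$. Then group the summands by $S_i$: each group is nonzero with support exactly $S_i$, and canonicity together with the indecomposability of $\mathcal F^{(i)}$ finishes the argument.
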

We also prove graded analogues, Theorems~\ref{indecomposable_graded} and
\ref{structure_theorem_graded}, for finitely generated \(\Gamma\)-graded
\(S_2\)-modules over Noetherian \(\Gamma\)-graded rings, where \(\Gamma\) is a
torsion-free abelian group.

For a local ring \(A\) with canonical module \(\omega_A\), the ring
\(B:=\operatorname{End}_A(\omega_A)\)
is the \(S_2\)-ification of $A$, and
\(\operatorname{Supp}_A B=\operatorname{Spec}(A/J(A))\). In the
complete local case, \(B\) decomposes as a product of local rings, and this
induces a corresponding decomposition of \(\omega_A\); see
\cite[Thm.~3.2(4)]{aoyama} and \cite[(2.2\,k)]{hochster}. Sather-Wagstaff and Spiroff \cite{counting} showed by a combinatorial argument  that the number of local factors of \(B\) is precisely
the number of connected components in codimension \(1\) of \(\operatorname{Spec}(A/J(A))\).

Using the structure theorem for finite \(S_2\)-modules, we extend this picture
beyond the complete local case. Namely, the connected components in codimension
\(1\) of \(\operatorname{Spec}(A/J(A))\) canonically determine the decomposition
of both \(\omega_A\) and \(B\). Completeness is only needed to ensure that the
factors of \(B\) are local.
\begin{theorem} 
 Let $A$ be a local ring that is a homomorphic image of a Gorenstein local ring. Let $(0) = \bigcap_{p \in \Ass A} Q(p)$ be a primary decomposition of $0$ in $A$ so that $J(A) = \bigcap_{p \in \Assh A} Q(p)$.  Let
\[
\Spec(A/J(A))=S_1\cup\cdots\cup S_r
\]
be the decomposition into connected components in codimension \(1\).

For each \(i\), let \(T_i:=S_i\cap \Assh(A)\). Define
\[
J_i := \bigcap_{p \in T_i} Q(p), \qquad A_i := A/J_i.
\]
\begin{enumerate}
	\item[(a)] The canonical module $\omega_A$ admits a decomposition
	\[	\omega_A \cong \omega_1 \oplus \cdots \oplus \omega_r,\qquad \Supp_A(\omega_i)=S_i \quad \text{for all } i.\]
	Each $\omega_i$ is indecomposable, and this gives the unique decomposition of $\omega_A$ into indecomposable $A$-modules, up to permutation.
	
	\item[(b)] Let $B := \operatorname{End}_A(\omega_A)$. Then   
	\[
	B \cong \prod_{i=1}^r B_i, \qquad B_i := \operatorname{End}_A(\omega_i),\qquad \Supp_A(B_i)=S_i \quad \text{for all } i.
	\]
	Each \(B_i\) is connected, equivalently indecomposable as a ring, and is also
	indecomposable as an \(A\)-module. This is the unique decomposition of \(B\) as
	a product of connected rings, or as a direct sum of indecomposable
	\(A\)-modules, up to permutation.
	
	\item[(c)] Each $\omega_i$ is the canonical module of $A_i$,   and $B_i \cong \End_A(\omega_i)$ is the $S_2$-ification of $A_i$. Let $C$ be the cokernel of the natural injection
	\(\alpha \colon A/J(A) \to B,\)
	and let $C_i$ be the cokernel of the natural injection
	\(	\alpha_i \colon A_i \to B_i.\)
	Then $\alpha$ factors through $\bigoplus_i \alpha_i$, and we have an exact sequence
	\[
	0 \to A/J(A) \to \bigoplus_{i=1}^r A_i \to C \to \bigoplus_{i=1}^r C_i \to 0.
	\]
	\item[(d)] Each $\omega_i$ is the canonical module of $B_i$.
	
	\item[(e)] If $A$ is complete local, then each $B_i$ is local.
\end{enumerate}
\end{theorem}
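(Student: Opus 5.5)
The plan is to deduce everything from Theorem~\ref{intro_structure}, in its local version for finitely generated $S_2$-modules over a Noetherian ring, applied to $\mathcal F=\omega_A$ on $X=\Spec A$.

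Recall the standard facts. First, $\omega_A$ is $S_2$ and $\Supp_A\omega_A=\Spec(A/J(A))$, which is equidimensional. Second, for every $p$ in the support, $(\omega_A)_p\cong\omega_{A_p}$. If $\operatorname{codim}_{\Supp\omega_A}(p)\le 1$, then $\dim A_p\le 1$ and $A_p/J(A_p)$ is Cohen--Macaulay of dimension $\le1$. Then $\End(\omega_{A_p})\cong A_p/J(A_p)$ is local, so $\omega_{A_p}$ is indecomposable: an idempotent endomorphism of $\omega_{A_p}$ is an idempotent of a local ring, hence $0$ or $1$. Theorem~\ref{intro_structure} therefore gives a canonical decomposition $\omega_A\cong\bigoplus\omega_i$ with $\Supp\omega_i=S_i$ and each $\omega_i$ indecomposable, and Krull--Schmidt-type uniqueness follows from the same theorem. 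This proves (a).

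For (b), note that $\Hom_A(\omega_i,\omega_j)=0$ for $i\neq j$. Indeed, such a map has image a submodule of $\omega_j$ whose associated primes lie in $\Ass\omega_j\subseteq\Assh$-primes of $S_j$. Its support also lies in $S_i$. Since $S_i\cap S_j$ has codimension $\ge2$ while $\omega_j$ has no embedded or small associated primes, the image is $0$. Hence $B=\End(\bigoplus\omega_i)=\prod_i\End(\omega_i)$. Each $B_i$ is $S_2$ as an $A$-module, being $\Hom$ into an $S_2$ module, and $\Supp B_i=S_i$. A nontrivial idempotent of $B_i$ would split $\omega_i$, contradicting (a), so $B_i$ is connected. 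An $A$-module splitting $B_i=M\oplus N$ is a splitting of $S_2$-modules. If it were nontrivial, localizing at codimension $\le1$ points, where $(B_i)_p$ is the local ring $A_p/J(A_p)$ (indecomposable), forces both supports to be unions of codimension-$1$-connected pieces, and Theorem~\ref{intro_structure} forbids this. The uniqueness of the decomposition comes from the canonical decomposition together with the orthogonality above.

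For (c), first observe that $J_i$ annihilates $\omega_i$. The associated primes of $\omega_i$ are the $p\in T_i$, and $\omega_i$ is $S_2$ with support $\Spec A_i$. So $\omega_i$ is an $S_2$ $A_i$-module of full dimension. Then use $\omega_{A_i}=\Hom_A(A_i,\omega_A)=\bigoplus_j\Hom(A_i,\omega_j)$. The terms with $j\ne i$ vanish by the same support argument, and $\Hom(A_i,\omega_i)=\omega_i$. Hence $\omega_{A_i}=\omega_i$ and $B_i=\End_{A_i}(\omega_{A_i})$ is the $S_2$-ification of $A_i$. The map $\alpha$ factors as $A/J(A)\to\prod A_i\to\prod B_i$ because it is compatible with the projections. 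The exact sequence then follows from the snake lemma applied to the composite of the injections $A/J(A)\hookrightarrow\bigoplus A_i$ and $\bigoplus\alpha_i$, using that the kernel of $\bigoplus A_i\to C$ is the image of $A/J(A)$. This last statement is the standard kernel--cokernel sequence for a composition of monomorphisms.

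For (d), use the known fact that $\omega_{A_i}$ is the canonical module of its $S_2$-ification $B_i$: $\Hom_{A_i}(B_i,\omega_{A_i})\cong\omega_{A_i}$, since $B_i$ is finite over $A_i$. For (e), in the complete case $B_i$ is a finite $A$-algebra, hence a product of local rings by Hensel's lemma; since it is connected, it is local.

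The main obstacle I expect is the orthogonality $\Hom(\omega_i,\omega_j)=0$ and, more delicately, indecomposability of $B_i$ as an $A$-module. There I would lean on the codimension-$\le1$ stalk analysis, specifically the Cohen--Macaulay property in dimension $\le1$.
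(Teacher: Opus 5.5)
Your proposal is correct. For (a), (b) and (e) it follows the paper's argument:
\begin{itemize}
\item the structure theorem is applied to $\omega_A$;
\item indecomposability at points of codimension $\le 1$ comes from $\End(\omega_{A_{\mathfrak p}})\cong \overline{A}_{\mathfrak p}$ being local;
\item $\Hom_A(\omega_i,\omega_j)=0$ follows by comparing $\Ass\omega_j$ with $S_i$;
\item an idempotent of $B_i$ would split $\omega_i$.
\end{itemize}
Two small points. In (b) you should state, as the paper does, that a point of codimension $\le 1$ in $S_i$ lies on no other $S_j$, so that $(B_i)_{\mathfrak p}=B_{\mathfrak p}\cong\overline A_{\mathfrak p}$. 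In (e) you apply the Henselian splitting directly to the connected ring $B_i$ rather than to $B$ followed by uniqueness; this is the same in substance and slightly more direct.

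The genuine difference is in (c). The paper embeds $A/J(A)$ diagonally into $\bigoplus_i A_i$ and notes that the cokernel has dimension $\le \dim A-2$. This gives $\omega_A\cong\bigoplus_i\omega_{A_i}$, and the paper then matches summands by support using the uniqueness in (a). It obtains $\ann_A\omega_i=J_i$ only as a consequence. You instead compute $\omega_{A_i}\cong\Hom_A(A_i,\omega_A)\cong\bigoplus_j (0:_{\omega_j}J_i)$ and kill the off-diagonal terms with the same associated-prime argument. This is shorter and does not need the uniqueness statement.

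However, your route needs $J_i\omega_i=0$ as input, and you only assert it. The fix is short. The submodule $J_i\omega_i\subseteq\omega_i$ has associated primes in $\Ass\omega_i=T_i$. Each $p\in T_i$ is a minimal prime of $A$, so $Q(p)=\ker(A\to A_p)$. Hence $(J_i)_p\subseteq Q(p)_p=0$, so $(J_i\omega_i)_p=0$, and therefore $J_i\omega_i=0$.

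For (d), finiteness of $B_i$ over $A_i$ is not what makes $\Hom_{A_i}(B_i,\omega_{A_i})\cong\omega_{A_i}$ true; the reason is biduality. The paper argues
$\Hom_A(\End_A(\omega_i),\omega_A)\cong\Hom_A(\Hom_A(\omega_i,\omega_A),\omega_A)\cong K_{K_{\omega_i}}\cong\omega_i$.
This uses $\Hom_A(\omega_i,\omega_j)=0$ for $i\neq j$ and the fact that $\omega_i$ is $S_2$, equidimensional and unmixed. That computation is what your cited ``known fact'' rests on.
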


The same \(S_2\)-module viewpoint is useful in situations where the relevant
closed subset is more naturally described by a defect module than by an explicit
ring. For example, suppose one has an exact sequence
\begin{equation}\label{defect_sequence}
		0 \to A \to B \to C \to 0,
\end{equation}
where \(B\) is an \(S_2\)-ification, a Macaulayfication, a reflexive hull, an
\(S_2\)-hull, or a normalization. Then \(C\) measures the corresponding defect,
and \(\Supp C\) describes the defect locus. In this setting, a natural finite
\(S_2\)-object reflecting the codimension-one geometry of the defect locus is the
canonical module \(K_C\) viewed as an $A$-module; see \eqref{def_canonical_module}.
	
	In this paper, we focus on the \(S_2\)-ification and the non-\(S_2\) locus. 
Assume $C\neq 0$, and let \(t = \dim C\). 
	From the \(S_2\)-ification sequence \eqref{defect_sequence}  there is a natural map from the next deficiency module of \(A\) to the canonical module of \(C\) and this map is in fact an $S_2$-hull map. 
	\begin{theorem} 
		Let $(A,\mathfrak m)$ be a local ring which is a homomorphic image of a Gorenstein local ring and  $J(A)=0$. Assume \(C\neq 0\), and let \(t=\dim C\).	
		Then the connecting morphism arising from the exact sequence \eqref{defect_sequence}
		\[
		\varphi \colon K^{t+1}(A) \to K^t(C)
		\]
		is the $S_2$-hull of $K^{t+1}(A)$ in the sense of \cite[Def.\ 9.3]{Kollar2023}. In particular,
		\[
		K^t(C)\cong K_{K_{K^{t+1}(A)}}.
		\]
	\end{theorem}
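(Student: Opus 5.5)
The plan is to read everything off the long exact sequence of deficiency modules attached to \eqref{defect_sequence}, combined with the deficiency-module characterization of the \(S_2\) property for \(B\).

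\textbf{Setup and long exact sequence.} Write \(A=S/I\) with \(S\) Gorenstein local of dimension \(n\), and put \(K^i(M)=\Ext^{n-i}_S(M,S)\). Let \(d=\dim A\). Since \(J(A)=0\), \(A\) is equidimensional and unmixed. Hence \(B=\End_A(\omega_A)\) is a finite \(S_2\)-module, equidimensional of dimension \(d\), and \(A\to B\) is an isomorphism at all primes of codimension \(\le 1\). It follows that \(t=\dim C\le d-2\). Applying \(\Ext_S(-,S)\) to \(0\to A\to B\to C\to 0\) gives
\[
K^{t+1}(B)\longrightarrow K^{t+1}(A)\xrightarrow{\ \varphi\ } K^{t}(C)\longrightarrow K^{t}(B),
\]
and \(\varphi\) is the connecting map of the statement.

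\textbf{Step 1: dimension bounds.} I use the standard criterion, going back to Schenzel: a finite module \(M\) of dimension \(d\) over a quotient of a Gorenstein ring that is equidimensional satisfies \(S_2\) if and only if \(\dim K^i(M)\le i-2\) for all \(i<d\). Applying this to \(B\), with \(t+1\le d-1<d\), gives
\[
\dim K^{t+1}(B)\le t-1, \qquad \dim K^{t}(B)\le t-2.
\]
Consequently \(\ker\varphi\) has dimension \(\le t-1\) and \(\operatorname{coker}\varphi\subseteq K^t(B)\) has dimension \(\le t-2\).

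\textbf{Step 2: dimensions of source and target.} The target \(K^t(C)=K_C\) is the canonical module of \(C\). It is nonzero of dimension \(t\), and it is \(S_2\), equidimensional on its support, by the standard properties of canonical modules. The exact sequence and the bounds in Step 1 then give \(\dim K^{t+1}(A)\le t\). Since the cokernel of \(\varphi\) has dimension \(\le t-2\), the image of \(\varphi\) has dimension exactly \(t\), so \(\dim K^{t+1}(A)=t\).

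Thus \(\varphi\) has the following three properties:
\begin{itemize}
\item its kernel is supported in dimension \(<t\), so it is contained in the lower-dimensional torsion of \(K^{t+1}(A)\);
\item it is an isomorphism at every point of codimension \(\le 1\) in \(\Supp K^t(C)\);
\item its target is \(S_2\) of dimension \(t\).
\end{itemize}
These are exactly the conditions in \cite[Def.\ 9.3]{Kollar2023} characterizing the \(S_2\)-hull.

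\textbf{Step 3: the final isomorphism.} This follows from the standard facts about the double canonical functor \(M\mapsto K_{K_M}\):
\begin{itemize}
\item it ignores submodules of dimension \(<\dim M\);
\item it turns maps that are isomorphisms in codimension \(\le 1\) with cokernel of codimension \(\ge 2\) into isomorphisms;
\item it satisfies \(K_{K_N}\cong N\) for \(N\) that is \(S_2\) and equidimensional.
\end{itemize}
Applying this to \(\varphi\) gives \(K_{K_{K^{t+1}(A)}}\cong K_{K_{K^t(C)}}\cong K^t(C)\).

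\textbf{Main obstacle.} The step I expect to need the most care is matching precisely the form of Kollár's definition. Specifically, I must check that a kernel of dimension \(\le t-1\), rather than one supported only in codimension \(\ge 2\), is allowed: the hull is taken of the pure quotient. I also need to justify the deficiency-module criterion for \(S_2\) in the generality used, namely for \(B\) as an \(A\)-module when \(B\) is equidimensional but possibly not local. If the criterion needs localization, I would verify \(\dim K^i(B)\le i-2\) prime by prime, using the compatibility \(K^i(B)_{\mathfrak p}\cong K^{i-\dim A/\mathfrak p}(B_{\mathfrak p})\).
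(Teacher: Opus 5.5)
Your proof is correct and essentially follows the paper's. It uses the same four-term sequence and the same bounds $\dim K^{t+1}(B)\le t-1$, $\dim K^{t}(B)\le t-2$ from the $S_2$ criterion, which applies to $B$ as a finite $A$-module whether or not $B$ is local, so that worry is unnecessary. It also carries out the same check of Koll\'ar's conditions.

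There are two minor variations:
\begin{itemize}
\item You get $\dim K^{t+1}(A)=t$ directly from the sequence, instead of quoting Proposition~\ref{prop:assh-equality}.
\item You obtain $K^t(C)\cong K_{K_{K^{t+1}(A)}}$ from $K_{K^{t+1}(A)}\cong K_{\operatorname{im}\varphi}\cong K_{K^t(C)}$ plus biduality for $K^t(C)$. This holds because $K^{t+1},K^{t}$ vanish on modules of dimension $\le t-1$, and $K^{t},K^{t-1}$ vanish on modules of dimension $\le t-2$. The paper instead shows that the biduality map $\tau_{K^{t+1}(A)}$ is also an $S_2$-hull and invokes uniqueness of hulls. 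Your version is slightly more self-contained, since it needs neither uniqueness of hulls nor the properties of $\tau_{K^{t+1}(A)}$.
\end{itemize}

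Your second bullet in Step 2 is false as written and must be fixed. Since $\ker\varphi$ is the image of $K^{t+1}(B)$, it can have dimension exactly $t-1$. In that case $\varphi_{\mathfrak p}$ fails to be injective at codimension-one points of $\Supp K^t(C)$. By the corollary to Proposition~\ref{prop:stratified-comparison}, $\varphi_{\mathfrak p}$ is an isomorphism at such a point if and only if $\depth B_{\mathfrak p}\ge 3$. A concrete failure occurs in the fiber-product family with $k=2$: there $t=1$, $\depth B_{\mathfrak m}=2$, and $\ker\varphi$ has length $4$ and is supported at $\mathfrak m$.

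The correct statement, which is what the hull property amounts to once one passes to the pure quotient as you anticipated, has two parts:
\begin{itemize}
\item $\ker\varphi=J(K^{t+1}(A))$. The inclusion $\subseteq$ holds because $\dim\ker\varphi\le t-1$. The inclusion $\supseteq$ holds because $\Ass_A K^t(C)=\Assh_A C$, so $K^t(C)$ has no nonzero submodule of dimension $<t$.
\item The induced map $\overline{K^{t+1}(A)}\to K^t(C)$ is injective with cokernel of dimension $\le t-2$, hence an isomorphism in codimension $\le 1$.
\end{itemize}
This should replace your second bullet.
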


Under suitable hypotheses, this allows the connectedness in codimension \(1\) of $\Supp(C)$ to be detected by the next deficiency module of $A$.

As an application, we consider lattice ideals of codimension \(2\). Let
\(S=k[x_1,\dots,x_n]\) be a polynomial ring over a field \(k\), and let
\(\mathcal L\subseteq \mathbb Z^n\) be a sublattice of rank \(2\). Set
\(\Gamma:=\mathbb Z^n/\mathcal L\) and \(A:=S/I_{\mathcal L}\), so that
\(\dim A=d=n-2\). Using the \(3\)-step minimal resolution of
\(I_{\mathcal L}\) due to Peeva and Sturmfels \cite{peeva}, we show that the
deficiency module \(K^{d-1}(A)\) is \(\Gamma\)-graded indecomposable. Under
additional hypotheses, this allows a direct comparison with the canonical
module \(K^{d-2}(C)\) of the defect module \(C\), yielding connectedness in
codimension \(1\) of the top-dimensional part of the non-\(S_2\) locus of
\(A\). The condition that \(I_{\mathcal L}\) is minimally generated by at
least \(4\) elements is equivalent to \(A\) being non-Cohen--Macaulay.
\begin{theorem}
	Let $I_{\mathcal L}$ be a lattice ideal of codimension $2$ minimally  generated by at least $4$ elements, and set $A := S/I_{\mathcal L}$. Let $d := \dim A = n-2$. Then $K^{d-1}(A)$ is $\Gamma$-graded indecomposable.

If the non-\(S_2\) locus of \(A\) is nonempty, then it
has dimension \(d-2\). 
Moreover, suppose that $I_{\mathcal L}$ is prime and   that \(K^{d-1}(A)\) is equidimensional and
\(S_2\), then   the top-dimensional part of the non-\(S_2\)
locus,
\(	\operatorname{non}\text{-}S_2^{\mathrm{top}}(A)\),  is connected in codimension \(1\). 
\end{theorem}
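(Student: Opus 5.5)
Three things have to be proved, and I take them in order: graded indecomposability of $K^{d-1}(A)$, the dimension of the non-$S_2$ locus, and connectedness in codimension $1$ of its top-dimensional part.

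\textbf{Indecomposability of $K^{d-1}(A)$.} By Peeva--Sturmfels \cite{peeva}, a codimension-$2$ lattice ideal with at least $4$ minimal generators has a $\Gamma$-graded minimal free resolution of length $3$:
\[
0\to F_3\to F_2\to F_1\to S,
\]
with $F_3$ of rank $r-3$ and $F_2$ of rank $2r-4$, where $r$ is the number of generators. I will use local duality over $S$, in the form $K^{i}(A)\cong \Ext_S^{n-i}(A,\omega_S)$. Since $n-(d-1)=3$, this gives
\[
K^{d-1}(A)\cong \Ext^3_S(A,\omega_S)=\operatorname{coker}\bigl(F_2^*\to F_3^*\bigr)\ (\text{twisted}).
\]
This is a finitely generated graded module of finite length, or of small dimension, with a presentation coming from the dual of the last map of the resolution. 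To show it is $\Gamma$-graded indecomposable, I will use that the graded degree-zero endomorphisms of a module presented this way are controlled by the fine $\mathbb Z^n/\mathcal L$-grading. Every graded piece of $A$ is at most one-dimensional, because $A$ is a positive $\Gamma$-graded ring with $\dim_k A_\gamma\le 1$. I will then argue that the graded endomorphism ring of $K^{d-1}(A)$ is local, so that any idempotent is $0$ or $1$. Concretely, I expect to show that the generators of $F_3^*$ are linked by the explicit Peeva--Sturmfels syzygy matrices: their images are connected through the relations, so no nontrivial splitting of the generator set is compatible with the relations. This combinatorial step, analysing the shape of the last differential, is where I expect the main obstacle.

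\textbf{Dimension of the non-$S_2$ locus.} With $C=B/A$ and $t=\dim C$, the preceding theorem gives $K^t(C)$ as the $S_2$-hull of $K^{t+1}(A)$; in particular $K^{t+1}(A)\neq 0$ with support of dimension $t$. Since the resolution has length $3$, the only deficiency modules that can be nonzero are $K^{d}(A)$ and $K^{d-1}(A)$. Hence $t+1=d-1$ when $C\neq0$ (for $J(A)=0$: a lattice ideal is unmixed, so $A$ is equidimensional and unmixed). Therefore $t=d-2$. I will also check that $\dim K^{d-1}(A)\le d-2$ always holds, consistent with this.

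\textbf{Connectedness in codimension $1$.} Assume $I_{\mathcal L}$ is prime, and that $M:=K^{d-1}(A)$ is equidimensional and $S_2$. Then $M$ is its own $S_2$-hull, so $K^{d-2}(C)\cong M$ through $\varphi$ (using $K_{K_M}\cong M$ for equidimensional $S_2$ modules). Now apply the graded version of the indecomposability criterion, Theorem~\ref{indecomposable_graded}, to the $\Gamma$-graded $S_2$-module $M$. By the first part $M$ is graded indecomposable, so its support is connected in codimension $1$. That support is $\Supp K^{d-2}(C)$, which is precisely the top-dimensional part of $\Supp C=\operatorname{non}\text{-}S_2(A)$. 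This gives the claim.

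There is one subtlety to handle: that $\varphi$ is $\Gamma$-graded, and that the graded criterion needs only graded indecomposability, not ungraded. Both should follow because all constructions, namely $\omega_A$, $\End_A(\omega_A)$ and the connecting maps, are natural and therefore respect the grading.
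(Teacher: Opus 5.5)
\textbf{There is a genuine gap in the first assertion}, which is the real content of the theorem. You leave it as an expected combinatorial step, and the tool you point to does not supply it.

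The second and third assertions are handled correctly. For the dimension statement your route is slightly different from the paper's:
\begin{itemize}
\item You combine Proposition~\ref{prop:assh-equality}, which gives $K^{t+1}(A)\neq 0$, with $K^i(A)=0$ for $i\le d-2$ (from $\operatorname{pd}_S A=3$). This forces $t+1=d-1$.
\item The paper instead supposes $\dim C=d-s$ with $s\ge 3$, gets $K^{d-s}(C)\cong K^{d-s}(B)$, and contradicts $\dim K^{d-s}(B)\le d-s-2$.
\end{itemize}
Your version is shorter. Your remark that lattice ideals are unmixed, so $J(A)=0$, is also correct. In the last part you should say explicitly that primality of $I_{\mathcal L}$ is what makes $\Gamma=\mathbb Z^n/\mathcal L$ torsion-free, as Theorem~\ref{indecomposable_graded} requires. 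Whether $\varphi$ is graded is irrelevant, since only $\Supp K^{d-1}(A)=\Supp^{\mathrm{top}}K^{d-2}(C)$ is used.

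Two ingredients are missing from your argument for graded indecomposability.

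\emph{First, rigidity of the resolution.} Write $F_3=\bigoplus_Q S(-\gamma_Q)$ and $F_2=\bigoplus_T S(-\tau_T)$. You need:
\begin{itemize}
\item the multigraded Betti numbers are $0$ or $1$;
\item distinct minimal syzygy degrees in the same homological degree never differ by the degree of a nonconstant monomial, so $S_{\gamma_{Q'}-\gamma_Q}=0$ for $Q\neq Q'$ and $S_{\tau_{T'}-\tau_T}=0$ for $T\neq T'$.
\end{itemize}
The second point comes from the proof of Peeva--Sturmfels' Lemma~5.6; otherwise the relevant simplicial complex would be contractible. Together these make every degree-zero endomorphism of $F_3^*$ and $F_2^*$ diagonal with scalar entries.

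Without this, your criterion that no splitting of the generator set is compatible with the relations only rules out decompositions adapted to the chosen bases. An arbitrary graded splitting $M=M'\oplus M''$ need not have that form. The fact $\dim_k A_\gamma\le 1$ does not help here: what is needed is that the Hom spaces between distinct shifted summands vanish, not that they are at most one-dimensional.

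\emph{Second, connectivity of the incidence graph.} Consider the bipartite graph on quadrangles and triangles, with $Q$ joined to $T$ exactly when $e_T$ occurs in $d_3(e_Q)$, i.e.\ when $T$ is one of the four triangles of $Q$. You need this graph to be connected. It follows from the homology tree: in a tree-compatible ordering $P_1,\dots,P_{m-3}$, the root contributes four triangles, and each later $P_i$ shares two of its four triangles with earlier quadrangles.

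With these two facts your endomorphism-ring plan does go through, and quite cleanly.
\begin{itemize}
\item A degree-zero endomorphism of $M=\operatorname{coker}d_3^*$ lifts to $\operatorname{diag}(c_Q)$ on $F_3^*$, with $c_Q\in S_0=k$.
\item The degree $-\tau_T$ part of $F_2^*$ is just $k f_T$, so the lift must send $d_3^*(f_T)$ to a scalar multiple of itself.
\item Hence $c_Q=c_{Q'}$ whenever $Q$ and $Q'$ share a triangle.
\item Connectivity then gives $\End^0(M)=k$, so $M$ has no nontrivial graded idempotents.
\end{itemize}
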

In particular, this applies to non-Cohen--Macaulay toric rings \(A\) of
codimension \(2\) whose canonical module \(\omega_A\) is Cohen--Macaulay, or
equivalently whose \(S_2\)-ification \(B\) is Cohen--Macaulay
\cite[Prop.~2.2]{S_2-ification_universal}. For such rings, the non-\(S_2\)
locus coincides with the non-Cohen--Macaulay locus. This includes, for example,
simplicial affine semigroup rings \cite[Thm.~6.4]{ccm}.
\begin{corollary} 
Let \(I_{\mathcal L}\) be a toric ideal of codimension \(2\), minimally
generated by at least \(4\) elements, and set \(A:=S/I_{\mathcal L}\). Suppose
that  the canonical module \(\omega_A\) of \(A\) is  Cohen--Macaulay; for example, this holds when \(A\) is a simplicial affine semigroup ring. Then
the non-\(S_2\) locus of \(A\) coincides with the non-Cohen--Macaulay locus of
\(A\), and its top-dimensional part is connected in codimension \(1\).
\end{corollary}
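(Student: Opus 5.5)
The plan is to reduce the corollary to the preceding theorem on codimension-\(2\) lattice ideals. Since \(I_{\mathcal L}\) is a toric ideal, it is prime. By the standing assumption it is minimally generated by at least \(4\) elements, so \(A\) is not Cohen--Macaulay and the first part of that theorem applies. What remains is to check the extra hypotheses of its last assertion: \(K^{d-1}(A)\) must be equidimensional and \(S_2\). We also need to identify the non-\(S_2\) locus with the non-Cohen--Macaulay locus.

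\emph{Step 1: pass to the \(S_2\)-ification.} Since \(A\) is a domain, \(J(A)=0\) and \(B=\End_A(\omega_A)\) is its \(S_2\)-ification, giving \(0\to A\to B\to C\to 0\). By \cite[Prop.~2.2]{S_2-ification_universal}, \(\omega_A\) is Cohen--Macaulay exactly when \(B\) is Cohen--Macaulay, so \(B\) is a Cohen--Macaulay \(A\)-module of dimension \(d\). We work in the graded/local setting where deficiency modules are defined via the Gorenstein ring \(S\). Local duality applied to the long exact sequence of local cohomology, together with \(H^i_{\mathfrak m}(B)=0\) for \(i<d\), gives \(K^{i}(A)\cong K^{i-1}(C)\) for \(i\le d-1\), and the same holds after localizing at any prime.

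\emph{Step 2: the two loci agree.} For a prime \(p\), \(B_p\) is Cohen--Macaulay, so \(A_p\) is Cohen--Macaulay if and only if \(A_p\to B_p\) is an isomorphism, i.e.\ \(C_p=0\). The latter is exactly the condition that \(A_p\) is \(S_2\), since \(\Supp C\) is the non-\(S_2\) locus. Thus both loci equal \(\Supp C\). By the theorem, this set is nonempty of dimension \(t=d-2\), because \(A\) is not Cohen--Macaulay.

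\emph{Step 3: verify the hypotheses on \(K^{d-1}(A)\).} By Step 1, \(K^{d-1}(A)\cong K^{d-2}(C)=K^{t}(C)\), the canonical module of \(C\). A canonical module is always \(S_2\) and its support is the top-dimensional part \(\operatorname{non}\text{-}S_2^{\mathrm{top}}(A)\) of \(\Supp C\), so it is equidimensional. The last assertion of the theorem then gives connectedness in codimension \(1\). Alternatively, the isomorphism lets us invoke the indecomposability of \(K^{d-1}(A)\) and the \(S_2\) indecomposability criterion (graded version) directly.

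\emph{Step 4: the simplicial case.} For simplicial affine semigroup rings, \(\omega_A\) is Cohen--Macaulay by \cite[Thm.~6.4]{ccm}.

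The main obstacle is Step 1: making the isomorphism \(K^{d-1}(A)\cong K^{d-2}(C)\) precise. This requires checking the index shift, in particular that \(K^{d}(B)\) contributes nothing in degree \(d-1\), and working in the graded setting so that the local-duality identifications remain compatible with the \(\Gamma\)-grading.
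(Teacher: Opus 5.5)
Your proposal is correct and follows the paper's proof. Cohen--Macaulayness of \(B\) gives both the equality of the non-\(S_2\) and non-Cohen--Macaulay loci and the isomorphism \(K^{d-1}(A)\cong K^{d-2}(C)\); this makes \(K^{d-1}(A)\) the canonical module of \(C\), hence equidimensional and \(S_2\), so the last part of Theorem~\ref{lattice_ideal_theorem} applies. Your check that the locus is nonempty (because \(A\) is not Cohen--Macaulay) spells out a point the paper leaves implicit. One small correction: the terms that must vanish for the isomorphism are \(K^{d-1}(B)\) and \(K^{d-2}(B)\); \(K^{d}(B)\) does not enter in that degree.
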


The same connectedness-in-codimension-\(1\) result also applies in certain
cases where \(I_{\mathcal L}\) is not necessarily prime. In the non-prime
case, the graded indecomposability result need not apply directly, since the
grading group $\Gamma$ is not torsion free. However, the case where \(I_{\mathcal L}\)
is minimally generated by exactly four elements is especially explicit which  allows us to
prove directly that \(K^{d-1}(A)\) is indecomposable as an ordinary
\(A\)-module. The nongraded indecomposability--connectedness criterion
then gives the following connectedness in codimension $1$ result.
\begin{proposition} 
	Let $I_{\mathcal L}$ be a lattice ideal of
codimension \(2\), minimally generated by \(4\) elements. Suppose that $A := S/I_{\mathcal L}$ is equidimensional and unmixed. Then the non-\(S_2\) locus of \(A\) is nonempty and has dimension
\(\dim A-2\). Moreover,  $\operatorname{non}\text{-}S_2^{\mathrm{top}}(A)$ is connected in
codimension \(1\), and  the non-\(S_2\) locus of \(A\) coincides
with the non-Cohen--Macaulay locus.
\end{proposition}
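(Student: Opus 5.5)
We will combine the explicit four-generator structure of $I_{\mathcal L}$ with the $S_2$-hull theorem for $\varphi\colon K^{t+1}(A)\to K^t(C)$ and the nongraded indecomposability criterion for coherent $S_2$-sheaves.

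\textbf{Step 1: the explicit resolution and non-Cohen--Macaulayness.} When $I_{\mathcal L}$ is minimally generated by $4$ elements, the Peeva--Sturmfels minimal resolution has the shape
\[
0\to S^{2}\to S^{5}\to S^{4}\to S\to A\to 0,
\]
or the analogous generic form with ranks $4,5,2$ coming from the two syzygy "circuits". Since the projective dimension is $3>2=\operatorname{codim}$, $A$ is not Cohen--Macaulay. By local duality (graded, over $S$),
\[
K^{d-1}(A)\cong \Ext^{3}_S(A,S(-\deg))=\operatorname{coker}\bigl(S^{5}\to S^{2}\bigr)
\]
after dualizing the last map. This presents $K^{d-1}(A)$ explicitly as the cokernel of a $2\times 5$ matrix of binomial entries. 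We also have $K^{j}(A)=0$ for $j<d-1$. From the explicit entries we compute $\dim K^{d-1}(A)=d-2$ via the ideal of $2\times2$ minors or Fitting ideals. Since $A$ is equidimensional and unmixed, $J(A)=0$, so the $S_2$-ification sequence $0\to A\to B\to C\to0$ exists. Standard facts, namely that $\dim K^{d-1}(A)\le d-2$ under unmixedness and that the non-$S_2$ locus is $\Supp C$ with $\dim C=\dim K^{d-1}(A)$ in this range, then give that the locus is nonempty of dimension $d-2=t$. Since depth of $A$ at every prime is at least $\dim-1$ (because $K^j=0$ for $j<d-1$ globally, localized), being non-$S_2$ and being non-CM coincide: a localization $A_p$ is non-CM exactly when $K^{d-1}(A)_p\ne0$ in the appropriate shifted index, and this is precisely where $S_2$ fails. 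That proves the last clause.

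\textbf{Step 2: indecomposability of $K^{d-1}(A)$.} We show $\End_A(K^{d-1}(A))$ has no nontrivial idempotents. Using the presentation $S^5\to S^2\to K\to0$, an endomorphism lifts to a $2\times2$ matrix $\Phi$ over $S$ satisfying $\Phi M=MN$ for some $N$. Minimality means the entries of $M$ lie in $\mathfrak m$. Reducing modulo $\mathfrak m$, or working degreewise using the $\mathbb Z^n/\mathcal L$-grading even though it has torsion, we argue that $\Phi$ is congruent to a scalar mod the relevant entries. The two generators of $K$ live in distinct degrees that are linked by a binomial relation in $M$ with both monomials nonzero; this forces an idempotent to be $0$ or $1$ modulo $\mathfrak m$, and hence trivial, since $K$ is finitely generated over a local ring or over a positively graded ring after localizing at the homogeneous maximal ideal. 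We expect this to be the main obstacle: the case analysis on the shape of the matrix (the generic versus degenerate Peeva--Sturmfels cases) and ensuring no idempotent is hidden by the torsion in $\Gamma$.

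\textbf{Step 3: passage to $C$.} By the $S_2$-hull theorem, $\varphi\colon K^{t+1}(A)\to K^t(C)$ is the $S_2$-hull. The top-dimensional part $K^t(C)$ is then an $S_2$-module with support $\operatorname{non}\text{-}S_2^{\mathrm{top}}(A)$. An idempotent of $K^t(C)$ restricts, since $\End$ of the hull receives $\End$ of $K^{t+1}(A)$ via the universal property of $S_2$-hulls (an injective ring map modulo lower-dimensional torsion). Indecomposability of $K^{d-1}(A)$ therefore gives that $K^t(C)$ is indecomposable. The first theorem (indecomposable $S_2$-sheaf implies support connected in codimension $1$), applied on the punctured spectrum or directly to $\Spec A$, then yields that $\operatorname{non}\text{-}S_2^{\mathrm{top}}(A)$ is connected in codimension $1$.
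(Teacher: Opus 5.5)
There are two genuine gaps.

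\textbf{The resolution, and hence $K^{d-1}(A)$, is wrong.} The input to your Steps~1 and~2 is incorrect. For four minimal generators the Peeva--Sturmfels resolution has a single syzygy quadrangle with its four triangles, so it is
\[
0\to S\xrightarrow{d_3}S^4\xrightarrow{d_2}S^4\xrightarrow{d_1}S\to A\to 0,
\]
not $0\to S^2\to S^5\to S^4\to S$. Moreover, $d_3$ is a column of pairwise coprime \emph{monomials} $\pm x^{\underline s},x^{\underline t},x^{\underline r},\pm x^{\underline p}$, not binomials. In general there are $m-3$ quadrangles and $2m-4$ triangles.

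Hence $K^{d-1}(A)=\operatorname{coker}(d_3^*)\cong S/(x^{\underline s},x^{\underline t},x^{\underline r},x^{\underline p})$. This module is cyclic, and since the monomials form a regular sequence it is Cohen--Macaulay of dimension $d-2$. Your $2\times 5$ presentation, the Fitting-ideal dimension count (which you only assert), and the idempotent analysis you flag as the main obstacle all concern a module that does not occur. With the correct presentation, indecomposability is immediate: $\End_A(K^{d-1}(A))\cong S/(x^{\underline s},x^{\underline t},x^{\underline r},x^{\underline p})$ is $\mathbb N$-graded with degree-$0$ part $k$, so it has no nontrivial idempotents.

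The same missing fact undermines your last clause. Your route is: depth $A_{\mathfrak p}\ge\dim A_{\mathfrak p}-1$ everywhere, so the non-CM locus is $\Supp K^{d-1}(A)$, and $S_2$ fails exactly at height-$2$ points of it. This is a reasonable alternative to the paper's argument, which shows $\omega_A$ is CM by a depth chase on the dualized resolution and hence $B$ is CM. But your route needs every minimal prime of $\Supp K^{d-1}(A)$ to have height $2$ in $A$; a height-$3$ component would be non-CM yet $S_2$. That height condition again comes only from $K^{d-1}(A)$ being CM of dimension $d-2$.

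\textbf{Indecomposability does not ascend to the $S_2$-hull.} Step~3 rests on a false principle. The universal property gives an injective ring map $\End_A(\overline{K^{t+1}(A)})\to\End_A(K^t(C))$. To pull back an idempotent, however, you would need every endomorphism of $K^t(C)$ to preserve the image of $\overline{K^{t+1}(A)}$, and this fails in general. For instance, $R=k[x,y,z,w]/((x,y)\cap(z,w))$ is indecomposable, equidimensional and unmixed, while its $S_2$-hull is $k[z,w]\times k[x,y]$. This is exactly why Theorem~\ref{sheaf_indecomposable} requires the $S_2$ hypothesis.

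The paper avoids this issue altogether. Since $K^{d-1}(A)$ is CM, it is already $S_2$, equidimensional and unmixed, so $\varphi$ is in fact an isomorphism. Proposition~\ref{prop:assh-equality} then gives $\Supp K^{d-1}(A)=\operatorname{non}\text{-}S_2^{\mathrm{top}}(A)$. Finally, Theorem~\ref{sheaf_indecomposable} is applied directly to the indecomposable $S_2$-module $K^{d-1}(A)$, giving connectedness in codimension $1$.
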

Finally, we include examples showing that \(S_2\)-ifications can be
\(S_k\) but not \(S_{k+1}\), and that the defect module \(C\) need not arise
naturally as a quotient ring of \(A\).

\subsection{Notation}
Throughout the paper, ``local ring'' means Noetherian local ring. For an $A$-module $M$, we denote by $\Assh_A(M)$ the  set of associated primes \(\mathfrak p \in \Ass_A(M)\)  such that $\dim A/\mathfrak p = \dim M$. We write $J(M)$ for the largest submodule of $M$ of dimension strictly less than $\dim M$, and define the top-dimensional quotient of $M$ by $\overline{M} := M/J(M)$.

We write $\omega_A$ for the canonical module of a local ring $A$, and $K_M$ for the canonical module of an $A$-module $M$ as defined in \eqref{def_canonical_module}. We say that $M$ is equidimensional if all minimal primes of $M$ have dimension $\dim M$, and that $M$ is unmixed if it has no embedded primes, i.e.\ if all associated primes of $M$ are minimal.

For a local ring $(A,\mathfrak m,k)$, we denote by $(-)^\vee$ the Matlis dual functor $(-)^\vee := \Hom_A(-,E_A(k))$, where $E_A(k)$ is the injective hull of $k$ over $A$.

\section{The non-\(S_2\) locus and deficiency modules}
In this section, we briefly review the non-\(S_2\) locus, ideal transforms, canonical modules, and the deficiency modules of finitely generated modules.
\subsection{The non-\(S_2 \) locus}\label{set-up_non-S_2}
Let $(A,\mathfrak m)$ be a local ring that is a homomorphic image of a Gorenstein local ring $R$. 
Set $\dim A=d$ and $\dim R=n$, and let
\[
\omega_A \cong \operatorname{Ext}_R^{\,n-d}(A,R)
\]
be the canonical module of $A$.
Let  $M$ be a finitely generated $A$-module with $\dim M = k$. 
Define $J(M)$ to be the largest submodule of $M$ of dimension $< k$.
Concretely, if
\[0=\bigcap_{i=1}^n N_i\]
is a primary decomposition of the zero submodule of $M$, where each $N_i$ is $\mathfrak p_i$-primary, then
\[
J(M)=\bigcap_{\dim A/\mathfrak p_i=k} N_i.
\]
The \emph{top-dimensional quotient} of $M$ is
\begin{equation}\label{top}
	\overline{M}:=M/J(M).
\end{equation}
Then $\overline{M}$ is unmixed and equidimensional.

There is a natural homomorphism induced by multiplication
\[
A \longrightarrow B:=\Hom_A(\omega_A,\omega_A),
\]
whose kernel is precisely \(J(A)\); see \cite[Remark~12.2.5]{local cohomology}. Let \(C\) denote the cokernel of this map. After replacing \(A\) by its top-dimensional quotient \(A/J(A)\), we may assume that \(A\) is unmixed and equidimensional. Thus there is  a short exact sequence
\begin{equation}\label{exact_ABC}
	0 \longrightarrow A  \longrightarrow B \longrightarrow C \longrightarrow 0.
\end{equation}
Let $I:=\ann_A(C)$. Then $\operatorname{ht}(I)\ge 2$, and the non-\(S_2\) locus of $A$ is given by \cite[Thm.~12.2.7, Lem.~12.2.10]{local cohomology}
\[
\operatorname{non}\text{-}S_2(A)
:=\{\mathfrak p\in \Spec A \mid A_{\mathfrak p}\text{ is not }S_2\}
=V(I).
\]

\subsection{Ideal transform}

Let \(A\) be a Noetherian ring, \(\mathfrak a\subseteq A\) an ideal, and
\(M\) an \(A\)-module.  The \emph{ideal transform} of \(M\) with
respect to \(\mathfrak a\), also called the \(\mathfrak a\)-transform of \(M\), is defined by
\[
D_{\mathfrak a}(M) := \varinjlim_{n \in \mathbb N} \operatorname{Hom}_A(\mathfrak a^n,M).
\]
Set \(X=\Spec A\) and \(U=X\setminus V(\mathfrak a)\). By the affine Deligne isomorphism
\cite[Ex.~III.3.7]{ag_hartshorne}
(see also \cite[Thm.~20.1.14,  Example~20.3.4]{local cohomology}),
there is a natural isomorphism
\begin{equation}\label{delign}
	D_{\mathfrak a}(M)\cong \Gamma(U,\widetilde M).
\end{equation}
More generally, let $(\Lambda,\leq)$ be a directed partially ordered set, and let 
\(\mathfrak B = (\mathfrak b_\alpha)_{\alpha \in \Lambda}\)
be an inverse system of ideals of $A$ such that for every
$\alpha,\gamma\in \Lambda$ there exists $\delta\in \Lambda$ satisfying
\(\mathfrak b_\delta \subseteq \mathfrak b_\alpha \mathfrak b_\gamma .\)
Following \cite[Section~2.2]{local cohomology}, the \emph{generalized ideal transform}
of $M$ with respect to $\mathfrak B$ is defined by
\[
D_{\mathfrak B}(M)
:=
\varinjlim_{\alpha\in \Lambda}
\Hom_A(\mathfrak b_\alpha,M).
\]
Let $\mathfrak H$ denote the directed system of ideals of $A$ of height at least $2$,
partially ordered by reverse inclusion. Let $\mathfrak S$ denote the directed system of ideals
$\mathfrak a\subseteq A$ such that
\(V(\mathfrak a)\subseteq \operatorname{non}\text{-}S_2(A).\)
By \cite[Thm.~12.3.10(v)]{local cohomology} together with \cite[Lem.~6.2]{connectedness},
there are natural $A$-algebra isomorphisms
\[
B
\cong
D_I(A)
\cong
D_{\mathfrak H}(A)
\cong
D_{\mathfrak S}(A).
\]
Thus the \(S_2\)-ification may be viewed equivalently as an ideal transform.

\subsection{Canonical modules and deficiency modules}

The notion of canonical module extends naturally from rings to finitely generated modules.
Let \((A,\mathfrak m)\) be a local ring that is a homomorphic image of a Gorenstein local ring \(R\), and set \(\dim A=d\) and \(\dim R=n\).
For a finitely generated \(A\)-module \(M\) and an integer \(i\), define the \emph{\(i\)-th deficiency module of \(M\)} by
\[
K_A^i(M):=\Ext_R^{n-i}(M,R),
\]
so that \(K_A^d(A)=\omega_A\) is the canonical module of \(A\).
If \(\dim M=t\), the \emph{canonical module of \(M\)} is
\begin{equation}\label{def_canonical_module}
K_M:=K_A^t(M)=\Ext_R^{n-t}(M,R).
\end{equation}
When the base ring $A$ is fixed, we write $K^i(M)$ for $K^i_A(M)$. 
 
This agrees with Schenzel's definition via a dualizing complex; see
\cite{lecture_notes,canonical_module}. By \cite[Corollary~1.4]{dualizing},
a local ring admits a dualizing complex if and only if it is a homomorphic
image of a Gorenstein local ring. Thus, throughout this section, working with
local rings that are homomorphic images of Gorenstein local rings is equivalent
to working with local rings that admit dualizing complexes, and the two
definitions of canonical modules and deficiency modules agree.

We record some basic properties that will be used repeatedly; see \cite[Lem.~3.1.1 and 3.2.1]{lecture_notes} and \cite[Prop.~3.1]{canonical_module}.

\begin{proposition}\label{prop_canonical_module}
	Let $M$ be a finitely generated $A$-module. Then
	\begin{enumerate}
		\item[(a)] $\dim K_M = \dim M$, and $\dim K ^i(M) \leq i$ for all $0 \leq i < \dim M$.
		
		\item[(b)] $\operatorname{Ass}_A(K_M) = \operatorname{Assh}_A(M)$, and $K_M$ is $S_2$.
		
		\item[(c)] $M$ satisfies $ S_r  $ if and only if $\dim K ^i(M) \leq i - r$ for all $0 \leq i < \dim M$.
	\end{enumerate}
\end{proposition}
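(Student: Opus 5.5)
The plan is to reduce all three statements to local duality over the Gorenstein ring $R$, which presents $A$, together with the structure of local cohomology over a Gorenstein local ring. First we pass from $A$ to $R$. Since $A$ is a quotient of $R$, every finitely generated $A$-module $M$ is a finitely generated $R$-module, and by the independence-of-base lemma $\dim_A M=\dim_R M$, $\Ass_A M$ corresponds to $\Ass_R M$, and depth is unchanged. By local duality over $R$ (with $\dim R=n$),
\[
K^i(M)=\Ext_R^{n-i}(M,R)\cong H^i_{\mathfrak m}(M)^\vee ,
\]
up to completion, which is harmless for dimension and associated-prime statements after localization. Localizing at $\mathfrak p\in\Supp M$ with $h=\dim R/\mathfrak p$, and using that $R$ is catenary and equidimensional, we get
\[
K^i(M)_{\mathfrak p}\cong \Ext_{R_{\mathfrak p}}^{(n-h)-(i-h)}(M_{\mathfrak p},R_{\mathfrak p}),
\]
which is the $(i-h)$-th deficiency module of $M_{\mathfrak p}$ over $R_{\mathfrak p}$. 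We will use this shift repeatedly.

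For (a), $\dim K^i(M)\le i$ follows because $\mathfrak p\in\Supp K^i(M)$ forces $H^{i-h}_{\mathfrak pR_\mathfrak p}(M_\mathfrak p)\neq 0$, so $i-h\ge 0$. The equality $\dim K_M=\dim M$ uses that $\mathfrak p\in\Assh M$ gives $\dim M_\mathfrak p=0$ with $i-h=0$, where the top local cohomology is nonzero by Grothendieck nonvanishing. Conversely, nonvanishing at $\mathfrak p$ with $h=t$ forces $\mathfrak p$ to be minimal of maximal dimension.

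For (b), we first reduce to $R$ of dimension $t$. Choose an $R$-regular sequence $x_1,\dots,x_{n-t}$ in $\ann_R M$; this is possible because $\operatorname{grade}(\ann M)=\operatorname{ht}(\ann M)=n-t$ since $R$ is Cohen–Macaulay. Rees' lemma then gives $K_M\cong\Hom_{R'}(M,R')$ with $R'=R/(x)$ Gorenstein of dimension $t$. Associated primes of a $\Hom$ into $R'$ are those primes of $\Ass R'$ lying in $\Supp M$, and these are exactly the minimal primes of $R'$ in $\Supp M$, namely $\Assh M$. The $S_2$ property follows from the fact that $\Hom_{R'}(M,R')$ has depth $\ge\min(2,\operatorname{depth}R')$ locally, by taking a free presentation $F_1\to F_0\to M\to 0$ and dualizing to obtain $0\to K_M\to F_0^*\to F_1^*$.

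For (c), combine the localization shift with the definition of $S_r$: $M$ is $S_r$ iff $\operatorname{depth}M_\mathfrak p\ge\min(r,\dim M_\mathfrak p)$ for all $\mathfrak p$. This holds iff $H^j_{\mathfrak p}(M_\mathfrak p)=0$ for $j<\min(r,\dim M_\mathfrak p)$. After the shift $j=i-h$, this says that $\mathfrak p\in\Supp K^i(M)$ with $i<\dim M$ requires $i-h\ge r$, i.e. $h\le i-r$, which is the bound $\dim K^i(M)\le i-r$.

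The main obstacle is the bookkeeping in (c): the indices $i<\dim M$ must match the condition $j<\dim M_\mathfrak p$. This requires equidimensionality-type relations between $\dim M_\mathfrak p+h$ and $\dim M$, which can fail in general. We expect to handle it as Schenzel does, by checking the two inequalities separately and using $\dim M_\mathfrak p+\dim R/\mathfrak p\le\dim M$.
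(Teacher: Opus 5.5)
Your arguments for (a) and (b) are sound, and they follow the standard route behind the results the paper cites. For (a) you combine the shift $K^i(M)_{\mathfrak p}\cong K^{i-h}_{R_{\mathfrak p}}(M_{\mathfrak p})$, where $h=\dim R/\mathfrak p$, with local duality. For (b) you use Rees' lemma, $K_M\cong\Hom_{R'}(M,R')$ with $R'$ Gorenstein of dimension $t$, and then dualize a presentation.

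The gap is in (c), at exactly the point you flag, and the inequality $\dim M_{\mathfrak p}+\dim A/\mathfrak p\le\dim M$ cannot close it. That inequality only gives the implication from the dimension bounds to $S_r$. Indeed, if $j<\min\{r,\dim M_{\mathfrak p}\}$, then $i=j+h<\dim M$, and $K^i(M)_{\mathfrak p}\neq 0$ would give $h\le i-r<h$. In the forward direction, $\mathfrak p\in\Supp K^i(M)$ only gives $i-h\ge\min\{r,\dim M_{\mathfrak p}\}$. The case $i-h=\dim M_{\mathfrak p}<r$ is ruled out by $i<\dim M$ only when $h+\dim M_{\mathfrak p}=\dim M$.

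This is not a bookkeeping issue: the forward implication is false as stated. Take $R=k[[x,y,z]]$ and $M=A=R/(xy,xz)=R/\bigl((x)\cap(y,z)\bigr)$. This ring is reduced, hence $S_1$, and has dimension $2$. At $\mathfrak p=(y,z)$ we have $h=1$, and $A_{\mathfrak p}=R_{\mathfrak p}/\mathfrak pR_{\mathfrak p}$ is a field. So your shift gives $K^1(A)_{\mathfrak p}\neq 0$; in fact there is an exact sequence $0\to R/(y,z)\to K^1(A)\to k\to 0$. Thus $\dim K^1(A)=1>1-1$, which violates (c) for $r=1$ and $i=1<\dim A$.

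Your argument does prove (c) for equidimensional $M$. In that case $\dim M_{\mathfrak p}+\dim A/\mathfrak p=\dim M$ for every $\mathfrak p\in\Supp M$, because $A$ is a quotient of a Cohen--Macaulay local ring, so your index matching works in both directions. Equivalently, for $r\ge 1$ the bounds $\dim K^i(M)\le i-r$ $(0\le i<\dim M)$ hold if and only if $M$ is equidimensional and $S_r$. The bounds already force equidimensionality: any $\mathfrak p\in\Ass M$ with $h=\dim A/\mathfrak p<\dim M$ gives $K^h(M)_{\mathfrak p}\neq 0$, hence $\dim K^h(M)\ge h$.

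So you should add the equidimensionality hypothesis to (c) explicitly rather than try to avoid it. This costs the paper nothing. Part (c) is only ever applied to the $S_2$-ification $B$ of an equidimensional unmixed ring, and to rings $A$ that are themselves equidimensional and unmixed.
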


\section{Connectedness and decomposition theorems}

We recall the convention used throughout this section. Let \(X\) be a
Noetherian topological space. We say that \(X\) is \emph{connected in
	codimension \(k\)} if, for every closed subset \(Z\subseteq X\) with
\(\operatorname{codim}_X Z>k\), the complement \(X\setminus Z\) is connected.

This is equivalent to the following condition \cite[Prop.~1.1]{hartshorne}:
for any two irreducible components \(X'\) and \(Y'\) of \(X\), there is a chain
of irreducible components
\[
X'=X_0,X_1,\dots,X_r=Y'
\]
such that
\[
\operatorname{codim}_X(X_{i-1}\cap X_i)\le k
\qquad\text{for all }i=1,\dots,r.
\]

In particular, this chain condition defines an equivalence relation on the irreducible components of \(X\), partitioning them into disjoint equivalence classes. The union of the irreducible components in a given equivalence class is called a \emph{connected component of \(X\) in codimension \(k\)}.

If \(X=V(\mathfrak a)\subseteq \operatorname{Spec} A\), and \( X_i = V(\mathfrak p_i)\) are irreducible components of $X$, where $\mathfrak p_i$ are minimal primes over \(\mathfrak a\), then
\[
\operatorname{codim}_X(X_{i-1}\cap X_i)\le k
\]
is equivalent to the existence of a prime
\(\mathfrak P_i\in X\) such that
\[
\mathfrak p_{i-1}+\mathfrak p_i\subseteq \mathfrak P_i
\qquad\text{and}\qquad
\operatorname{ht}_{A/\mathfrak a}(\mathfrak P_i/\mathfrak a)\le k.
\]

\subsection{Indecomposability   and connectedness in codimension $1$}

  In this section, we prove
indecomposability--connectedness criteria for coherent \(S_2\)-sheaves and
for finitely generated \(\Gamma\)-graded \(S_2\)-modules, where \(\Gamma\) is a
torsion-free abelian group. The key input is the following extension property
for \(S_2\)-modules. A geometric proof can be found in
\cite[Thm.~1.17, Cor.~1.18]{schwede}; we include an algebraic proof for
completeness.

\begin{lemma}\cite[Cor.~1.18]{schwede}\label{lemma}
Let \(X\) be a Noetherian scheme and let \(\mathcal F\) be a coherent \(S_2\)-sheaf on \(X\). Let \(Y\subseteq X\) be a closed subset such that
\[
\operatorname{codim}_{\Supp\mathcal F}(Y\cap \Supp\mathcal F)\ge 2.
\]
Set \(U=X\setminus Y\), and let \(j\colon U\hookrightarrow X\) denote the inclusion. Then the natural map
\[
\mathcal F \to j_*(\mathcal F|_U)
\]
is an isomorphism.
\end{lemma}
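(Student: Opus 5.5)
The question is local on \(X\), so I will reduce to an algebraic statement and then prove that using local cohomology and depth.

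\textbf{Reduction.} Both \(\mathcal F\) and \(j_*(\mathcal F|_U)\) are quasi-coherent: \(j\) is quasi-compact and quasi-separated because \(X\) is Noetherian. Hence it suffices to work on an affine open \(\operatorname{Spec}A\) with \(A\) Noetherian and \(\mathcal F=\widetilde M\) for a finitely generated \(S_2\)-module \(M\). Write \(Y\cap\operatorname{Spec}A=V(\mathfrak a)\). By the affine Deligne isomorphism \eqref{delign}, \(\Gamma(U,\widetilde M)\cong D_{\mathfrak a}(M)\). There is a standard exact sequence
\[
0\to \Gamma_{\mathfrak a}(M)\to M\to D_{\mathfrak a}(M)\to H^1_{\mathfrak a}(M)\to 0 .
\]
So the claim reduces to showing \(H^0_{\mathfrak a}(M)=H^1_{\mathfrak a}(M)=0\). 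By the depth characterisation of local cohomology, this is equivalent to \(\operatorname{grade}(\mathfrak a,M)\ge 2\), i.e.\ \(\mathfrak aM\ne M\) forces \(\depth(\mathfrak a,M)\ge2\). The inequality is automatic if \(\mathfrak a+\ann M=A\), since then \(\mathfrak a\not\subseteq\) any prime of \(\Supp M\).

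\textbf{Main step.} I use the formula
\[
\operatorname{grade}(\mathfrak a,M)=\min\{\depth M_{\mathfrak p}:\mathfrak p\in V(\mathfrak a)\cap\Supp M\}.
\]
Take \(\mathfrak p\in V(\mathfrak a)\cap\Supp M\). The hypothesis says \(\operatorname{codim}_{\Supp M}(V(\mathfrak p))\ge2\), which means \(\dim M_{\mathfrak p}\ge 2\). The \(S_2\) condition then gives \(\depth M_{\mathfrak p}\ge\min(2,\dim M_{\mathfrak p})=2\). Therefore the grade is at least \(2\), \(H^0_{\mathfrak a}(M)=H^1_{\mathfrak a}(M)=0\), and \(M\to D_{\mathfrak a}(M)\) is an isomorphism.

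\textbf{Main obstacle.} The delicate point is the codimension convention. The hypothesis must give \(\dim M_{\mathfrak p}\ge2\) for every \(\mathfrak p\in Y\cap\Supp\mathcal F\). This holds because \(\operatorname{codim}_{\Supp\mathcal F}(Y\cap\Supp\mathcal F)\) is the infimum of the codimensions of irreducible components. Each point \(\mathfrak p\) lies in some component, and its codimension is at least that of the component, so the infimum bound applies to every \(\mathfrak p\). The remaining checks are routine: the local isomorphisms glue to the sheaf map \(\mathcal F\to j_*(\mathcal F|_U)\), by compatibility of \eqref{delign} with restriction to smaller affine opens.
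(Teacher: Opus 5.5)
Your proposal is correct and follows essentially the same route as the paper. Both reduce to the affine case, identify $\Gamma(U,\widetilde M)$ with the ideal transform $D_{\mathfrak a}(M)$, and kill $H^0_{\mathfrak a}(M)$ and $H^1_{\mathfrak a}(M)$ via $\inf_{\mathfrak p\in V(\mathfrak a)}\depth M_{\mathfrak p}\ge 2$, using the $S_2$ condition together with $\dim M_{\mathfrak p}\ge 2$. Your additional care with the pointwise reading of the codimension hypothesis, the case $\mathfrak aM=M$, and the gluing step only makes explicit what the paper leaves implicit.
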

\begin{proof}
	This is local on \(X\), so we may assume that \(X=\Spec A\) and
	\(\mathcal F=\widetilde M\), where \(M\) is a finitely generated \(S_2\)
	\(A\)-module. Write $U = \operatorname{Spec} A \setminus V(I)$ for some ideal $I \subseteq A$ such that $\operatorname{ht} \overline{I} \geq 2$, where $\overline{I}$ denotes the image of $I$ in $\overline{A} := A/\operatorname{ann} M$.
	
	Since \(M\) is \(S_2\) as an $A$-module, we have
	\[
\operatorname{depth}_I(M) = \inf_{\mathfrak{p} \supseteq I} \operatorname{depth} M_{\mathfrak{p}}
\geq \inf_{\mathfrak{p} \supseteq I} \min\{2,\dim M_{\mathfrak{p}}\}
= \min\{2, \inf_{\mathfrak{p} \supseteq \overline{I}} \dim \overline{A}_{\mathfrak{p}}\}
= \min\{2, \operatorname{ht} \overline{I}\} = 2.
\]
	Thus \(H_I^0(M)=H_I^1(M)=0\). By the isomorphism \eqref{delign} and the exact sequence for ideal transforms
	\cite[Thm.~2.2.6]{local cohomology},
	\[
	0\to H_I^0(M)\to M\to D_I(M)\to H_I^1(M)\to 0,
	\]
	we obtain $M \cong D_I(M) \cong \Gamma(U, \widetilde{M})$.  Since   
	\(  j_*(\widetilde M|_U)\cong   \widetilde{D_I(M)}\),
    it follows that
	\(	\widetilde M\cong j_*(\widetilde M|_U)\)
	as claimed.
\end{proof}

\begin{theorem}\label{sheaf_indecomposable}
	Let \(X\) be a Noetherian scheme and let \(\mathcal F\) be a nonzero coherent
	\(S_2\)-sheaf on \(X\). If \(\mathcal F\) is indecomposable, then
	\(\operatorname{Supp}\mathcal F\) is connected in codimension \(1\).
	
	Furthermore, assume that for every \(x\in \operatorname{Supp}\mathcal F\) with
\(	\operatorname{codim}_{\operatorname{Supp}\mathcal F}(x)\le 1\),
	the stalk \(\mathcal F_x\) is indecomposable as an \(\mathcal O_{X,x}\)-module.
	Then
	\[
	\mathcal F \text{ is indecomposable}
	\quad\Longleftrightarrow\quad
	\operatorname{Supp}\mathcal F \text{ is connected in codimension }1.
	\]
\end{theorem}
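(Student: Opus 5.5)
We prove the first assertion by contraposition, using Lemma~\ref{lemma} to glue a decomposition defined away from a codimension-\(2\) set. Put \(Z=\operatorname{Supp}\mathcal F\), a closed subset, and suppose \(Z\) is not connected in codimension \(1\). Then we can write \(Z=Z_1\cup Z_2\) with \(Z_1,Z_2\) nonempty closed subsets. Each \(Z_i\) is a union of irreducible components of \(Z\), no component lies in both, and every pair of components \(X'\subseteq Z_1\), \(Y'\subseteq Z_2\) satisfies \(\operatorname{codim}_Z(X'\cap Y')\ge 2\). Set \(Y:=Z_1\cap Z_2\). Since \(Y\) is a finite union of the sets \(X'\cap Y'\), we get \(\operatorname{codim}_Z Y\ge 2\). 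We also need \(Y\neq Z_i\), which holds because each \(Z_i\) contains a whole irreducible component not contained in \(Y\).

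Let \(U=X\setminus Y\) with inclusion \(j\). On \(U\), the support of \(\mathcal F|_U\) is the disjoint union of the two closed sets \(Z_1\cap U\) and \(Z_2\cap U\). Set \(V_1:=X\setminus Z_2\) and \(V_2:=X\setminus Z_1\); these are open in \(X\) and cover \(U\), since \(U\cap Z_1\cap Z_2=\emptyset\). Define \(\mathcal G_i\) as the extension by zero of \(\mathcal F|_{V_i}\) to \(U\). This makes sense because on \(V_1\cap V_2\) the sheaf \(\mathcal F\) vanishes, as its support misses that set. We then have \(\mathcal F|_U=\mathcal G_1\oplus\mathcal G_2\): at a point of \(V_1\) the stalk of \(\mathcal G_2\) is zero, and symmetrically, so the sections split via this open cover with one summand zero on each piece. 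Applying \(j_*\) and Lemma~\ref{lemma} gives
\[
\mathcal F\cong j_*(\mathcal F|_U)=j_*\mathcal G_1\oplus j_*\mathcal G_2 .
\]
Each \(j_*\mathcal G_i\) is a direct summand of the coherent sheaf \(\mathcal F\), hence coherent. Each is nonzero, because it restricts to \(\mathcal F\) on \(V_i\cap U\), and that set meets \(Z_i\setminus Y\neq\emptyset\). So \(\mathcal F\) is decomposable. This proves the first assertion and the direction \(\Rightarrow\) of the equivalence.

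For \(\Leftarrow\), suppose \(Z\) is connected in codimension \(1\) and \(\mathcal F=\mathcal F_1\oplus\mathcal F_2\) with both summands nonzero. Let \(\eta\) be a generic point of an irreducible component of \(Z\). Then \(\operatorname{codim}_Z(\eta)=0\), so \(\mathcal F_\eta\) is indecomposable by hypothesis. Hence \(\eta\) lies in exactly one of \(\operatorname{Supp}\mathcal F_1\), \(\operatorname{Supp}\mathcal F_2\). Because supports are closed and \(Z=\operatorname{Supp}\mathcal F_1\cup\operatorname{Supp}\mathcal F_2\), each component of \(Z\) lies entirely in one \(\operatorname{Supp}\mathcal F_k\), and each \(\operatorname{Supp}\mathcal F_k\) is a union of components. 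Both are nonempty, so by the chain characterization of connectedness in codimension \(1\) there are components \(X'\subseteq\operatorname{Supp}\mathcal F_1\) and \(Y'\subseteq\operatorname{Supp}\mathcal F_2\) with \(\operatorname{codim}_Z(X'\cap Y')\le1\). Choose a point \(x\) of \(X'\cap Y'\) with \(\operatorname{codim}_Z(x)\le 1\), for instance a generic point of a component of \(X'\cap Y'\) of minimal codimension. Then \(\mathcal F_x=(\mathcal F_1)_x\oplus(\mathcal F_2)_x\), and both summands are nonzero since \(x\) lies in both supports. This contradicts the indecomposability of \(\mathcal F_x\). Hence \(\mathcal F\) is indecomposable.

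The main obstacle is bookkeeping in the first part rather than any deep step. We must verify that \(\operatorname{codim}_Z(Z_1\cap Z_2)\ge2\) genuinely follows from the componentwise condition for arbitrary, possibly non-catenary, Noetherian \(Z\). Here we use that codimension of a finite union is the minimum over its pieces, and we must take care that codimension is measured in \(Z\) and not in a component. We must also check that the splitting of \(\mathcal F|_U\) is a splitting of \(\mathcal O_U\)-modules; this holds because on each open set \(V_i\) one of the two summands is identically zero.
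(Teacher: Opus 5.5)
Your first part, and hence the direction \(\Rightarrow\), is correct and is essentially the paper's argument. You split \(\mathcal F|_U\) along the two pieces of the support away from a codimension-\(2\) set and extend back using Lemma~\ref{lemma}. Using the chain characterization and the cover \(V_1,V_2\), rather than replacing \(X\) by \(\operatorname{Supp}\mathcal F\), is only bookkeeping.

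The gap is in \(\Leftarrow\), at the sentence ``each \(\operatorname{Supp}\mathcal F_k\) is a union of components. Both are nonempty, so \dots\ there are components \(X'\subseteq\operatorname{Supp}\mathcal F_1\) and \(Y'\subseteq\operatorname{Supp}\mathcal F_2\).''

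What you have actually shown is only that each generic point of \(Z\) lies in exactly one of the two supports. That does not imply that a nonzero summand has a whole component of \(Z\) in its support. In fact your \(\Leftarrow\) argument never uses the \(S_2\) hypothesis, and without it the conclusion is false. On \(X=\mathbb A^2_k=\operatorname{Spec}k[x,y]\), take \(\mathcal F=\mathcal O_X\oplus\kappa\), where \(\kappa=\mathcal O_X/(x,y)\) is the skyscraper at the origin. Then \(Z=X\) is irreducible. Every stalk at a point of codimension \(\le 1\) is a field or a DVR, hence indecomposable. Yet \(\mathcal F\) decomposes, and the summand \(\kappa\) has support \(\{0\}\), which contains no component of \(Z\).

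This is exactly where \(S_2\) must enter, and \(S_1\) already suffices:
\begin{enumerate}
\item \(\mathcal F\) has no embedded associated points, so \(\operatorname{Ass}\mathcal F\) is the set of generic points of the irreducible components of \(Z\).
\item Since \(\mathcal F_k\neq 0\), we have \(\emptyset\neq\operatorname{Ass}\mathcal F_k\subseteq\operatorname{Ass}\mathcal F\).
\item \(\operatorname{Supp}\mathcal F_k\) is the closure of \(\operatorname{Ass}\mathcal F_k\), so it is a nonempty union of components of \(Z\).
\end{enumerate}
With this inserted, the rest of your chain argument is correct: find adjacent components lying in different supports, then take the generic point of a component of their intersection of codimension \(\le 1\).

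The paper proceeds differently. It sets \(Y=\operatorname{Supp}\mathcal F_1\cap\operatorname{Supp}\mathcal F_2\) and gets \(\operatorname{codim}Y\ge 2\) from the local hypothesis. It then uses Lemma~\ref{lemma} to rule out \(\operatorname{Supp}\mathcal F_1\subseteq Y\): otherwise the split inclusion \(\mathcal F_1\hookrightarrow\mathcal F\cong j_*(\mathcal F|_{X\setminus\operatorname{Supp}\mathcal F_1})\) would be zero. Finally it contradicts the definition of connectedness in codimension \(1\) directly.
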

\begin{proof}
	Replacing \(X\) by the closed subscheme defined by
	\(\operatorname{ann}\mathcal F\), we may assume that
	\(\Supp\mathcal F=X\).
Suppose first that \(\mathcal F\) is indecomposable. If \(X\) is not connected
in codimension \(1\), then there exists a closed subset \(Y\subseteq X\) with
\(\operatorname{codim}_X(Y)\ge 2\) such that
\(U:=X\setminus Y\)
is disconnected. Write
\[
U=U_1\sqcup U_2
\]
with \(U_1,U_2\) nonempty open and closed subsets of \(U\). Then
\[
\mathcal F|_U
\cong
\mathcal F|_{U_1}\oplus \mathcal F|_{U_2}.
\]
By Lemma~\ref{lemma},
\[
\mathcal{F} \cong j_* (\mathcal{F}|_U) \cong j_*(\mathcal{F}|_{U_1})
\oplus j_*(\mathcal{F}|_{U_2}),
\]
where \(j:U\hookrightarrow X\) is the inclusion. Since \(U_i\neq\varnothing\)
and \(U_i\subseteq \Supp\mathcal F\), both summands are nonzero. This
contradicts the indecomposability of \(\mathcal F\). Hence \(X\) is connected
in codimension \(1\).

Conversely, suppose that \(X=\Supp\mathcal F\) is connected in codimension
\(1\), and that \(\mathcal F_x\) is indecomposable for every point \(x\in X\)
with \(\operatorname{codim}_X(x)\le 1\).  

 Assume that
\[
\mathcal{F} = \mathcal{F}_1 \oplus \mathcal{F}_2
\]
with $\mathcal{F}_1, \mathcal{F}_2 \neq 0$. Let $S_i := \operatorname{Supp}\mathcal{F}_i$, then
\[
X=S_1\cup S_2.
\]
Since \(\mathcal F_x\) is indecomposable at every
point of codimension at most \(1\), the intersection
\[Y:=S_1\cap S_2\]
has codimension at least \(2\) in \(X\). Indeed, if \(x\in S_1\cap S_2\), then
both \((\mathcal F_1)_x\) and \((\mathcal F_2)_x\) are nonzero, so
\(\mathcal F_x\) is decomposable.

We claim that both \(S_1\setminus Y\) and \(S_2\setminus Y\) are nonempty.
For instance,   if $S_1 \setminus Y = \varnothing$, then $S_1 \subseteq S_2$, hence $Y = S_1$, which has codimension at least $2$ in \(X\). Applying Lemma~\ref{lemma} to \(\mathcal F\) gives 
\(\mathcal F \cong j_*(\mathcal F|_{X\setminus S_1}).\)
But the inclusion \(\mathcal F_1\hookrightarrow \mathcal F\) restricts to zero on
\(X\setminus S_1\). Under the above isomorphism, this forces
\(\mathcal F_1=0\), a contradiction.
Thus \(S_1\setminus Y\neq\varnothing\),
and similarly \(S_2\setminus Y\neq\varnothing\).

Therefore
\(X\setminus Y=(S_1\setminus Y)\sqcup(S_2\setminus Y)\) is disconnected. 
  Since
\(\operatorname{codim}_X(Y)\ge 2\), this contradicts the assumption that
\(X\) is connected in codimension \(1\). Hence \(\mathcal F\) is
indecomposable.
\end{proof}
We also formulate a graded version. In the usual nonnegatively
$\mathbb Z$-graded setting one may pass to \(\operatorname{Proj} A\), but the
associated sheaf on \(\operatorname{Proj} A\) does not see the part of a graded module
supported in \(V(A_+)\), where \(A_+\) is the irrelevant ideal.   Since we want a statement about indecomposability of
the graded module itself, we work directly on \(\Spec A\).

Let \(A=\bigoplus_{\gamma\in \Gamma}A_\gamma\) be a Noetherian
\(\Gamma\)-graded ring, also called a graded ring of type \(\Gamma\) in
\cite[A.I.1]{graded}, where \(\Gamma\) is a torsion-free abelian group. We call a \(\Gamma\)-graded \(A\)-submodule of \(A\) a \emph{homogeneous
	ideal}, also called a \emph{graded ideal}. The
torsion-free hypothesis ensures that associated primes of     graded modules are homogeneous, a fact used in the proof below. Without this
hypothesis, the assertion can fail: for example, if $A=k[x]$ is given the
\(\mathbb Z/2\mathbb Z\)-grading with \(\deg x=1\), then the ideal $ (x^2-1)$ is homogeneous, but its minimal primes $(x-1)$ and $(x+1)$ are not homogeneous. For background on graded rings and modules, see \cite{graded}.

We say that a closed subset of \(\Spec A\) is \textit{\(\Gamma\)-homogeneous} if it can be defined by a homogeneous ideal. A finitely generated \(\Gamma\)-graded \(A\)-module
\(M\) is called \textit{\(\Gamma\)-graded indecomposable} if it is indecomposable in the
category of \(\Gamma\)-graded \(A\)-modules with degree-zero homomorphisms; see \cite[A.1]{graded}.

\begin{theorem}\label{indecomposable_graded}
Let $A=\bigoplus_{\gamma\in \Gamma} A_\gamma$ be a Noetherian $\Gamma$-graded ring, where $\Gamma$ is a torsion-free abelian group, and let $M$ be a nonzero finitely generated $\Gamma$-graded $A$-module satisfying \(S_2\). If $M$ is $\Gamma$-graded indecomposable, then
\(\Supp (M)\) is connected in codimension \(1\).

Moreover, assume that for every homogeneous prime $\mathfrak p \in \operatorname{Supp}(M)$ with $\operatorname{codim}_{\operatorname{Supp}(M)}(\mathfrak p) \le 1$, the homogeneous localization $M_{(\mathfrak p)}$ is $\Gamma$-graded indecomposable. Then \[ M \text{ is $\Gamma$-graded indecomposable } \Longleftrightarrow \operatorname{Supp}(M) \text{ is connected in codimension } 1. \]
\end{theorem}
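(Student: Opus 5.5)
The plan is to mirror the proof of Theorem~\ref{sheaf_indecomposable}, working on \(X=\Spec A\) with \(M\) in place of \(\mathcal F\), while keeping track of gradings. The one new ingredient is that every closed subset we remove or split along must be \(\Gamma\)-homogeneous, so that the pieces produced are again \(\Gamma\)-graded. Replacing \(A\) by \(A/\ann M\), which is again \(\Gamma\)-graded since \(\ann M\) is a homogeneous ideal, we may assume \(\Supp M=\Spec A\). Because \(\Gamma\) is torsion free, the associated primes of \(M\) are homogeneous. In particular the minimal primes of \(A\) are homogeneous, so every irreducible component of \(\Supp M\) is \(\Gamma\)-homogeneous.

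\emph{Indecomposable implies connected.} Suppose \(\Supp M\) is not connected in codimension \(1\). Group the minimal primes into two nonempty classes \(\{\mathfrak p_i\}\) and \(\{\mathfrak q_j\}\) such that \(\operatorname{codim}(V(\mathfrak p_i)\cap V(\mathfrak q_j))\ge 2\) for all \(i,j\). Set \(\mathfrak a=\bigcap\mathfrak p_i\), \(\mathfrak b=\bigcap \mathfrak q_j\), and \(I=\mathfrak a+\mathfrak b\); all three are homogeneous, and \(V(I)\) has codimension \(\ge 2\). On \(U=X\setminus V(I)\), the sets \(V(\mathfrak a)\cap U\) and \(V(\mathfrak b)\cap U\) are disjoint, nonempty, open and closed. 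By Lemma~\ref{lemma},
\[
M\cong D_I(M)\cong \Gamma(U_1,\widetilde M)\oplus\Gamma(U_2,\widetilde M).
\]
To see that this splitting is graded, I will realize it through the idempotent of \(D_I(A)\) that is \(1\) on \(U_1\) and \(0\) on \(U_2\). Since \(D_I(A)=\varinjlim\Hom_A(I^n,A)\) with \(I\) homogeneous, \(D_I(A)\) and \(D_I(M)\) carry natural \(\Gamma\)-gradings. Alternatively, the idempotent can be described directly: for \(n\gg0\), the element \(a\mapsto a\) on \(\mathfrak a^n\) and \(0\) on \(\mathfrak b^n\) gives a well-defined homomorphism on \(\mathfrak a^n+\mathfrak b^n\), because \(\mathfrak a^n\cap\mathfrak b^n\) is nilpotent-supported, and this homomorphism is degree-zero. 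A degree-zero idempotent acting on \(M\cong D_I(M)\) splits \(M\) as a direct sum of graded submodules. Both summands are nonzero since their supports contain \(U_1\) and \(U_2\) respectively, contradicting graded indecomposability.

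\emph{Connected implies indecomposable.} Suppose \(M=M_1\oplus M_2\) as graded modules with both summands nonzero. Then \(S_i=\Supp M_i\) are homogeneous closed sets and \(Y=S_1\cap S_2\) is homogeneous. If some prime \(\mathfrak p\in Y\) has codimension \(\le 1\), then its homogeneous core \(\mathfrak p^*\) (the ideal generated by the homogeneous elements of \(\mathfrak p\)) is a prime, again lies in \(Y\) (both \(\Supp M_i\) are defined by homogeneous annihilators), and has codimension \(\le 1\), since \(\mathfrak p^*\subseteq \mathfrak p\) is at worst one step below along chains of homogeneous primes. Thus \(M_{(\mathfrak p^*)}=(M_1)_{(\mathfrak p^*)}\oplus(M_2)_{(\mathfrak p^*)}\) is a nontrivial graded decomposition, which contradicts the hypothesis. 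Hence \(\operatorname{codim}Y\ge 2\). The remainder of the argument, namely that \(S_i\setminus Y\neq\varnothing\) via Lemma~\ref{lemma} and that \(X\setminus Y\) is then disconnected, is copied verbatim from the proof of Theorem~\ref{sheaf_indecomposable}.

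The main obstacle is this codimension comparison between \(\mathfrak p\) and \(\mathfrak p^*\), and the check that \(\mathfrak p^*\) stays in \(Y\). For torsion-free \(\Gamma\) I would use the standard facts that \(\mathfrak p^*\) is prime and that \(\operatorname{ht}\mathfrak p-\operatorname{ht}\mathfrak p^*\le \operatorname{rank}\)-type bounds hold; more simply, one can reduce to minimal primes of \(Y\), which are homogeneous because \(Y\) is cut out by a homogeneous ideal. So if \(\operatorname{codim}Y\le1\), some minimal prime of \(Y\) is homogeneous of codimension \(\le1\), which avoids the issue entirely. The second delicate point is confirming that the isomorphism \(M\cong D_I(M)\) respects gradings, which follows from the graded version of the exact sequence \(0\to H^0_I(M)\to M\to D_I(M)\to H^1_I(M)\to0\).
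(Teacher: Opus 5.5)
Your proof is correct and follows essentially the same route as the paper. In the forward direction your $V(\mathfrak a)\cap V(\mathfrak b)$ is exactly the paper's homogeneous set $Y'=Z_1\cap Z_2$, and in the converse you, like the paper, pass to a homogeneous minimal prime of $T_1\cap T_2$. The differences are minor: you get gradedness of the splitting from a degree-zero idempotent of $D_I(A)$ rather than from the paper's \v{C}ech description of $\Gamma(S\setminus V(f_1,\dots,f_r),\widetilde M)$, and you show $T_i\setminus Y\neq\varnothing$ with Lemma~\ref{lemma} rather than via the absence of embedded primes. One justification should be tightened: the explicit idempotent is well defined because $\mathfrak a^n\cap\mathfrak b^n=0$ for $n\gg0$. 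This holds since $A/\ann M\hookrightarrow M^{\oplus r}$ has no embedded primes, so $\mathfrak a^n\cap\mathfrak b^n$ lies in every primary component of $0$; being nilpotent is not enough.
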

\begin{proof}
	Suppose first that \(M\) is \(\Gamma\)-graded indecomposable. Set
	\(S=\operatorname{Supp}(M)\), and assume that \(S\) is not connected in
	codimension \(1\). Then there exists a closed subset $Y \subset S$ with $\operatorname{codim}_S Y \ge 2$ such that $U := S \setminus Y$ is disconnected. Write
	\[
	U=U_1\sqcup U_2
	\]
	with \(U_1\) and \(U_2\) nonempty open and closed subsets of \(U\).
	
Let $Z_i := \overline{U_i}^{\,S}$  for $i=1,2$. Since \(U_i\) is closed in \(U\), we have
\[
Z_i\cap U
=
\overline{U_i}^{\,S}\cap U
=
\overline{U_i}^{\,U}
=
U_i.
\]
Thus \((Z_1\cap Z_2)\cap U=\emptyset\), and hence
\[
Z_1\cap Z_2\subseteq Y.
\]

Since \(\operatorname{codim}_S Y\ge 2\),   \(Y\) contains no minimal
prime of \(S\). Hence every minimal prime of \(S\) lies in exactly one of
\(U_1\) and \(U_2\). It follows that each \(Z_i\) is a union of irreducible
components of \(S\), and that \(S=Z_1\cup Z_2\).
The minimal primes of \(S=\operatorname{Supp}(M)\) are associated primes
of \(M\) which are homogeneous by \cite[Thm.~II.7.3]{graded}. Therefore \(Z_1\) and \(Z_2\) are
\(\Gamma\)-homogeneous closed subsets of \(S\).

Set $Y' := Z_1 \cap Z_2$. Then \(Y'\) is a \(\Gamma\)-homogeneous closed subset of \(S\), and $\operatorname{codim}_S Y' \ge 2$. Moreover
\[
S\setminus Y'
=
(Z_1\setminus Y')\sqcup (Z_2\setminus Y')
\]
is a disjoint union of two nonempty open and closed subsets.

Let \(j\colon S\setminus Y'\hookrightarrow S\)  
denote the inclusion.    By the
\(S_2\)-extension property, Lemma~\ref{lemma}, we have
\[
\widetilde{M} \cong j_*\bigl(\widetilde{M}|_{S \setminus Y'}\bigr)
\cong
j_*\bigl(\widetilde{M}|_{Z_1 \setminus Y'}\bigr)
\oplus
j_*\bigl(\widetilde{M}|_{Z_2 \setminus Y'}\bigr).
\]

Since \(Z_2\) is homogeneous closed in \(S\), the open subset
\(Z_1\setminus Y'=S\setminus Z_2\) may be written as
 $ S  \setminus V(I)$ with $I=(f_1,\dots,f_r)$ generated by homogeneous elements. Then
\[
\Gamma(Z_1 \setminus Y', \widetilde{M})
=
\ker\!\Bigl(\bigoplus_i M_{f_i}
\rightarrow
\bigoplus_{i,j} M_{f_i f_j}\Bigr).
\]
Since the \(f_i\) are homogeneous, the localizations \(M_{f_i}\) and
\(M_{f_i f_j}\) are naturally \(\Gamma\)-graded and the localization maps
are degree-preserving. Hence
\(\Gamma(Z_1\setminus Y',\widetilde M)\) is a \(\Gamma\)-graded
\(A\)-module (see \cite[p.~4]{graded}). The same argument applies to
\(\Gamma(Z_2\setminus Y',\widetilde M)\).  
Taking global sections,
\[
M \cong \Gamma(Z_1 \setminus Y', \widetilde{M})
\oplus
\Gamma(Z_2 \setminus Y', \widetilde{M})
\]
is a decomposition of $M$ into two nonzero $\Gamma$-graded $A$-modules, a contradiction.

Conversely, assume that \(S=\operatorname{Supp}(M)\) is connected in
codimension \(1\), and assume that for every homogeneous prime
\(\mathfrak p\in S\) with
\(\operatorname{codim}_S(\mathfrak p)\le 1\), the homogeneous localization
\(M_{(\mathfrak p)}\) is \(\Gamma\)-graded indecomposable.

Suppose that $M \cong M_1 \oplus M_2$ is a nontrivial $\Gamma$-graded  decomposition. Then each $M_i$ is $\Gamma$-graded, and by \cite[Thm.~II.7.3]{graded},   \(T_i:=\operatorname{Supp}(M_i)\) is a homogeneous closed subset of \(S\) and $S = T_1 \cup T_2$.

If $\operatorname{codim}_S(T_1 \cap T_2) \le 1$, then  since $T_1 \cap T_2$ is homogeneous closed, by \cite[Thm.~II.7.3]{graded}  there exists a homogeneous  prime $\mathfrak p \in T_1 \cap T_2$ with $\operatorname{codim}_S(\mathfrak p) \le 1$. Then
\(M_{(\mathfrak p)} \cong (M_1)_{(\mathfrak p)} \oplus (M_2)_{(\mathfrak p)}\)
is a nontrivial $\Gamma$-graded decomposition, contradicting the hypothesis.
Therefore $\operatorname{codim}_S(T_1 \cap T_2) \ge 2$. Set
\(Y:=T_1\cap T_2\). Since \(M\) is \(S_2\), it has no
embedded  primes. Hence, \(\Ass_A(M)=\Ass_A(M_1)\cup\Ass_A(M_2)=\operatorname{Min}_A(M).\) Since \(M_i\neq 0\), each \(T_i \) must contain some
minimal prime in \(S \). As \(\operatorname{codim}_S Y\ge 2\), no
minimal prime of \(S\) lies in \(Y\). Thus   
\(T_i\setminus Y\neq\emptyset\) for \(i=1,2\). Therefore
\[
S\setminus Y=(T_1\setminus Y)\sqcup(T_2\setminus Y)
\]
is disconnected, contradicting the connectedness of \(S\) in codimension \(1\).
\end{proof}

We now turn from connectedness to decomposition.  The next result gives the corresponding decomposition
statement: a coherent \(S_2\)-sheaf decomposes canonically according to the
connected components in codimension \(1\) of its support.
\begin{theorem}[Structure theorem for coherent \(S_2\)-sheaves]
	\label{structure_S_2 sheaf}
	Let \(X\) be a Noetherian scheme, and let \(\mathcal F\) be a nonzero coherent
	\(S_2\)-sheaf on \(X\). Let
	\[
	\operatorname{Supp}\mathcal F=S_1\cup\cdots\cup S_r
	\]
	be the decomposition of \(\operatorname{Supp}\mathcal F\) into its connected
	components in codimension \(1\). Then there is a decomposition
	\[
	\mathcal F\cong \bigoplus_{i=1}^r \mathcal F^{(i)},\qquad  \operatorname{Supp}\mathcal F^{(i)}=S_i\quad \text{for all } i,
	\]
	into coherent \(S_2\)-sheaves.
	This decomposition is canonical up to reordering.
	
Moreover, assume that \(\operatorname{Supp}\mathcal F\) is equidimensional and
that \(\mathcal F_x\) is indecomposable for every
\(x\in\operatorname{Supp}\mathcal F\) with
\(\operatorname{codim}_{\operatorname{Supp}\mathcal F}(x)\le 1\).
Then each \(\mathcal F^{(i)}\) is indecomposable. In particular, \(\mathcal F\)
admits a decomposition into indecomposable summands which is unique up to
reordering.
\end{theorem}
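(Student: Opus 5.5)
We will construct the decomposition using Lemma~\ref{lemma}, replacing \(X\) by the closed subscheme defined by \(\operatorname{ann}\mathcal F\) so that \(S:=\Supp\mathcal F=X\).

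\textbf{Construction.} The components \(S_i\) are unions of irreducible components of \(X\), so they are closed, and \(X=S_1\cup\cdots\cup S_r\). For \(i\neq j\), every point of \(S_i\cap S_j\) has codimension \(\ge 2\) in \(X\). Indeed, if some \(x\in S_i\cap S_j\) had \(\operatorname{codim}_X(x)\le 1\), choose irreducible components \(X'\subseteq S_i\) and \(X''\subseteq S_j\) through \(x\). Then \(\operatorname{codim}_X(X'\cap X'')\le 1\), which would link the two classes. Hence \(Y:=\bigcup_{i<j}(S_i\cap S_j)\) is closed with \(\operatorname{codim}_X Y\ge 2\). Set \(U:=X\setminus Y\) and \(U_i:=S_i\setminus Y\). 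Each \(U_i\) is closed in \(U\), the \(U_i\) are pairwise disjoint, and they cover \(U\), so each is also open in \(U\). Each \(U_i\) is nonempty, because \(Y\) contains no generic point of \(X\). Therefore \(\mathcal F|_U\cong\bigoplus_i \mathcal F|_{U_i}\) (extension by zero). Lemma~\ref{lemma} then gives
\[
\mathcal F\cong j_*(\mathcal F|_U)\cong\bigoplus_i \mathcal F^{(i)},\qquad \mathcal F^{(i)}:=j_*(\mathcal F|_{U_i}).
\]

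\textbf{Properties of the summands.} Each \(\mathcal F^{(i)}\) is a direct summand of a coherent \(S_2\)-sheaf. It is therefore coherent, and it is \(S_2\) because depth of a direct summand is at least the depth of the sum and the dimensions match. Its support is closed and contains \(U_i\), hence contains \(\overline{U_i}=S_i\). Conversely, at a point \(x\notin S_i\), the sheaf \(j_*(\mathcal F|_{U_i})\) is computed on a neighbourhood \(V\) of \(x\) disjoint from \(S_i\), so \(V\cap U_i=\varnothing\) and the stalk vanishes. Thus \(\Supp\mathcal F^{(i)}=S_i\).

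\textbf{Canonicity.} Suppose \(\mathcal F\cong\bigoplus\mathcal G^{(i)}\) is another decomposition into sheaves with \(\Supp\mathcal G^{(i)}=S_i\). Restricting to \(U\), where the \(S_i\) become disjoint open pieces, the summand \(\mathcal G^{(i)}|_U\) must coincide with the subsheaf of \(\mathcal F|_U\) supported on \(U_i\), namely \(\mathcal F|_{U_i}\). Since \(\mathcal G^{(i)}\) is a summand of an \(S_2\)-sheaf, it is \(S_2\). Applying Lemma~\ref{lemma} to it gives \(\mathcal G^{(i)}=j_*(\mathcal G^{(i)}|_U)=\mathcal F^{(i)}\), as subsheaves of \(\mathcal F\) (both identified inside \(j_*(\mathcal F|_U)\)).

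\textbf{Indecomposability.} Apply Theorem~\ref{sheaf_indecomposable} to each \(\mathcal F^{(i)}\) on \(S_i\). By construction, \(S_i\) is connected in codimension \(1\); this is immediate from the chain criterion. It remains to verify the stalk hypothesis, and this is the main obstacle: codimension is measured in \(S_i\) rather than in \(X\). Let \(x\in S_i\) with \(\operatorname{codim}_{S_i}(x)\le 1\). Because \(X\) is equidimensional, \(\operatorname{codim}_{S_i}(x)=\operatorname{codim}_X(x)\): in both spaces this is the minimum over components \(Z\) through \(x\) of \(\dim\mathcal O_{Z,x}\), and every component of \(X\) through \(x\) lies in \(S_i\), since otherwise \(x\in S_i\cap S_j\) would force codimension \(\ge2\). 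Consequently \(x\notin S_j\) for \(j\neq i\), so \(\mathcal F^{(i)}_x=\mathcal F_x\), which is indecomposable by hypothesis. Theorem~\ref{sheaf_indecomposable} now shows that \(\mathcal F^{(i)}\) is indecomposable.

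\textbf{Uniqueness.} Take any decomposition \(\mathcal F\cong\bigoplus_k \mathcal H_k\) into nonzero indecomposable summands. Each \(\mathcal H_k\) is \(S_2\), so its support is connected in codimension \(1\). Two supports can only meet in codimension \(\ge 2\), by the stalk argument used in Theorem~\ref{sheaf_indecomposable}. Hence the supports are unions of codimension-\(1\) components. Since each \(\mathcal H_k\) is indecomposable and has connected support, each support is exactly one \(S_i\). Grouping the \(\mathcal H_k\) by support yields a decomposition of the form just treated, so by canonicity \(\bigoplus_{\Supp\mathcal H_k=S_i}\mathcal H_k=\mathcal F^{(i)}\). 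As \(\mathcal F^{(i)}\) is indecomposable, exactly one \(\mathcal H_k\) corresponds to each \(i\), which gives uniqueness up to reordering.
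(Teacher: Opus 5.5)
Your proposal is correct and follows essentially the same route as the paper. Both excise \(Y=\bigcup_{i\neq j}(S_i\cap S_j)\), split \(\mathcal F|_U\) along the open-and-closed pieces \(U_i\), extend back via Lemma~\ref{lemma}, and get indecomposability and uniqueness from Theorem~\ref{sheaf_indecomposable} plus the extension property. The only difference is organizational: you first show that any decomposition with supports \(S_i\) agrees with \(\bigoplus_i\mathcal F^{(i)}\) and deduce uniqueness from that, whereas the paper compares the grouped summands \(\mathcal H_i\) with \(\mathcal F^{(i)}\) directly on \(U_i\). In either version, the extension step for a summand \(\mathcal G\) is cleanest if you note that \(\mathcal G\to j_*(\mathcal G|_U)\) is a direct summand of the isomorphism \(\mathcal F\to j_*(\mathcal F|_U)\); this avoids checking Lemma~\ref{lemma}'s codimension hypothesis inside \(S_i\).
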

\begin{proof}
	Replacing \(X\) by the closed subscheme defined by
	\(\operatorname{ann} \mathcal F\), we may assume that
	\(\operatorname{Supp}\mathcal F=X\).
	Since distinct  connected components in codimension $1$  meet in codimension at least $2$, the set
	\[
	Y := \bigcup_{i \neq j} (S_i \cap S_j)
	\]
	is a closed subset of codimension at least $2$ in   \(X\).  
	
	Set $U:=X\setminus Y$. Then
	\[
	U = U_1 \sqcup \cdots \sqcup U_r, \qquad U_i := S_i \setminus Y,
	\]
	where each $U_i$ is open and closed in $U$.   Hence
	\[
	\mathcal{F}|_U \cong \bigoplus_{i=1}^r  \mathcal F|_{U_i} .
	\]
	Let $j \colon U \hookrightarrow X$ and $j_i: U_i\hookrightarrow X$ denote the inclusions and define
	\(\mathcal{F}^{(i)} := (j_i)_*\bigl(\mathcal{F}|_{U_i}\bigr).\)
    By the \(S_2\)-extension property, Lemma~\ref{lemma}, we obtain
	\[
	\mathcal{F} \cong j_*(\mathcal{F}|_U) \cong \bigoplus_{i=1}^r \mathcal{F}^{(i)}.
	\]
	Moreover, $\operatorname{Supp}\mathcal{F}^{(i)} = S_i$, and each \(\mathcal{F}^{(i)}\) is coherent and \(S_2\), as it
	is a direct summand of \(\mathcal F\). 
Since each \(U_i\) is determined by \(S_i\), the summands
\(\mathcal F^{(i)}\) are uniquely determined by \(\mathcal F\) and the
  components \(S_i\). Thus the decomposition is canonical up
to reordering.

Now assume that \(\operatorname{Supp}\mathcal F=X\) is equidimensional and that
\(\mathcal F_x\) is indecomposable for every \(x\in X\) with
\(\operatorname{codim}_X(x)\le 1\). Let \(x\in S_i\) satisfy
\(\operatorname{codim}_{S_i}(x)\le 1\). Since \(X\) is equidimensional, we have
\(\operatorname{codim}_X(x)=\operatorname{codim}_{S_i}(x)\le 1.\)
It follows that  \(\mathcal F^{(i)}_x\cong \mathcal F_x\) is indecomposable. 
Thus \(\mathcal F^{(i)}\) satisfies the codimension-\(1\) local
indecomposability hypothesis on \(S_i\).
 Since \(S_i\) is connected in
codimension \(1\), Theorem~\ref{sheaf_indecomposable} implies that
\(\mathcal F^{(i)}\) is indecomposable.

It remains to show that this decomposition is the unique decomposition into
indecomposable summands, up to reordering. Let
\[
\mathcal{F} \cong \bigoplus_{j=1}^s \mathcal{G}_j
\]
be any decomposition into indecomposable coherent sheaves. Since each \(\mathcal{G}_j\) is a direct summand of the \(S_2\)-sheaf
\(\mathcal F\), it is again \(S_2\). In particular, it has no embedded
associated points. Hence 
\(\operatorname{Supp}\mathcal{G}_j\) is a union of irreducible components of
\(X\). Since \(\mathcal G_j\) is indecomposable,
Theorem~\ref{sheaf_indecomposable} implies that
\(\operatorname{Supp}\mathcal G_j\) is connected in codimension \(1\). Hence
\(\operatorname{Supp}\mathcal G_j\subseteq S_i\)
for some \(i\).

For each $i$, set
\[
\mathcal{H}_i := \bigoplus_{\operatorname{Supp}\mathcal{G}_j \subseteq S_i} \mathcal{G}_j.
\]
Then $\operatorname{Supp}\mathcal{H}_i \subseteq S_i$, and for $k \neq i$ we have $\mathcal{H}_k|_{U_i} = 0$. Hence
\[
\mathcal{F}|_{U_i} \cong \mathcal{H}_i|_{U_i} \cong \mathcal{F}^{(i)}|_{U_i}.
\]
Since $S_i \setminus U_i$ has codimension at least $2$ in $S_i$, the extension property (Lemma~\ref{lemma}) implies that
\[\mathcal{H}_i \cong \mathcal{F}^{(i)}.\]
As each $\mathcal{F}^{(i)}$ is indecomposable, it follows that $\mathcal{H}_i$ contains exactly one nonzero summand. Therefore the decomposition is unique up to permutation.
\end{proof}
\begin{remark}
	The equidimensionality assumption ensures that codimension-one points of each
	\(S_i\) are codimension-one points of \(\operatorname{Supp}\mathcal F\).
	Without this assumption, indecomposability of the summands would require a
	modified componentwise local indecomposability condition.
\end{remark}

\begin{theorem}[Structure theorem for graded \(S_2\)-modules]\label{structure_theorem_graded}
	Let \(A=\bigoplus_{\gamma\in\Gamma}A_\gamma\) be a Noetherian
	\(\Gamma\)-graded ring, where \(\Gamma\) is a torsion-free abelian group, and
	let \(M\) be a nonzero finitely generated \(\Gamma\)-graded \(A\)-module
	satisfying \(S_2\). Let
	\[
	\Supp (M)=S_1\cup\cdots\cup S_r
	\]
	be the decomposition of \(\Supp (M)\) into its connected components in
	codimension \(1\). Then there exists a decomposition
	\[
	M\cong \bigoplus_{i=1}^r M^{(i)}, \qquad  \Supp(M^{(i)})=S_i \quad \text{for all } i,
	\]
	into finitely generated \(\Gamma\)-graded \(S_2\)-modules. 
	This decomposition is canonical up to reordering.

	Moreover, assume that \(\Supp(M)\) is equidimensional and that  for every
	homogeneous prime \(	\mathfrak p\in \Supp (M)\) with
	\(	\operatorname{codim}_{\Supp (M)}(\mathfrak p)\le 1\),
	the homogeneous localization $M_{(\mathfrak p)}$ is $\Gamma$-graded indecomposable. Then
	each $M^{(i)}$ is $\Gamma$-graded  indecomposable. In particular, \(M\) admits a decomposition
	into \(\Gamma\)-graded indecomposable summands which is unique up to reordering.
\end{theorem}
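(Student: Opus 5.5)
The plan is to run the proof of Theorem~\ref{structure_S_2 sheaf} again, now using the graded techniques from the proof of Theorem~\ref{indecomposable_graded}. First we replace \(A\) by \(A/\operatorname{ann}M\). The annihilator of a graded module is homogeneous, so the quotient is still \(\Gamma\)-graded, and we may assume \(\Supp(M)=\Spec A=:S\). The minimal primes of \(S\) are associated primes of \(M\), so they are homogeneous by \cite[Thm.~II.7.3]{graded}. Hence each \(S_i\), being a union of irreducible components, is a \(\Gamma\)-homogeneous closed subset. Set
\[
Y:=\bigcup_{i\neq j}(S_i\cap S_j).
\]
This is a homogeneous closed subset, and it has codimension at least \(2\) in \(S\), because distinct connected components in codimension \(1\) meet in codimension \(\ge 2\). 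Then \(U:=S\setminus Y\) is the disjoint union of the sets \(U_i:=S_i\setminus Y\), each open and closed in \(U\). Moreover \(U_i=S\setminus Z_i'\), where \(Z_i':=\bigcup_{j\neq i}S_j\) is homogeneous closed.

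Next we apply Lemma~\ref{lemma}, exactly as in the proof of Theorem~\ref{indecomposable_graded}. It gives
\[
M\cong\Gamma(U,\widetilde M)\cong\bigoplus_i M^{(i)},\qquad M^{(i)}:=\Gamma(U_i,\widetilde M).
\]
Write \(Z_i'=V(f_1,\dots,f_s)\) with the \(f_k\) homogeneous. Then \(M^{(i)}\) is the kernel of \(\bigoplus_k M_{f_k}\to\bigoplus_{k,l}M_{f_kf_l}\), a map of \(\Gamma\)-graded modules, so \(M^{(i)}\) is \(\Gamma\)-graded. The restriction maps are degree-preserving, so the isomorphism is one of graded modules. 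Each \(M^{(i)}\) is a direct summand of \(M\), hence finitely generated and \(S_2\). Its support is the closure of \(U_i\), which is \(S_i\) since \(Y\) contains no minimal prime. Canonicity holds because \(U_i\) is determined by the \(S_i\).

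For the second part, let \(\mathfrak p\in S_i\) be homogeneous with \(\operatorname{codim}_{S_i}\mathfrak p\le1\). By equidimensionality \(\operatorname{codim}_S\mathfrak p\le 1\), so \(\mathfrak p\notin S_j\) for \(j\ne i\); otherwise \(\mathfrak p\in Y\), which has codimension \(\ge2\). Hence \(M^{(j)}_{(\mathfrak p)}=0\) for \(j\neq i\), and therefore \(M^{(i)}_{(\mathfrak p)}\cong M_{(\mathfrak p)}\) is graded indecomposable. Since \(S_i\) is connected in codimension \(1\), Theorem~\ref{indecomposable_graded} shows that \(M^{(i)}\) is \(\Gamma\)-graded indecomposable.

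For uniqueness, let \(M\cong\bigoplus_j G_j\) be any graded decomposition into graded indecomposables. Each \(G_j\) is \(S_2\), so it has no embedded primes, and its support is a union of components. By the first part of Theorem~\ref{indecomposable_graded} that support is connected in codimension \(1\), so it lies in a single \(S_i\). Group the summands as \(H_i:=\bigoplus_{\Supp G_j\subseteq S_i}G_j\). Then \(M=\bigoplus_i H_i\) is a graded decomposition with \(\Supp H_i\subseteq S_i\). Restricting to \(U_i\) gives \(\widetilde H_i|_{U_i}=\widetilde M|_{U_i}\), so Lemma~\ref{lemma} yields \(H_i\cong \Gamma(U_i,\widetilde M)=M^{(i)}\) as graded modules; the identification is compatible with the grading because both are computed as graded sections. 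Since \(M^{(i)}\) is graded indecomposable, \(H_i\) has exactly one summand.

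The main obstacle is keeping every identification inside the graded category. The ungraded isomorphisms are already given by the sheaf argument, so what must be checked is the following: the sets involved are homogeneous, which relies on the torsion-free hypothesis through \cite[Thm.~II.7.3]{graded}, and the section modules and the extension isomorphism of Lemma~\ref{lemma} are degree-preserving. The latter holds because \(M\to D_I(M)\) for homogeneous \(I\) is a graded map.
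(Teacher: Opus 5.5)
Your proposal is correct and follows essentially the same route as the paper. Both arguments use the same steps: homogeneity of the \(S_i\) via \cite[Thm.~II.7.3]{graded}; the \(S_2\)-extension Lemma~\ref{lemma} across \(Y=\bigcup_{i\ne j}(S_i\cap S_j)\); gradedness of \(\Gamma(U_i,\widetilde M)\) via homogeneous \v{C}ech localizations; Theorem~\ref{indecomposable_graded} for indecomposability; and the grouping argument of Theorem~\ref{structure_S_2 sheaf} for uniqueness. The paper leaves implicit the details you spell out: that a homogeneous \(\mathfrak p\) of codimension \(\le 1\) lies in no other \(S_j\), and that the extension isomorphism is degree-preserving. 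Your justifications for both are sound.
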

\begin{proof}
	Set \(S:=\Supp (M)\), and let
	\[
	S=S_1\cup\cdots\cup S_r
	\]
	be its decomposition into connected components in codimension \(1\). The minimal primes in $S$ are the associated primes of $M$, which are homogeneous by \cite[Thm.~II.7.3]{graded}. Hence each
	\(S_i\), being a union of irreducible components of \(S\), is a homogeneous
	closed subset of \(S\).
	
	Set
	\[
	Y:=\bigcup_{i\ne j}(S_i\cap S_j),
	\qquad
	U:=S\setminus Y,
	\qquad
	U_i:=S_i\setminus Y.
	\]
	Then \(Y\) is homogeneous closed of codimension at least \(2\) in \(S\), and
	\[
	U=U_1\sqcup\cdots\sqcup U_r
	\]
	with each \(U_i\) open and closed in \(U\).
	
	By the \(S_2\)-extension property, 
	\[
	\widetilde M
	\cong
	j_*(\widetilde M|_U)
	\cong
	\bigoplus_{i=1}^r (j_i)_*(\widetilde M|_{U_i}),
	\]
	where \(j:U\hookrightarrow S\) and \(j_i: U_i\hookrightarrow S\) are the inclusions. Taking global sections gives
	\[
	M\cong \bigoplus_{i=1}^r M^{(i)},
	\qquad
	M^{(i)}:=\Gamma(U_i,\widetilde M).
	\]
As in the proof of Theorem~\ref{indecomposable_graded}, the fact that $S\setminus U_i=\bigcup_{j\ne i}S_j$ is   homogeneous closed in $S$ 
  implies that each \(M^{(i)}\) is 
\(\Gamma\)-graded. Since each \(M^{(i)}\) is a direct summand of \(M\), it is finitely generated
and \(S_2\). Moreover, by construction, \(\Supp_A(M^{(i)})=S_i\).
The construction depends only on the components \(S_i\), so the decomposition
is canonical up to reordering.
	
Now assume that \(S\) is equidimensional and that the stated local indecomposability condition holds. Let \(\mathfrak p\in S_i\) be a homogeneous
prime with \(\operatorname{codim}_{S_i}(\mathfrak p)\le 1\). Since \(S\) is
equidimensional, we have
\(\operatorname{codim}_{S}(\mathfrak p)
=
\operatorname{codim}_{S_i}(\mathfrak p)
\le 1 \). Hence 
\((M^{(i)})_{(\mathfrak p)}\cong M_{(\mathfrak p)} \) is \(\Gamma\)-graded indecomposable.
 Since \(\Supp_A(M^{(i)})=S_i\) is connected in
 codimension \(1\), Theorem~\ref{indecomposable_graded} implies that
 \(M^{(i)}\) is \(\Gamma\)-graded indecomposable.

	Finally, the uniqueness of the decomposition into indecomposable summands is
	proved exactly as in Theorem~\ref{structure_S_2 sheaf}, using
	Theorem~\ref{indecomposable_graded} in place of Theorem~\ref{sheaf_indecomposable}.
\end{proof}

Let \(A\) be a local ring admitting a canonical module \(\omega_A\), the ring 
\(B:=\operatorname{End}_A(\omega_A)\)
is the \(S_2\)-ification of \(A\). In the complete local case, \(B\) decomposes as a finite product of complete local rings, inducing a corresponding decomposition of \(\omega_A\); see
\cite[Thm.~3.2(4)]{aoyama} and \cite[(2.2\,k)]{hochster}. Sather-Wagstaff and
Spiroff \cite{counting} used the Hochster--Huneke graph to give a combinatorial
count of these factors: in the complete equidimensional local case, the number
of factors of the \(S_2\)-ification agrees with the number of connected
components in codimension \(1\) of \(\Spec(A/J(A))\).

The following theorem shows that this decomposition phenomenon is not
particular to the complete local case but a consequence of the decomposition of $S_2$ objects. Since both \(\omega_A\) and \(B\) are
finite \(S_2\)-modules with support \(\operatorname{Spec}(A/J(A))\), the
structure theorem gives canonical decompositions of these objects, determined
intrinsically by the codimension-\(1\) connected components of
\(\operatorname{Spec}(A/J(A))\).
\begin{theorem}\label{decomposition_canonical_S_2 ification}
 Let $A$ be a local ring that is a homomorphic image of a Gorenstein local ring. Let $(0) = \bigcap_{p \in \Ass A} Q(p)$ be a primary decomposition of $0$ in $A$ so that $J(A) = \bigcap_{p \in \Assh A} Q(p)$.  Let
 \[
 \Spec(A/J(A))=S_1\cup\cdots\cup S_r
 \]
 be the decomposition into connected components in codimension \(1\).

 For each \(i\), let \(T_i:=S_i\cap \Assh(A)\). Define
 \[
 J_i := \bigcap_{p \in T_i} Q(p), \qquad A_i := A/J_i.
 \]
	\begin{enumerate}
		\item[(a)] The canonical module $\omega_A$ admits a decomposition
	\begin{equation}\label{decomposition_omega_A}
			\omega_A \cong \omega_1 \oplus \cdots \oplus \omega_r,\qquad \Supp_A(\omega_i)=S_i \quad \text{for all } i.
	\end{equation} 
		Each $\omega_i$ is indecomposable, and this gives the unique decomposition of $\omega_A$ into indecomposable $A$-modules, up to permutation.
		
		\item[(b)] Let $B := \operatorname{End}_A(\omega_A)$. Then   
		\[
		B \cong \prod_{i=1}^r B_i, \qquad B_i := \operatorname{End}_A(\omega_i),\qquad \Supp_A(B_i)=S_i \quad \text{for all } i.
		\]
		Each \(B_i\) is connected, equivalently indecomposable as a ring, and is also
		indecomposable as an \(A\)-module. This is the unique decomposition of \(B\) as
		a product of connected rings, or as a direct sum of indecomposable
		\(A\)-modules, up to permutation.
		
	\item[(c)] Each $\omega_i$ is the canonical module of $A_i$,   and $B_i \cong \End_A(\omega_i)$ is the $S_2$-ification of $A_i$. Let $C$ be the cokernel of the natural injection
	\(\alpha \colon A/J(A) \to B,\)
	and let $C_i$ be the cokernel of the natural injection
\(	\alpha_i \colon A_i \to B_i.\)
	Then $\alpha$ factors through $\bigoplus_i \alpha_i$, and we have an exact sequence
	\[
	0 \to A/J(A) \to \bigoplus_{i=1}^r A_i \to C \to \bigoplus_{i=1}^r C_i \to 0.
	\]
		\item[(d)] Each $\omega_i$ is the canonical module of $B_i$.
		
		\item[(e)] If $A$ is complete local, then each $B_i$ is local.
	\end{enumerate}
\end{theorem}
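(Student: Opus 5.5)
We apply Theorem~\ref{structure_S_2 sheaf} on \(X=\Spec A\), first to \(\mathcal F=\widetilde{\omega_A}\) and then to \(\widetilde B\), and afterwards identify the summands.

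\emph{Part (a).} By Proposition~\ref{prop_canonical_module}(b), \(\omega_A\) is \(S_2\) with \(\Ass\omega_A=\Assh A\). Hence \(\Supp\omega_A=\Spec(A/J(A))\), which is equidimensional. The structure theorem therefore yields a canonical decomposition \(\omega_A\cong\bigoplus\omega_i\) with \(\Supp\omega_i=S_i\).

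For indecomposability and uniqueness we must verify the local hypothesis: if \(\mathfrak p\) has codimension \(\le1\) in \(\Supp\omega_A\), then \((\omega_A)_{\mathfrak p}\cong\omega_{A_{\mathfrak p}}\) (up to the usual localization of canonical modules) is indecomposable. Here \(\overline{A_{\mathfrak p}}=A_{\mathfrak p}/J(A)_{\mathfrak p}\) has dimension \(\le1\) and is unmixed, so it is Cohen--Macaulay. For dimension \(0\), the canonical module is \(E(k)\), which is indecomposable. For dimension \(1\), one uses that \(\Hom(\omega,\omega)=\overline{A_{\mathfrak p}}\) is local for a CM ring. An idempotent endomorphism of \(\omega\) is then an idempotent of a local ring, hence \(0\) or \(1\). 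Thus the hypothesis holds.

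\emph{Part (b).} Additivity gives \(B=\End(\bigoplus\omega_i)=\bigoplus_{i,j}\Hom(\omega_i,\omega_j)\). For \(i\neq j\), the module \(\Hom(\omega_i,\omega_j)\) has associated primes inside \(\Ass\omega_j\cap\Supp\omega_i\subseteq S_i\cap S_j\), which consists of non-minimal primes. Since \(\Ass\omega_j\) consists of minimal primes, this set is empty, so \(\Hom(\omega_i,\omega_j)=0\). Hence \(B=\prod B_i\) as rings.

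Each \(B_i\) is \(S_2\), being a summand of the \(S_2\)-module \(B\), and \(\Supp B_i=S_i\). Connectedness and \(A\)-indecomposability follow from Theorem~\ref{sheaf_indecomposable} once the local hypothesis at codimension \(\le1\) is checked. At such a prime, \(B_{\mathfrak p}=\End(\omega_{\mathfrak p})=\overline{A_{\mathfrak p}}\) is local, hence indecomposable as a module. A ring idempotent is an \(A\)-module splitting, so connectedness of \(B_i\) is included. Uniqueness comes from the same theorem.

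\emph{Part (c).} Note that \(A_i\) is unmixed with \(\Ass A_i=T_i\). We have \(\omega_{A_i}=\Hom_A(A_i,\omega_A)\), namely the submodule of \(\omega_A\) annihilated by \(J_i\). We will show that this submodule equals \(\omega_i\). First, \(J_i\) annihilates \(\omega_i\): the support of \(J_i\omega_i\) misses the minimal primes of \(S_i\) (localizing there, \(J_i\) agrees with \(J(A)\)), and \(\omega_i\) has no embedded primes, so \(J_i\omega_i=0\). Second, an element killed by \(J_i\) has zero components in \(\omega_j\) for \(j\neq i\), by the same associated-prime argument, since \(J_i\not\subseteq p\) for \(p\in T_j\).

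Hence \(\End_{A_i}(\omega_i)=B_i\) is the \(S_2\)-ification of \(A_i\). The map \(\alpha\) factors as \(A/J(A)\to\prod A_i\to\prod B_i\), and the four-term exact sequence follows from the snake lemma applied to the composition.

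\emph{Part (d).} Since \(B_i\) is module-finite over \(A_i\) with the same dimension, \(K_{B_i}=\Hom_{A_i}(B_i,\omega_i)\). This equals \(\Hom(\End\omega_i,\omega_i)\cong\omega_i\) by the standard duality for \(S_2\) modules over their support: both sides are \(S_2\), and they agree in codimension \(\le1\), where the situation is CM. Then Lemma~\ref{lemma} finishes the argument.

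\emph{Part (e).} A module-finite algebra over a complete local ring is a product of local rings. Since \(B_i\) is connected, it is local.

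The main obstacle I expect is the codimension-\(\le1\) local indecomposability check together with the identification in (c)--(d). These require careful bookkeeping of localization of canonical modules and of \(J(A)_{\mathfrak p}\) versus \(J(A_{\mathfrak p})\).
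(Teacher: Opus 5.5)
Your proposal is correct. For (a), (b) and (e) it is essentially the paper's proof. It applies the structure theorem to \(\omega_A\), with the local check that \(\End((\omega_A)_{\mathfrak p})\cong\overline{A}_{\mathfrak p}\) is local at points of codimension \(\le 1\). It gets \(\Hom_A(\omega_i,\omega_j)=0\) for \(i\neq j\) from \(\Ass\Hom_A(M,N)=\Supp M\cap\Ass N\). It then splits a module-finite algebra over a complete local ring into local factors; you apply this to each \(B_i\) directly, the paper to \(B\) plus uniqueness.

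One small point in (b): the hypothesis of Theorem~\ref{sheaf_indecomposable} concerns \((B_i)_{\mathfrak p}\), not \(B_{\mathfrak p}\). They agree because a codimension-\(\le 1\) point of \(S_i\) has codimension \(\le 1\) in the equidimensional \(\Spec(A/J(A))\), so it lies on no other \(S_j\). Alternatively, \((B_i)_{\mathfrak p}\) is a nonzero summand of the indecomposable \(B_{\mathfrak p}\).

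The genuine differences are in (c) and (d).

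In (c), the paper embeds \(A/J(A)\) diagonally in \(\bigoplus_i A_i\) and notes the cokernel has dimension \(\le\dim A-2\). From this it gets \(\omega_{A/J(A)}\cong\bigoplus_i\omega_{A_i}\), and it matches \(\omega_i\cong\omega_{A_i}\) using the uniqueness in (a) and the supports. You instead compute \(\omega_{A_i}\cong\Hom_A(A_i,\omega_A)=(0:_{\omega_A}J_i)\) and show by associated primes that this submodule is exactly the summand \(\omega_i\). Your route gives an equality of submodules of \(\omega_A\), hence \(\ann_A\omega_i=J_i\) at once, and it avoids the uniqueness of indecomposable decompositions.

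In (d), the paper uses \(\Hom_A(\omega_i,\omega_j)=0\) to write \(K_{B_i}\cong\Hom_A(\Hom_A(\omega_i,\omega_A),\omega_A)=K_{K_{\omega_i}}\), and then cites Schenzel's biduality. You argue via Lemma~\ref{lemma} instead, which can work, but as written it is incomplete. Two \(S_2\)-modules with the same support that are abstractly isomorphic at every point of codimension \(\le 1\) need not be isomorphic; a non-principal divisorial ideal of a normal local domain versus the ring itself is an example. So you need a global comparison map.

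Take the evaluation map \(\epsilon\colon\omega_i\to\Hom_A(B_i,\omega_i)\), \(x\mapsto(f\mapsto f(x))\), which is split by \(\phi\mapsto\phi(\mathrm{id})\). At points of codimension \(\le 1\) we have \((B_i)_{\mathfrak p}=\overline{A}_{\mathfrak p}\), so \(\epsilon_{\mathfrak p}\) is an isomorphism. Hence \(\operatorname{coker}\epsilon\) is a direct summand of \(\Hom_A(B_i,\omega_i)\) supported in codimension \(\ge 2\). Since \(\Ass\Hom_A(B_i,\omega_i)\subseteq\Ass\omega_i\) consists of minimal primes, \(\operatorname{coker}\epsilon=0\). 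With this fix your (d) is self-contained, whereas the paper's version is a one-line reduction to a known biduality.
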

\begin{proof}
	(a) Since \(\omega_A\) is a finitely generated equidimensional \(S_2\)
	\(A\)-module and
	\[
	\operatorname{Supp}_A(\omega_A)=\operatorname{Spec}(A/J(A)),
	\]
	Theorem~\ref{structure_S_2 sheaf} yields the desired decomposition \eqref{decomposition_omega_A}.

Let
\(\overline A:=A/J(A)\). For every prime
\(\mathfrak p\in\operatorname{Spec}\overline A\) with
\(\operatorname{codim}_{\operatorname{Spec}\overline A}(\mathfrak p)\le 1\),
the natural map
\[
\overline A_{\mathfrak p}
\longrightarrow
\operatorname{End}_{A_{\mathfrak p}}\bigl((\omega_A)_{\mathfrak p}\bigr)
\]
is an isomorphism.
Since \(\overline A_{\mathfrak p}\) is
indecomposable as an \(A_{\mathfrak p}\)-module, it follows that $(\omega_A)_{\mathfrak{p}}$ is indecomposable as an $A_{\mathfrak{p}}$-module. 
 Hence the
 second part of Theorem~\ref{structure_S_2 sheaf} applies, so each
 \(\omega_i\) is indecomposable and the decomposition is unique up to
 permutation.
  
  (b)  For \(i\neq j\), we have
  \(  \Ass_A\left(\Hom_A(\omega_i,\omega_j)\right)
  =
  \Supp_A(\omega_i)\cap \Ass_A(\omega_j)=\emptyset,\)
  because \(\Supp_A(\omega_i)=S_i\), while \(\omega_j\) is unmixed thus its
  associated primes are the minimal primes of \(S_j\), none of which lies in
  \(S_i\).
  Hence \[
  \Hom_A(\omega_i,\omega_j)=0\quad \text{for } i\neq j. 
  \] Therefore the decomposition of \(\omega_A\) induces a direct sum
  decomposition of $A$-modules 
  \[
  B\cong \bigoplus_{i=1}^r \End_A(\omega_i).
  \]
  Let \(B_i:=\End_A(\omega_i), \) with the natural ring structures on $B_i$, we also get a product decomposition as rings 
  \[
  B\cong \prod_{i=1}^r B_i.\]
  Moreover, $B_i$ is $S_2$ and $\Supp_A(B_i)=\Supp_A(\omega_i)=S_i$. If \(\mathfrak p\in S_i\) has codimension at most
  \(1\) in \(S_i\), then it has
  codimension at most \(1\) in \(\Spec \overline A\). Hence \(\mathfrak p\) lies
  on no other component \(S_j\), and
  \[
  (B_i)_{\mathfrak p}\cong B_{\mathfrak p}\cong \overline A_{\mathfrak p},
  \]
  which is indecomposable as an \(A_{\mathfrak p}\)-module. Since \(S_i\)
  is connected in codimension \(1\), Theorem~\ref{structure_S_2 sheaf}
  implies that \(B_i\) is indecomposable as an \(A\)-module.

Any nontrivial idempotent of $B_i$   would induce a
nontrivial decomposition of \(\omega_i\) as an \(A\)-module. Since \(\omega_i\)
is indecomposable, \(B_i\) has no nontrivial idempotents; equivalently, \(B_i\)
is connected as a ring. The uniqueness statement follows from the uniqueness
part of Theorem~\ref{structure_S_2 sheaf}.

(c) Let \(\overline A:=A/J(A)\). The diagonal map
\[
i:\overline A\longrightarrow \bigoplus_{i=1}^r A_i
\]
  is  injective, since
\(\bigcap_{i=1}^r J_i=J(A).\) 
Let $N := \operatorname{coker}(i)$.  If
\(\mathfrak p\in\operatorname{Spec}\overline A\) has codimension at most
\(1\), then \(\mathfrak p\) lies in a unique component \(S_i\). Hence \(\overline A_{\mathfrak p}\cong (A_i)_{\mathfrak p},\) \((A_j)_{\mathfrak p}=0  \) for \(j\neq i\), so \(i_{\mathfrak p}\) is an isomorphism.
Thus  \(\dim N\le \dim A-2\).
Since  $\dim \overline{A} = \dim A_i$, it follows   that 
\[
\omega_{\overline{A}} \cong \bigoplus_{i=1}^r \omega_{A_i} .
\]
Moreover, by the decomposition \eqref{decomposition_omega_A}, \(\omega_{\overline A}\cong \omega_A\cong \bigoplus_{i=1}^r \omega_i\). Since  $S_i = \Supp(\omega_i) = \Supp(\omega_{A_i})$, the uniqueness of the decomposition in~(a) implies that $\omega_i \cong \omega_{A_i}$. Therefore, $\operatorname{ann}(\omega_i) = J_i$, and
\(B_i = \End_A(\omega_i) \cong \End_{A_i}(\omega_i)\)
is the $S_2$-ification of $A_i$.

The map $\alpha: \overline{A} \xrightarrow{\alpha} B$ clearly factors through $\bigoplus \alpha_i$, since $\alpha$ is given by multiplication by elements of $\overline{A}$ and hence acts componentwise. That is,
\[
\alpha \colon \overline{A} \xrightarrow{i} \bigoplus_{i=1}^r A_i \xrightarrow{\bigoplus \alpha_i} \bigoplus_{i=1}^r B_i.
\]
Applying the snake lemma to the following commutative diagram with exact rows,
\[
\begin{tikzcd}
	0 \arrow[r] & \overline{A} \arrow[r,"\alpha"] \arrow[d, hook, "i"'] 
	& B \arrow[r] \arrow[d, equal] 
	& C \arrow[r] \arrow[d, two heads, "k"] 
	& 0 \\
	0 \arrow[r] & \bigoplus\limits_{i=1}^r A_i \arrow[r, "\bigoplus \alpha_i"] 
	& \bigoplus\limits_{i=1}^r B_i  \arrow[r] 
	&  \bigoplus\limits_{i=1}^r C_i \arrow[r] 
	& 0
\end{tikzcd}
\]
we obtain $\operatorname{coker} i \cong \ker k  $, and hence the exact sequence
\[
0 \to \overline{A} \to \bigoplus_{i=1}^r A_i \to C \to \bigoplus_{i=1}^r C_i \to 0.
\]

(d) Let \(d=\dim A\). For any finitely generated \(A\)-module \(M\) of
dimension \(d\), we have
\[
K_M\cong \operatorname{Hom}_A(M,\omega_A).
\] 
Since \(\omega_i\) is equidimensional, unmixed and \(S_2\), by \cite[Prop.~3.2]{canonical_module}, the canonical biduality map induces an isomorphism 
\(\omega_i \cong K_{K_{\omega_i}}\).
Moreover,  $\dim B_i = \dim \omega_i =\dim A= d$, thus the canonical module of \(B_i\) is given by
\[
\begin{aligned}
	K_{B_i}
	&\cong \operatorname{Hom}_A(B_i,\omega_A) \\
	&\cong \operatorname{Hom}_A\big(\operatorname{Hom}_A(\omega_i,\omega_i), \omega_A\big) \\
	&\cong \operatorname{Hom}_A\big(\operatorname{Hom}_A(\omega_i,\omega_A), \omega_A\big) \\
	&\cong K_{K_{\omega_i}} \cong \omega_i,
\end{aligned}
\] 
where we used \(\operatorname{Hom}_A(\omega_i,\omega_j)=0\) for \(i\ne j\), as shown in the proof of (b).

(e) 
Since \(A\) is complete local and \(B\) is module-finite over \(A\), the
ring \(B\) is semilocal and complete with respect to its Jacobson radical. Hence 
\[
B \cong B_{\mathfrak n_1}\times\cdots\times B_{\mathfrak n_s},
\]
where $\mathfrak n_1,\dots,\mathfrak n_s$ are the maximal ideals of $B$.
Each factor \(B_{\mathfrak n_j}\) is local, hence connected.
By the uniqueness of product decomposition in (b),
the factors \(B_1,\ldots,B_r\) are precisely \(B_{\mathfrak n_1},\dots,B_{\mathfrak n_s}\), up to
permutation. In
particular, each \(B_i\) is local.
 \end{proof}

\begin{remark}
The assumption that \(A\) is a homomorphic image of a Gorenstein local
ring is used here to ensure the existence of a canonical module and to
place the argument within the usual canonical-module formalism. In a more general scheme-theoretic setting, where a dualizing complex need not exist, an analogous role is played by Kollár's torsion-free \(S_2\)-dualizing
sheaves; see \cite[Thm.~1, Thm.~2]{Kollar2022}.
	
Since the arguments above are governed largely by the \(S_2\)-extension
property and by connectedness in codimension \(1\), one expects the
decomposition theorem here to admit a parallel formulation in terms of
torsion-free \(S_2\)-dualizing sheaves on more general schemes.  
\end{remark}

\begin{corollary}
	Let $C$ be an equidimensional Noetherian $S_2$-ring, and let
	\[
	\Spec C = T_1 \cup \cdots \cup T_s
	\]
	be the decomposition into connected components in codimension $1$.
	Then $C$ admits a canonical product decomposition
	\[
	C \cong \prod_{j=1}^s C_j
	\]
	such that
	\[
	\Supp_C(C_j) = T_j \qquad \text{for all } j.
	\]
	Each $C_j$ is connected, equivalently indecomposable as a ring, and this is the unique decomposition of $C$ as a product of connected rings, up to permutation.
	
	If \(C\) is complete semilocal, then each \(C_j\) is complete local. In
	particular, if \(C\) is complete semilocal and \(\Spec C\) is connected in
	codimension \(1\), then \(C\) is local.
\end{corollary}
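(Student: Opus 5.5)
The plan is to apply Theorem~\ref{structure_S_2 sheaf} to the coherent sheaf \(\widetilde C\) on \(X=\Spec C\), and then upgrade the resulting module decomposition to a ring decomposition. Since \(C\) is a Noetherian \(S_2\)-ring, \(\widetilde C\) is a nonzero coherent \(S_2\)-sheaf with \(\Supp\widetilde C=\Spec C\), which is equidimensional by hypothesis. Theorem~\ref{structure_S_2 sheaf} then gives
\[
C\cong\bigoplus_{j=1}^s C_j,\qquad C_j:=\Gamma(U_j,\widetilde C),\qquad U_j:=T_j\setminus Y,\qquad Y:=\bigcup_{i\neq j}(T_i\cap T_j).
\]
To see that this is a ring decomposition, I will write it as \(C\cong\Gamma(U,\mathcal O_X)=\prod_j\Gamma(U_j,\mathcal O_X)\), with \(U=X\setminus Y\) and the isomorphism coming from Lemma~\ref{lemma}. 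Restriction of sections is a ring homomorphism and \(U\) is the disjoint union of the open sets \(U_j\), so each \(C_j\) is a ring. Equivalently, the characteristic functions of the \(U_j\) are orthogonal idempotents in \(\Gamma(U,\mathcal O_X)\cong C\) that sum to \(1\). By construction \(\Supp_C(C_j)=T_j\), and the decomposition is canonical because it depends only on the \(T_j\).

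For the connectedness of each \(C_j\), I will use the indecomposability criterion, Theorem~\ref{sheaf_indecomposable}, applied to \(C_j\) viewed as an \(S_2\)-ring in its own right. The ring \(C_j\) is \(S_2\) because it is a direct summand of \(C\). Suppose \(e\in C_j\) is a nontrivial idempotent. Then \(C_j=eC_j\times(1-e)C_j\), and the two supports, being clopen subsets of \(\Spec C_j\), are disjoint. This splits \(\Spec C_j\cong T_j\) into two disjoint nonempty closed sets. That is impossible, since \(T_j\) is connected in codimension \(1\) and in particular connected. This argument requires no local indecomposability hypothesis: ring idempotents give disjoint supports, so there is no codimension-one intersection to rule out. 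For uniqueness, let \(C\cong\prod_k D_k\) be any decomposition into connected rings. The \(\Spec D_k\) are disjoint clopen subsets of \(\Spec C\). Each is a union of irreducible components, and each is connected in codimension \(1\) by Theorem~\ref{sheaf_indecomposable}, because \(D_k\) is indecomposable as a \(D_k\)-module and is \(S_2\). Hence each \(\Spec D_k\) lies in some \(T_j\). The pieces contained in a fixed \(T_j\) have product equal to \(C_j\), by the extension argument used in the uniqueness part of Theorem~\ref{structure_S_2 sheaf}. Since \(C_j\) is connected, exactly one piece lies in each \(T_j\).

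For the complete semilocal case, I will argue as in Theorem~\ref{decomposition_canonical_S_2 ification}(e). A complete semilocal ring is the product of its localizations at its maximal ideals, and each such localization is local and hence connected. By the uniqueness just proved, these local factors coincide with the \(C_j\) up to permutation, and each factor is complete. If \(\Spec C\) is connected in codimension \(1\), then \(s=1\) and \(C\) is local. I expect the main obstacle to be the careful verification that the sheaf-level decomposition respects the ring structure and that the uniqueness argument transfers to rings; I would settle both through the idempotent description given above.
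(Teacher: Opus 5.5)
Your proposal is correct and follows essentially the paper's route: apply the structure theorem for coherent \(S_2\)-sheaves to \(C\) as a module over itself, recognize the summands as factor rings via idempotents, and treat the complete semilocal case by matching the \(C_j\) with the localizations at maximal ideals using uniqueness. Your one deviation is harmless. You prove each \(C_j\) connected topologically, from \(\Spec C_j\cong T_j\) being connected, whereas the paper invokes the indecomposability part of that theorem; its local hypothesis holds automatically there because each \(C_{\mathfrak p}\) is a local ring.
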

\begin{proof}
	Apply Theorem~\ref{structure_S_2 sheaf} to \(C\) as a finitely generated \(S_2\)-module
	over itself. This gives a canonical decomposition
	\(	C\cong \bigoplus_{j=1}^s C_j\)
	as \(C\)-modules with \(\Supp_C(C_j)=T_j\). Each summand is of the form
	\(C_j=Ce_j\), where \(e_j\in C\) is the corresponding idempotent, hence
	\(C_j\) is naturally a ring with identity \(e_j\) and the above direct sum
	is also a product decomposition of rings. The connectedness and uniqueness
	statements follow from the uniqueness part of
	Theorem~\ref{structure_S_2 sheaf}.
	
	If \(C\) is complete semilocal, then it decomposes as a product of its
localizations at maximal ideals 
\(C\cong \prod_{\mathfrak n\in \operatorname{mSpec} C} C_{\mathfrak n}.\)
Since both the factors \(C_j\) and the local factors \(C_{\mathfrak n}\) are
connected, the uniqueness of the product decomposition into connected rings
implies that each \(C_j\) is one of the \(C_{\mathfrak n}\). Hence each
\(C_j\) is complete local.
\end{proof}

\subsection{A discussion on connectedness theorems}\label{connectedness_section}
Faltings' connectedness theorem \cite[Thm.~6]{falting}  in local form
says that if \((A,\mathfrak m)\) is a complete local domain and \(\mathfrak a\)
is generated by at most \(\dim A-2\) elements, then the punctured spectrum
\(\Spec(A/\mathfrak a)\setminus\{\mathfrak m/\mathfrak a\}\)
is connected.
Hochster and Huneke generalized this result by replacing the
domain condition with the indecomposability of the top local cohomology
module. More precisely, if \((A,\mathfrak m)\) is a complete equidimensional
local ring of dimension \(n\), and if \(H^n_{\mathfrak m}(A)\), equivalently
the canonical module \(\omega_A\), is indecomposable, then for every proper
ideal \(\mathfrak a\) generated by at most \(n-2\) elements, the punctured
spectrum 
\(\Spec(A/\mathfrak a)\setminus\{\mathfrak m/\mathfrak a\}\)
is connected
\cite[Thm.~3.3]{hochster}.

Hochster and Huneke also studied equivalent conditions for the canonical
module to be indecomposable, which we recall below. In particular, when
\((A,\mathfrak m)\) is complete and equidimensional of dimension \(n\), the
indecomposability of \(H^n_{\mathfrak m}(A)\) is equivalent to connectedness in
codimension \(1\) of \(\Spec A\). Once this codimension-\(1\) connectedness is
known, the usual punctured-spectrum connectedness conclusions follow from
Grothendieck's connectedness theorem
\cite[Thm.~19.2.10]{local cohomology}. Indeed, if
\(\operatorname{ara}(\mathfrak a)\le n-2\), then
\[
c(A/\mathfrak a)
\ge
\min\{c(A),\operatorname{sdim}A-1\}
-\operatorname{ara}(\mathfrak a)
\ge 1,
\]
and hence
\(\Spec(A/\mathfrak a)\setminus\{\mathfrak m/\mathfrak a\}\)
is connected. The condition \(\operatorname{ara}(\mathfrak a)\le n-2\) may also be replaced
by the cohomological dimension condition
\(\operatorname{cd}(\mathfrak a,A)\le n-2\), as in the discussion preceding
\cite[Thm.~19.2.10]{local cohomology}; see also \cite[Thm.~1.6]{varbaro}.

\begin{theorem}\cite[Theorem~3.2]{hochster}\label{hochster}
	Let $(A,\mathfrak{m})$ be a complete local equidimensional ring of dimension $n$. The following are equivalent:
	\begin{enumerate}
		\item[(a)] $H^n_{\mathfrak{m}}(A)$ is indecomposable.
		
		\item[(b)] The canonical module $\omega_A$ of $A$ is indecomposable.
		
		\item[(c)] The $S_2$-ification $S \cong \operatorname{Hom}_A(\omega_A,\omega_A)$   is local.
		
		\item[(d)] For every ideal $J$ of height at least $2$, $\operatorname{Spec} A \setminus V(J)$ is connected.
		
		\item[(e)] For any two primes $\mathfrak{p}, \mathfrak{q} \in \operatorname{Assh} A$, there is a chain
		\[
		\mathfrak{p}=\mathfrak{p}_0,\mathfrak{p}_1,\dots,\mathfrak{p}_s=\mathfrak{q}
		\]
		in $\operatorname{Assh} A$ such that
		\(	\operatorname{ht}(\mathfrak{p}_{i-1}+\mathfrak{p}_i)=1\)
		for all $i $.
	\end{enumerate}
\end{theorem}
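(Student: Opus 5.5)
Since the Hochster--Huneke theorem is cited rather than reproved, I would recover it from the results established above, using completeness only where it is genuinely needed. \textbf{Step 1: (a)$\Leftrightarrow$(b).} By local duality over the complete local ring $A$ we have $H^n_{\mathfrak m}(A)\cong \omega_A^\vee$. Matlis duality is an additive anti-equivalence between finitely generated and Artinian $A$-modules, so it preserves direct-sum decompositions, and indecomposability transfers in both directions.

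\textbf{Step 2: (d)$\Leftrightarrow$(e).} Both conditions say that $\Spec A$ is connected in codimension $1$, because $A$ is equidimensional. For (e), note that $\Assh A=\operatorname{Min} A$ by equidimensionality. Also, $\operatorname{ht}(\mathfrak p_{i-1}+\mathfrak p_i)=1$ is the chain condition from \cite[Prop.~1.1]{hartshorne}, once we use that $A$ is catenary (being complete), so that codimension in $\Spec A$ is computed by heights. The only degenerate case is height $0$, which forces $\mathfrak p_{i-1}=\mathfrak p_i$. For (d), the closed sets of codimension $\ge 2$ are exactly the $V(J)$ with $\operatorname{ht}J\ge 2$. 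The relation $\Spec A=\Supp\omega_A$ holds since $J(A)=0$ is not needed here: the supports agree up to the top-dimensional part, and equidimensionality makes $\Spec(A/J(A))=\Spec A$ as sets.

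\textbf{Step 3: (b)$\Leftrightarrow$(d).} I would apply Theorem~\ref{sheaf_indecomposable} to $\mathcal F=\widetilde{\omega_A}$. By Proposition~\ref{prop_canonical_module}, $\omega_A$ is $S_2$, and its support is $\Spec A$. For the local hypothesis, take $\mathfrak p$ with $\operatorname{codim}\le 1$. Then $\overline A_{\mathfrak p}\cong\End((\omega_A)_{\mathfrak p})$, as already used in the proof of Theorem~\ref{decomposition_canonical_S_2 ification}(a), because $\overline A$ is $S_2$ in codimension $\le1$. Since $\overline A_{\mathfrak p}$ is a local ring, it has no nontrivial idempotents, so $(\omega_A)_{\mathfrak p}$ is indecomposable. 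Theorem~\ref{sheaf_indecomposable} then yields the equivalence.

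\textbf{Step 4: (c)$\Leftrightarrow$(b),(d).} By Theorem~\ref{decomposition_canonical_S_2 ification}(b),(e), $B\cong\prod_{i=1}^r B_i$ with each $B_i$ local and $r$ the number of codimension-$1$ connected components. Hence $B$ is local iff $r=1$ iff (d). Completeness enters only in this step, through part (e), and in the catenarity used in Step 2.

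\textbf{Main obstacle.} I expect the local-indecomposability input in Step 3 to be the delicate point. It requires the codimension-$\le1$ isomorphism $\overline A_{\mathfrak p}\to\End((\omega_A)_{\mathfrak p})$. This holds because in dimension $\le1$ the unmixed quotient $\overline A_{\mathfrak p}$ is $S_2$, hence $\omega$ of it is faithful with the correct endomorphism ring. I would verify that claim directly via \cite[Thm.~12.2.7]{local cohomology}.
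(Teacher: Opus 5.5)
Your proposal is correct and is essentially the paper's own recovery of this result in Corollary~\ref{cor_hochster}: Matlis duality gives (a)$\Leftrightarrow$(b), the $S_2$ indecomposability--connectedness criterion gives (b)$\Leftrightarrow$(d), and Theorem~\ref{decomposition_canonical_S_2 ification}(b),(e) gives (c), with (d)$\Leftrightarrow$(e) being the chain reformulation of codimension-$1$ connectedness that the paper leaves implicit. The only variation is where you check the local hypothesis: you apply Theorem~\ref{sheaf_indecomposable} directly to $\omega_A$, using $\End_{A_{\mathfrak p}}((\omega_A)_{\mathfrak p})\cong\overline A_{\mathfrak p}$ for $\operatorname{ht}\mathfrak p\le 1$ as in the proof of Theorem~\ref{decomposition_canonical_S_2 ification}(a), whereas the paper goes through Theorem~\ref{canonical_M}, applying the criterion to $K_{K_A}\cong\End_A(\omega_A)$ (where local indecomposability comes from $\overline A_{\mathfrak p}$ because the biduality map is an isomorphism in codimension $\le 1$) and then transferring to $\omega_A$ by biduality.
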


Schenzel later reproved this theorem from the perspective of ideal transforms
\cite[Cor.~6.3]{connectedness}. If \(I\) defines the non-\(S_2\) locus, then
the \(S_2\)-ification satisfies
\(\operatorname{Hom}_A(\omega_A,\omega_A)\cong D_I(A),\)
so the connectedness condition can be interpreted through the
indecomposability of an ideal transform; see Remark \ref{remark:section_functor_connectedness}.

For a finitely generated \(A\)-module \(M\), the analogue of the
\(S_2\)-ification of a ring is the \(S_2\)-hull of \(M\)
\cite[Def.~9.3]{Kollar2023}. When \(A\) admits a canonical module, this
hull is given by \(K_{K_M}\), as defined in 
\eqref{def_canonical_module}; see \cite[Section~3]{canonical_module}.
The theorem below gives a module-theoretic form of the Hochster--Huneke
equivalences, as a consequence of the structure of finite
\(S_2\)-modules. In particular, it
recovers the Hochster--Huneke equivalences in their setting. In contrast with the Hochster--Huneke
theorem, where the ring structure and localness of the \(S_2\)-ification play
a central role, the following shows that the
codimension-one connectedness criterion is already controlled by the
indecomposability of the underlying finite \(S_2\)-module, rather than by a
local ring structure on the \(S_2\)-ification.
\begin{theorem}\label{canonical_M}
	Let \((A,\mathfrak m)\) be a local ring that is a homomorphic image of a
	Gorenstein local ring, and let \(M\) be a finitely generated \(A\)-module.
	Suppose that \(\overline M_{\mathfrak p}\) is indecomposable for every
	\(\mathfrak p\in \Supp\overline M\) with
	\(	\operatorname{codim}_{\Supp\overline M}(\mathfrak p)\le 1.\)
	Then the following are equivalent:
	\begin{enumerate}
		\item[\rm(a)] \(K_M\) is indecomposable;
		\item[\rm(b)] \(K_{K_M}\) is indecomposable;
		\item[\rm(c)] \(\Supp\overline M\) is connected in codimension \(1\).
	\end{enumerate}
\end{theorem}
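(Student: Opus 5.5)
The plan is to reduce everything to Theorem~\ref{sheaf_indecomposable}, applied on $\Spec A$ to the $S_2$-modules $K_M$ and $K_{K_M}$. By Proposition~\ref{prop_canonical_module}, both modules are $S_2$. Moreover $\Ass_A K_M=\Assh_A M$, and $K_M$ has no embedded primes, so $\Supp K_M$ is the union of the top-dimensional components of $\Supp M$, that is, $\Supp K_M=\Supp\overline M$. Applying the same reasoning to $K_M$, which is equidimensional of dimension $t=\dim M$, gives $\Supp K_{K_M}=\Supp K_M=\Supp\overline M$. Replacing $M$ by $\overline M$ does not change $K_M$, since $K^t(J(M))=0$ and $K^{t-1}(J(M))$ has dimension at most $t-1<t$ while $K_M$ has no associated primes of that dimension. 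So we may assume $M=\overline M$ is unmixed and equidimensional, with $S:=\Supp M$ equidimensional.

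The implications (a)$\Rightarrow$(c) and (b)$\Rightarrow$(c) follow directly from the first part of Theorem~\ref{sheaf_indecomposable}: an indecomposable $S_2$-module has support connected in codimension $1$, and both supports equal $S$. For the converses, I apply the second part of Theorem~\ref{sheaf_indecomposable}. This requires showing that $(K_M)_{\mathfrak p}$ and $(K_{K_M})_{\mathfrak p}$ are indecomposable for every $\mathfrak p\in S$ with $\operatorname{codim}_S\mathfrak p\le 1$.

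To check this local condition, localize at such a $\mathfrak p$. Canonical modules localize up to the usual shift, so $(K_M)_{\mathfrak p}\cong K_{M_{\mathfrak p}}$ over $A_{\mathfrak p}$, using that $S$ is equidimensional and the codimension function is well behaved for homomorphic images of Gorenstein rings. Since $\dim M_{\mathfrak p}\le 1$ and $M_{\mathfrak p}$ is unmixed, $M_{\mathfrak p}$ is Cohen--Macaulay: it has depth $\ge 1$ when its dimension is $1$, and it is trivially Cohen--Macaulay in dimension $0$. Hence $K_{K_{M_{\mathfrak p}}}\cong M_{\mathfrak p}$ by duality for Cohen--Macaulay modules, and likewise $(K_{K_M})_{\mathfrak p}\cong M_{\mathfrak p}$, which is indecomposable by hypothesis. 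For $K_{M_{\mathfrak p}}$ itself, duality is an anti-equivalence on maximal Cohen--Macaulay modules of that dimension supported on $\Supp M_{\mathfrak p}$. A decomposition $K_{M_{\mathfrak p}}=N_1\oplus N_2$ into nonzero summands would therefore give $M_{\mathfrak p}\cong K_{N_1}\oplus K_{N_2}$, where each $K_{N_i}$ is nonzero because $N_i$ is a nonzero Cohen--Macaulay summand of full dimension. This contradicts the hypothesis, so both local conditions hold, and Theorem~\ref{sheaf_indecomposable} gives (c)$\Rightarrow$(a) and (c)$\Rightarrow$(b).

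The main obstacle is the localization step: the compatibility $(K_M)_{\mathfrak p}\cong K_{M_{\mathfrak p}}$ together with the Cohen--Macaulay duality in dimension $\le 1$. For the compatibility I would cite Schenzel's localization of deficiency modules, which gives $(K^i(M))_{\mathfrak p}\cong K^{i-\dim A/\mathfrak p}(M_{\mathfrak p})$. Equidimensionality of $S$ is needed so that $t-\dim A/\mathfrak p=\dim M_{\mathfrak p}$ for every such $\mathfrak p$, which is exactly why we pass to $\overline M$ at the start.
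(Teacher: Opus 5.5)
Your proof is correct, but it is organized differently from the paper's.

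The paper proves (a)$\Leftrightarrow$(b) globally and without the local hypothesis. Since $K_M$ is equidimensional, unmixed and $S_2$, Schenzel's biduality gives $K_M\cong K_{K_{K_M}}$. Applying $\Ext_R^{n-t}(-,R)$ to a nontrivial splitting of either module then gives a nontrivial splitting of the other, because every nonzero summand has dimension $t$. The paper then applies Theorem~\ref{sheaf_indecomposable} only once, to $K_{K_M}$. It checks the local hypothesis using Schenzel's result that the biduality map $\overline M\to K_{K_M}$ is injective with cokernel of dimension at most $t-2$.

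You instead apply Theorem~\ref{sheaf_indecomposable} to both $K_M$ and $K_{K_M}$, and verify the codimension-$\le 1$ hypothesis directly. You localize the deficiency modules, observe that $\overline M_{\mathfrak p}$ is Cohen--Macaulay of dimension at most $1$, and use classical Cohen--Macaulay duality.

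What each route buys:
\begin{itemize}
\item Your route is more elementary. It avoids global biduality and in effect reproves locally the part of Schenzel's cokernel estimate that is needed.
\item Your route gives (a)$\Leftrightarrow$(b) only under the local indecomposability hypothesis. The paper's argument shows this equivalence holds for every finitely generated $M$.
\item Your splitting argument for $K_{M_{\mathfrak p}}$ is the local analogue of the paper's global argument. Running it once on $K_M$ would have let you apply the sheaf criterion only to $K_{K_M}$.
\end{itemize}

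One small slip, which does not affect the conclusion. In reducing to $\overline M$, the relevant terms of the long exact sequence are $K^{t+1}(J(M))$ and $K^{t}(J(M))$, not $K^{t-1}(J(M))$. Both vanish because $\dim J(M)<t$, so $K_M\cong K_{\overline M}$ follows immediately.
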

\begin{proof}
	By Proposition~\ref{prop_canonical_module}(b), the module \(K_M\) is
	equidimensional, unmixed and   \(S_2\). Hence the canonical
	biduality map \cite[Prop.~3.2]{canonical_module}, applied to \(K_M\),
	gives an isomorphism
	\[
	K_M\cong K_{K_{K_M}}.
	\]
	Let \(R\) be a Gorenstein local ring of dimension \(n\) mapping onto \(A\), and set \(t=\dim M\). Recall that, for a finitely generated \(A\)-module
	\(N\) of dimension \(t\), its canonical module is
	\(	K_N:=\operatorname{Ext}_R^{n-t}(N,R).\) Since \(K_M\) is unmixed and equidimensional of dimension \(t\), every nonzero direct summand of \(K_M\) also has dimension \(t\). Thus applying 
	\(	\Ext_R^{n-t}(-,R)\)
	to a nontrivial decomposition of \(K_M\) gives a nontrivial decomposition of \(K_{K_M}\).
	
	Conversely, \(K_{K_M}\) is also unmixed and equidimensional of dimension \(t\). Hence applying \(\Ext_R^{n-t}(-,R)\) to a nontrivial decomposition of \(K_{K_M}\) together with the biduality isomorphism
	\(	K_M\cong K_{K_{K_M}},\)
	gives a nontrivial decomposition of \(K_M\). 
	
	Therefore \(K_M\) is indecomposable if and only if \(K_{K_M}\) is indecomposable.
	
	By Proposition~\ref{prop_canonical_module}(b),  
\(	\operatorname{Supp}K_{K_M}=\operatorname{Supp}\overline M.\)
	  Since \(\dim J(M)<t\), we have
\(	K_M\cong K_{\overline M}.\)
	Moreover, by \cite[Prop.~3.3]{canonical_module}, the natural biduality
	map
	\[
	\overline M\longrightarrow K_{K_{\overline M}}\cong K_{K_M}
	\]
	is injective with cokernel of dimension at most \(t-2\). Hence,
	for every prime \(\mathfrak p\in \operatorname{Supp}\overline M\) with 
	\(	\operatorname{codim}_{\operatorname{Supp}\overline M}(\mathfrak p)\le 1,\)
	localizing the biduality map gives an isomorphism
	\(	\overline M_{\mathfrak p}
	\cong
	(K_{K_M})_{\mathfrak p}.\)
	By hypothesis, these localizations are indecomposable.
	
	Since \(K_{K_M}\) is \(S_2\) and has support
	\(\operatorname{Supp}\overline M\), Theorem~\ref{sheaf_indecomposable}
	applies and gives
	\[
	K_{K_M}\text{ is indecomposable}
	\Longleftrightarrow
	\operatorname{Supp}\overline M
	\text{ is connected in codimension }1.
	\]
\end{proof}

In the special case \(M=A\), this gives the Hochster--Huneke equivalences in
Theorem~\ref{hochster}. More precisely, we obtain the equivalence between
indecomposability of \(\omega_A\), indecomposability of the \(S_2\)-ification
as an \(A\)-module, and connectedness in codimension \(1\) of
\(\operatorname{Spec}(A/J(A))\), without assuming completeness. In the complete
case this also recovers the localness of the \(S_2\)-ification and the
indecomposability of \(H^d_{\mathfrak m}(A)\).

\begin{corollary}\label{cor_hochster}
	Let \((A,\mathfrak m)\) be a local ring which is a homomorphic image of
	a Gorenstein local ring, and let \(\omega_A\) be the canonical module of
	\(A\). Then the following are equivalent:
	\begin{enumerate}
		\item[(a)] \(\omega_A\) is indecomposable;
		\item[(b)] \(\omega_{\omega_A}\cong  \operatorname{End}_A(\omega_A)\)
		is indecomposable;
		\item[(c)] \(\operatorname{Spec}(A/J(A))\) is connected in codimension \(1\).
	\end{enumerate}
	If \(A\) is complete, then these conditions are also equivalent to
\(\operatorname{End}_A(\omega_A)\) being a local ring, and to
	\(H^{\dim A}_{\mathfrak m}(A)\) being indecomposable.
\end{corollary}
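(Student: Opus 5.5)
The plan is to specialize Theorem~\ref{canonical_M} to \(M=A\), and then add the complete case using Theorem~\ref{decomposition_canonical_S_2 ification}(e) and Matlis duality.

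\emph{Setup for \(M=A\).} Set \(d=\dim A\). Then \(K_A=K^d(A)=\omega_A\), and \(\overline M=\overline A=A/J(A)\), so \(\Supp\overline M=\Spec(A/J(A))\). I will identify \(K_{K_A}=K_{\omega_A}\) with \(\operatorname{End}_A(\omega_A)\). Since \(\omega_A\) has dimension \(d\), we have \(K_{\omega_A}\cong\Hom_A(\omega_A,\omega_A)\); this is the formula \(K_N\cong\Hom_A(N,\omega_A)\) for \(\dim N=d\) used in part (d) of Theorem~\ref{decomposition_canonical_S_2 ification}. This justifies the notation \(\omega_{\omega_A}\cong\operatorname{End}_A(\omega_A)\).

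\emph{Local hypothesis.} Next I verify the local indecomposability hypothesis of Theorem~\ref{canonical_M}. Let \(\mathfrak p\in\Spec\overline A\) with codimension at most \(1\). Then \(\overline A_{\mathfrak p}\) is a local ring, and a local ring is indecomposable as a module over itself, since it has no nontrivial idempotents. It is therefore also indecomposable as an \(A_{\mathfrak p}\)-module, because \(A_{\mathfrak p}\)-submodules of \(\overline A_{\mathfrak p}\) are exactly its ideals. So the hypothesis holds with no extra condition, and Theorem~\ref{canonical_M} gives (a)\(\Leftrightarrow\)(b)\(\Leftrightarrow\)(c) directly.

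\emph{Localness of the \(S_2\)-ification.} Now suppose \(A\) is complete and set \(B=\operatorname{End}_A(\omega_A)\). By Theorem~\ref{decomposition_canonical_S_2 ification}(b),(e), \(B\cong\prod_{i=1}^r B_i\) with each \(B_i\) local, where \(r\) is the number of codimension-\(1\) connected components of \(\Spec(A/J(A))\).
\begin{itemize}
\item If (c) holds, then \(r=1\) and \(B\) is local.
\item Conversely, if \(B\) is local, then it is connected as a ring. By the uniqueness statement in (b), this forces \(r=1\), which is (c).
\end{itemize}
The first is the direct consequence of Theorem~\ref{decomposition_canonical_S_2 ification}(b),(e), and the converse uses the uniqueness statement there.

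\emph{Top local cohomology.} Local duality over the complete ring \(A\) gives \(H^d_{\mathfrak m}(A)\cong\omega_A^\vee\), and \(\omega_A\cong H^d_{\mathfrak m}(A)^\vee\) since \(\omega_A\) is finitely generated over a complete ring. Matlis duality is an additive anti-equivalence between finitely generated and Artinian modules, so it preserves and reflects nontrivial direct sum decompositions. Hence \(H^d_{\mathfrak m}(A)\) is indecomposable if and only if \(\omega_A\) is.

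\emph{Main obstacle.} The only delicate point is the identification \(K_{\omega_A}\cong\operatorname{End}_A(\omega_A)\) and making sure the dimension bookkeeping (\(\dim\omega_A=d\)) is right. Everything else is a direct citation of the earlier results.
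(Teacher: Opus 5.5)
Your proposal is correct and follows the paper's proof essentially step for step. You specialize Theorem~\ref{canonical_M} to $M=A$, using $K_{\omega_A}\cong\operatorname{End}_A(\omega_A)$ because $\dim\omega_A=\dim A$, and noting that the local indecomposability hypothesis is automatic since $(A/J(A))_{\mathfrak p}$ is local; you then use parts (b) and (e) of the decomposition theorem for $\omega_A$ and $\operatorname{End}_A(\omega_A)$ for localness, and Matlis duality for $H^{\dim A}_{\mathfrak m}(A)$, exactly as the paper does (your explicit remark that a local ring cannot be a product of $r\ge 2$ nonzero factors merely spells out a step the paper leaves implicit).
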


\begin{proof}
	Apply Theorem~\ref{canonical_M} to \(M=A\). Then
	\[
	\overline M=A/J(A), \qquad K_{K_M} = \omega_{\omega_A}.\]
	Moreover, since \(\dim \omega_A=\dim A\), we have
	\[
	\omega_{\omega_A}
	\cong
\operatorname{End}_A(\omega_A).
	\]
	The local indecomposability hypothesis in Theorem~\ref{canonical_M} is
	automatic,  \((A/J(A))_{\mathfrak p}\) is indecomposable as an $A_{\mathfrak p}$ module for any 
	\(\mathfrak p\in \operatorname{Spec}(A/J(A))\). 
  Hence
	Theorem~\ref{canonical_M} gives the equivalence of \((a)\), \((b)\), and
	\((c)\).
	
Now assume that \(A\) is complete local, and set
\(B:=\operatorname{End}_A(\omega_A).\)
By Theorem~\ref{decomposition_canonical_S_2 ification}\textup{(b),(e)}, \(B\)
is local if and only if \(\operatorname{Spec}(A/J(A))\) is connected in
codimension \(1\). Thus localness of \(B\) is equivalent to the above
conditions.

Finally, by local duality,
\[
\omega_A^\vee\cong H^d_{\mathfrak m}(A),
\qquad
H^d_{\mathfrak m}(A)^\vee\cong \omega_A
\]
where \(d=\dim A\). Hence \(H^d_{\mathfrak m}(A)\) is indecomposable if and only
if \(\omega_A\) is indecomposable.
\end{proof}

We record the following Hochster--Huneke type consequence, which follows
from connectedness in codimension \(1\) together with Grothendieck's connectedness
theorem. It recovers the usual complete case, for instance
\cite[Corollary~1.7]{varbaro}, and gives a non-complete formulation.
\begin{corollary}\label{universally catenary}
	Let \((A,\mathfrak m)\) be a universally catenary local ring, and let \(d=\dim A\). Suppose that
	\(H^d_{\mathfrak m}(A)\) is indecomposable. Then for any proper  ideal
	\(\mathfrak a\subseteq A\) with
	\(	\operatorname{cd}(\mathfrak a,A)\le d-2\), 
	\( 	\operatorname{Spec}(A/(J(A)+\mathfrak a))\setminus\{\mathfrak m\}\)
	is connected. In particular, if \(A\) is equidimensional, then the punctured spectrum 
	\(	\operatorname{Spec}(A/\mathfrak a)\setminus\{\mathfrak m\}\)
	is connected.
\end{corollary}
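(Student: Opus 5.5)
Set \(\overline A:=A/J(A)\). The plan is to pass from the indecomposability of \(H^d_{\mathfrak m}(A)\) to connectedness in codimension \(1\) of \(\Spec\overline A\), then apply Grothendieck's connectedness theorem in the form \cite[Thm.~19.2.10]{local cohomology}, with \(\operatorname{ara}\) replaced by \(\operatorname{cd}\). Since \(A\) need not be complete, and need not be a homomorphic image of a Gorenstein ring, Corollary~\ref{cor_hochster} cannot be used directly. I will therefore pass to the completion \(\widehat A\).

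\textbf{Step 1: completion.} First, \(H^d_{\mathfrak m}(A)\cong H^d_{\widehat{\mathfrak m}}(\widehat A)\) as Artinian modules. Moreover, \(A\)-submodules and \(\widehat A\)-submodules of an Artinian \(A\)-module coincide, so \(H^d_{\widehat{\mathfrak m}}(\widehat A)\) is indecomposable over \(\widehat A\). Since \(\widehat A\) is complete, it is a homomorphic image of a Gorenstein ring, and Corollary~\ref{cor_hochster} gives that \(\Spec(\widehat A/J(\widehat A))\) is connected in codimension \(1\).

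\textbf{Step 2: descend to \(\overline A\).} This is the main obstacle, and it is where universal catenarity enters. By Ratliff's theorem, \(A\) universally catenary implies that \(A/\mathfrak p\) is formally equidimensional for every prime \(\mathfrak p\). Hence for \(\mathfrak p\in\Assh A\), every minimal prime of \(\mathfrak p\widehat A\) has dimension \(d\). Also, \(J(A)\widehat A\) and \(J(\widehat A)\) have the same minimal primes, namely those of dimension \(d\).

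Now take \(\mathfrak p,\mathfrak q\in\Assh A\) and minimal primes \(P\supseteq\mathfrak p\widehat A\) and \(Q\supseteq\mathfrak q\widehat A\). Step 1 gives a chain \(P=P_0,\dots,P_s=Q\) in \(\Assh\widehat A\) with \(\operatorname{ht}(P_{i-1}+P_i)\le1\) in \(\widehat A/J(\widehat A)\). Contracting to \(A\) gives primes \(\mathfrak p_i=P_i\cap A\in\Assh A\), using going down for the flat map \(A\to\widehat A\) and the dimension count. A prime \(\mathfrak Q\supseteq P_{i-1}+P_i\) of height \(\le1\) contracts to a prime containing \(\mathfrak p_{i-1}+\mathfrak p_i\). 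Its height in \(\overline A\) is \(\le1\): by catenarity and equidimensionality, height \(=d-\dim A/(\mathfrak Q\cap A)\), and \(\dim\widehat A/\mathfrak Q\le\dim A/(\mathfrak Q\cap A)\). Checking this dimension/height comparison carefully is where the universally catenary hypothesis is indispensable. The conclusion is that \(\Spec\overline A\) is connected in codimension \(1\), so \(c(\overline A)\ge d-1\).

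\textbf{Step 3: Grothendieck.} Since \(\overline A\) is a quotient of \(A\), we have \(\operatorname{cd}(\mathfrak a\overline A,\overline A)\le\operatorname{cd}(\mathfrak a,A)\le d-2\). Also \(\operatorname{sdim}\overline A=d\). The connectedness theorem then gives
\[
c\bigl(\overline A/\mathfrak a\overline A\bigr)\ge\min\{d-1,d-1\}-(d-2)=1,
\]
so \(\Spec(A/(J(A)+\mathfrak a))\setminus\{\mathfrak m\}\) is connected. If \(A\) is equidimensional, then \(\sqrt{J(A)}=\sqrt0\), because \(J(A)\) is the intersection of the primary components at all minimal primes. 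Hence \(\Spec(A/\mathfrak a)\) and \(\Spec(A/(J(A)+\mathfrak a))\) have the same underlying space, which gives the final assertion.
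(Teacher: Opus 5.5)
Your Step~3 has a genuine gap. Grothendieck's connectedness theorem, both \cite[Thm.~19.2.10]{local cohomology} and its $\operatorname{cd}$-version \cite[Thm.~1.6]{varbaro}, is a statement about complete local rings. You apply it to $\overline A=A/J(A)$, which is not complete.

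This is not a formality. For non-complete rings the inequality $c(R/\mathfrak a)\ge\min\{c(R),\operatorname{sdim}R-1\}-\operatorname{ara}(\mathfrak a)$ is false, even for universally catenary local domains. Let $R$ be the local ring at the image point of $\mathbb A^4$ with the two points $0$ and $e_1$ identified. Then $R$ is a $4$-dimensional local domain essentially of finite type over a field, so $c(R)=4$. Take $\mathfrak a=(x_1(x_1-1),x_2)R$, so $\operatorname{ara}(\mathfrak a)\le 2$. The punctured spectrum of $R/\mathfrak a$ is two planes meeting only at the closed point, so $c(R/\mathfrak a)=0<\min\{4,3\}-2$. (Here $H^4_{\mathfrak m}(R)$ is decomposable, so the corollary itself is not contradicted.) The correct input to the theorem is $c(\widehat R)$, which is $0$ in this example, not $c(R)$.

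So the codimension-$1$ connectedness of $\Spec\overline A$ that you establish in Step~2 is not enough to feed Grothendieck's theorem. Step~2 is correct, but it is a detour that discards exactly the information you need.

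The repair is to stay on the completion, which is what the paper does.
\begin{enumerate}
\item[(1)] Replace $A$ by $\overline A$. Indecomposability of $H^d_{\mathfrak m}$ and the bound on $\operatorname{cd}$ both survive, since $\dim J(A)<d$.
\item[(2)] Your Ratliff/formal-catenarity argument shows that $\widehat{\overline A}$ is equidimensional of dimension $d$. Hence $\operatorname{sdim}\widehat{\overline A}=d$, and $\Spec\widehat{\overline A}$ has the same underlying space as $\Spec(\widehat A/J(\widehat A))$. Your Step~1 then gives $c(\widehat{\overline A})\ge d-1$.
\item[(3)] Apply the connectedness theorem to the complete ring $\widehat{\overline A}$. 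By flat base change, $\operatorname{cd}(\mathfrak a\widehat{\overline A},\widehat{\overline A})=\operatorname{cd}(\mathfrak a\overline A,\overline A)\le d-2$. This gives $c(\widehat{\overline A}/\mathfrak a\widehat{\overline A})\ge 1$.
\item[(4)] Descend. The punctured spectrum of $\widehat{\overline A}/\mathfrak a\widehat{\overline A}$ maps continuously onto that of $\overline A/\mathfrak a\overline A$, so $c(\overline A/\mathfrak a\overline A)\ge c(\widehat{\overline A}/\mathfrak a\widehat{\overline A})$, as in \cite[Lem.~19.3.1]{local cohomology}.
\end{enumerate}

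With this change Step~2 is unnecessary. Universal catenarity is used only to make the completion equidimensional, which fixes $\operatorname{sdim}$ and identifies its spectrum with that of $\widehat A/J(\widehat A)$. Your Step~1 and your final reduction for equidimensional $A$ are fine as written.
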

\begin{proof}
Since \(\dim J(A)<d\), we have
\(H^d_{\mathfrak m}(A)\cong H^d_{\mathfrak m}(A/J(A)).\)
	Thus the indecomposability assumption is preserved after replacing
	\(A\) by \(A/J(A)\). Moreover, 
	\(	\operatorname{cd}(\mathfrak a,A/J(A))\le \operatorname{cd}(\mathfrak a,A).\) Therefore we may assume that \(A\) is equidimensional of
	dimension \(d\).
	
	Since \(A\) is  
	universally catenary,  it is   formally catenary by \cite[Thm.~31.7]{matsumura}.  Therefore, for any minimal prime 	\(\mathfrak p\) of \(A\), \(\widehat A/\mathfrak p \widehat A\) is equidimensional of dimension $d$.
	
	Now let \(Q\) be a minimal prime of \(\widehat A\). By faithful flatness
	and going-down, \(Q\cap A=\mathfrak p\) is a minimal prime of \(A\), and
	\(Q\) is minimal over \(\mathfrak p\widehat A\). Since
	\(\widehat A/\mathfrak p\widehat A\) is equidimensional of dimension
	\(d\), we have
	\(	\dim \widehat A/Q=d.\)
    Thus \(\widehat A\) is equidimensional of dimension \(d\).

Moreover,  $	H^d_{\widehat{\mathfrak m}}(\widehat A)\cong H^d_{\mathfrak m}(A)$. Since \(H^d_{\mathfrak m}(A)\) is Artinian, decomposability is the same
over \(A\) and over \(\widehat A\). Hence
\(H^d_{\widehat{\mathfrak m}}(\widehat A)\) is indecomposable.
  By
	Theorem~\ref{hochster} or \ref{canonical_M}, \(\Spec \widehat A\) is connected in codimension \(1\);
	equivalently,
	\(	c(\widehat A)\ge d-1.\)
	Also \(\operatorname{sdim}\widehat A=d\). By Grothendieck's connectedness theorem in the form
	\cite[Thm.~1.6]{varbaro}, together with
	\cite[Lem.~19.3.1]{local cohomology}, we have
	\[
		c(  A/\mathfrak a )\ge 
 	c(\widehat A/\mathfrak a\widehat A)
	\ge
	\min\{c(\widehat A),\operatorname{sdim}\widehat A-1\}
	-
	\operatorname{cd}(\mathfrak a\widehat A,\widehat A).
	\]
	Since
	\(	\operatorname{cd}(\mathfrak a\widehat A,\widehat A)
	=
	\operatorname{cd}(\mathfrak a,A)
	\le d-2,\)
	we get
\(	c(  A/\mathfrak a )\ge d-1-(d-2)=1.\)
Thus \(\Spec(A/\mathfrak a)\setminus\{\mathfrak m\}\) is connected.
\end{proof}

\begin{remark}\label{remark:section_functor_connectedness}
	One mechanism underlying the connectedness and structure theorems for
	\(S_2\)-sheaves is the extension property. For an \(S_2\)-sheaf this
	extension is rigid across subsets of codimension at least \(2\), but even for
	a general coherent sheaf the extension \(j_*(\mathcal F|_U)\) can detect
	connectedness of \(U\).

	Let \(X\) be a Noetherian scheme, let \(\mathcal F\) be a coherent sheaf on
	\(X\), and let \(U\subseteq \Supp\mathcal F\) be an open subset. Write
	\(	U=U_1\sqcup\cdots\sqcup U_r\)
	for the decomposition into connected components. Let
	\(j:U\hookrightarrow X\)  and \(j_i: U_i\hookrightarrow X\)   be the inclusions.
	Then
	\(	j_*(\mathcal F|_U)
	\cong
	\bigoplus_{i=1}^r (j_i)_{ *}(\mathcal F|_{U_i}).\)
	In particular, if \(j_*(\mathcal F|_U)\) is indecomposable, then \(U\) is
	connected.
	
Conversely, the same localization-and-support argument as in
Theorem~\ref{sheaf_indecomposable} gives the following criteria.
	\begin{enumerate}
		\item[(a)] If \(\mathcal F_x\) is indecomposable as an
		\(\mathcal O_{X,x}\)-module for every \(x\in U\), then connectedness of
		\(U\) implies that \(j_*(\mathcal F|_U)\) is indecomposable.
		
		\item[(b)] Suppose that every associated point of \(\mathcal F|_U\) has
		codimension at most \(k\) in \(U\). If \(\mathcal F_x\) is indecomposable
		for every \(x\in U\) with \(\operatorname{codim}_U(x)\le k\), then
		connectedness of \(U\) in codimension \(k\) implies that
		\(j_*(\mathcal F|_U)\) is indecomposable.
	\end{enumerate}
	
For \(X=\Spec A\), \(U=X\setminus V(I)\), and \(\mathcal F=\widetilde M\), this
recovers the ideal-transform viewpoint  in \cite{connectedness}
\[
D_I(M)\cong \Gamma(U,\widetilde M)
\cong \Gamma(X,j_*(\widetilde M|_U)).
\]

	The preceding sheaf-theoretic criteria also have a formal analogue.  
Namely, if \(\mathfrak X\) is a Noetherian formal scheme, \(\mathfrak F\) is a
coherent \(\mathcal O_{\mathfrak X}\)-module, and
\(U\subseteq \Supp \mathfrak F\) is open, then analogous statements apply to
the extension \(j_*(\mathfrak F|_U)\), where \(j:U\hookrightarrow\mathfrak X\)
is the inclusion.
In the affine case one  may take  \(\mathfrak X\) to be the formal completion of \(\Spec A\) along \(V(I)\), where 
\((A,\mathfrak m)\) is a local ring and \(I\subseteq A\) is an ideal.  
 For a finite \(A\)-module \(M\), let
\( {\mathcal M}^\Delta\) denote the completion of the coherent sheaf $\widetilde{M}$ on \(\mathfrak X\).  If
\(U:=\Spec A\setminus\{\mathfrak m\},
W:=\Supp_A(M/IM)\cap U,\) then the formal transform
\[
D^I(M)
:=
\varprojlim_\alpha D_{\mathfrak m}(M/I^\alpha M)
\cong
\varprojlim_\alpha
\Gamma(U,\widetilde{M/I^\alpha M}) \cong
\varprojlim_\alpha
\Gamma\bigl(W,(\widetilde{M/I^\alpha M})|_W\bigr) 
\]
may be viewed as the module of sections of \(\mathcal M^\Delta\) on the open
subset \(U\cap V(I)\) of the formal scheme \(\mathfrak X\).  
 If
\(W=W_1\sqcup\cdots\sqcup W_r\)
is the decomposition into connected components, then\[
D^I(M)
\cong
\bigoplus_{i=1}^r
\Gamma(W_i,\mathcal M^\Delta|_{W })
\cong
\bigoplus_{i=1}^r
\varprojlim_\alpha
\Gamma\bigl(W_i,(\widetilde{M/I^\alpha M})|_{W }\bigr).
\]  If each component contributes a nonzero section module, then
indecomposability of \(D^I(M)\) forces \(W\) to be connected. Unlike the case of
sections of a quasi-coherent sheaf on an affine scheme, this nonvanishing
condition is not guaranteed, since \(D^I(M)\) is an inverse limit of section
modules. For \(M=A\), the condition is automatic:
each component carries the compatible identity section. In particular, when
\(M=A\) and \(A\) is complete local, Schenzel proves that
indecomposability of \(D^I(A)\) as a ring is equivalent to the connectedness
of \(V(I)\setminus\{\mathfrak m\}\); see
\cite[Thm.~8.2]{connectedness}.

Thus the ideal-transform and formal-transform connectedness criteria are
instances of the same sheaf-extension principle, although passing to modules of
sections can behave differently in the two settings. The special feature of
\(S_2\)-sheaves is that the extension property identifies the original finite
object itself with such an extension across all complements of  subsets of codimension at least \(2\).
\end{remark}

\section{Deficiency Modules and the Non-$S_2$ Locus}
In this section, we  continue with the set-up of Section \ref{set-up_non-S_2}. Thus
\(0\to A\to B\to C\to 0\)
is the \(S_2\)-ification sequence. Assume \(C\neq 0\), and let \(t=\dim C\).

We study the relationship between the deficiency module \(K^{t+1}(A)\) and
the canonical module \(K^t(C)\) of the \(S_2\)-defect module \(C\). We show
that \(K^t(C)\) is the \(S_2\)-hull of \(K^{t+1}(A)\), and discuss when the
natural map between them is an isomorphism, both globally and locally. We
conclude with examples.

The first step is to compare the top-dimensional support of \(K^{t+1}(A)\)
with that of \(C\).

\begin{proposition}\label{prop:assh-equality}
	Let \((A,\mathfrak m)\) be a local ring that is a homomorphic image of
	a Gorenstein local ring \(R\), and assume \(J(A)=0\). Let \(C\) be as in
	\eqref{exact_ABC}. Assume \(C\neq 0\), and let \(t=\dim C\).  Then
	\[
	\Assh_A K^{t+1}(A)=\Assh_A C.
	\]
\end{proposition}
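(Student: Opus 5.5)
The plan is to apply \(\Ext_R^{\bullet}(-,R)\) to the \(S_2\)-ification sequence \eqref{exact_ABC} and compare the two modules \(K^{t+1}(A)\) and \(K^t(C)\) up to modules of dimension less than \(t\).

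\textbf{Step 1: the exact sequence.} Let \(n=\dim R\) and \(d=\dim A\). Since \(\operatorname{ht}(\ann_A C)\ge 2\) and \(A\) is equidimensional, we have \(t\le d-2\). The long exact Ext sequence contains
\[
K^{t+1}(C)\to K^{t+1}(B)\to K^{t+1}(A)\xrightarrow{\ \varphi\ } K^{t}(C)\to K^{t}(B).
\]
Proposition~\ref{prop_canonical_module}(a) gives \(K^{t+1}(C)=0\), because \(t+1>\dim C\). The ring \(B\) is a finitely generated \(S_2\) \(A\)-module of dimension \(d\). Since \(t<t+1<d\), Proposition~\ref{prop_canonical_module}(c) with \(r=2\) yields
\[
\dim K^{t+1}(B)\le t-1,\qquad \dim K^t(B)\le t-2.
\]
Hence \(\ker\varphi\) has dimension at most \(t-1\) and \(\operatorname{coker}\varphi\) has dimension at most \(t-2\).

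\textbf{Step 2: bounding the support of \(K^{t+1}(A)\).} The sequence gives
\[
\Supp K^{t+1}(A)\subseteq \Supp K^{t+1}(B)\cup\Supp K^t(C).
\]
By Step 1, \(\dim K^{t+1}(B)\le t-1\), and by Proposition~\ref{prop_canonical_module}(a), \(\dim K^t(C)=t\). Therefore \(\dim K^{t+1}(A)\le t\). Note that Proposition~\ref{prop_canonical_module}(a) alone only gives the weaker bound \(t+1\); this support argument is what removes primes \(\mathfrak p\) with \(\dim A/\mathfrak p=t+1\).

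\textbf{Step 3: comparing primes of dimension \(t\).} Let \(\mathfrak p\) be a prime with \(\dim A/\mathfrak p=t\). Then \(\mathfrak p\) lies outside the supports of \(K^{t+1}(B)\), \(K^t(B)\), \(\ker\varphi\) and \(\operatorname{coker}\varphi\), so localizing shows that \(\varphi_{\mathfrak p}\) is an isomorphism. Consequently the primes of dimension \(t\) in \(\Supp K^{t+1}(A)\) are exactly those in \(\Supp K^t(C)\). In particular \(\dim K^{t+1}(A)=t\), since \(K^t(C)\) does have primes of dimension \(t\) in its support.

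\textbf{Step 4: conclusion.} A prime of dimension equal to the dimension of a module is minimal in its support, hence associated. So \(\Assh_A K^{t+1}(A)\) is the set of dimension-\(t\) primes of \(\Supp K^{t+1}(A)\), which by Step 3 is the set of dimension-\(t\) primes of \(\Supp K^t(C)\), namely \(\Assh_A K^t(C)\). Finally, Proposition~\ref{prop_canonical_module}(b) gives \(\Ass_A K^t(C)=\Assh_A C\), so \(\Assh_A K^t(C)=\Assh_A C\), which proves the statement.

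The main obstacle is Step 2, that is, ruling out associated primes of \(K^{t+1}(A)\) of dimension \(t+1\). It is handled entirely by the \(S_2\)-vanishing \(\dim K^{t+1}(B)\le t-1\) combined with \(K^{t+1}(C)=0\), so the real input is the \(S_2\) property of \(B\) through Proposition~\ref{prop_canonical_module}(c).
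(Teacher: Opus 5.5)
Your proof is correct and follows the paper's argument: the paper uses the same four-term sequence $K^{t+1}(B)\to K^{t+1}(A)\xrightarrow{\varphi}K^t(C)\to K^t(B)$, shows that $\varphi_{\mathfrak q}$ is an isomorphism whenever $\dim A/\mathfrak q\ge t$, and then compares top-dimensional supports. The only difference is that the paper obtains the vanishing of $K^{t+1}(B)_{\mathfrak q}$ and $K^t(B)_{\mathfrak q}$ by localizing and applying local duality, whereas you cite Proposition~\ref{prop_canonical_module}(c) directly, which is exactly the shortcut the paper records in the remark after its proof; also, the vanishing $K^{t+1}(C)=0$ you mention is true but not needed, and it is not literally part of Proposition~\ref{prop_canonical_module}(a) as stated.
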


\begin{proof}
Applying $\Ext_R^{n-\bullet}(-,R)$ to the short exact sequence \eqref{exact_ABC}, we obtain an exact sequence
\[
K^{t+1}(B)\longrightarrow K^{t+1}(A)\xrightarrow{\varphi} K^t(C)\longrightarrow K^t(B).
\]

 For any prime
\(\mathfrak q\in \Spec A\), we have
\begin{equation}\label{s_2_dim}
	\bigl(K^{j}(B)\bigr)_{\mathfrak q}
	\;\cong\;
	\Ext^{\,n-j}_{R_{\mathfrak q}}\!\left(B_{\mathfrak q}, R_{\mathfrak q}\right)
	\;\cong\;
	K^{\dim R_{\mathfrak q}-(n-j)}_{A_{\mathfrak q}}\!\left(B_{\mathfrak q}\right)
	\;=\;
	K^{\,j-\dim A/\mathfrak q}_{A_{\mathfrak q}}\!\left(B_{\mathfrak q}\right),
\end{equation}
where we use
\(\dim A/\mathfrak q
=
\dim R/\mathfrak q
=
n-\dim R_{\mathfrak q} \)  since $R$ is Cohen--Macaulay.

Since \(B\) is \(S_2\), local duality gives
\[
K^i_{A_{\mathfrak q}}(B_{\mathfrak q})=0
\qquad\text{for } i<\min\{2,\dim B_{\mathfrak q}\}.
\]
Also \(\Supp_A B=\Spec A\), so
\(\dim B_{\mathfrak q}=\dim A_{\mathfrak q}\).
Since \(A\) is a quotient of a Gorenstein local ring, it is universally
catenary. Moreover,   \(J(A)=0\) implies that \(A\) is
 equidimensional, thus
\[
\dim B_{\mathfrak q}
=
\dim A_{\mathfrak q}
=
\dim A-\dim A/\mathfrak q.
\]
Let \(j=t,t+1\), and suppose \(\dim A/\mathfrak q\ge t\), then
\(j-\dim A/\mathfrak q<2. \)
Also, since \(t\le \dim A-2\), we have
\[
j-\dim A/\mathfrak q
\le
t+1-\dim A/\mathfrak q
\le
\dim A-\dim A/\mathfrak q-1
=
\dim B_{\mathfrak q}-1.
\]
Thus the right-hand side of \eqref{s_2_dim} vanishes for \(j=t,t+1\).
Hence, for every \(\mathfrak q\) with \(\dim A/\mathfrak q\ge t\),
\begin{equation}\label{localization_K_t}
	\bigl(K^{t+1}(B)\bigr)_{\mathfrak q}
	=
	\bigl(K^t(B)\bigr)_{\mathfrak q}
	=0.
\end{equation}
Therefore \(\varphi_{\mathfrak q}\) is an isomorphism
\(\bigl(K^{t+1}(A)\bigr)_{\mathfrak q}
\cong
\bigl(K^t(C)\bigr)_{\mathfrak q}\)
for all such \(\mathfrak q\).
 
Since \(K^t(C)\) is the canonical module of \(C\), by Proposition \ref{prop_canonical_module},  
\[
\dim K^t(C)=\dim C=t
\qquad\text{and}\qquad
\Ass_A K^t(C)=\Assh_A C.
\]
The preceding   isomorphism shows that \(K^{t+1}(A)\) and
\(K^t(C)\) have the same support in dimensions at least \(t\). This implies that 
\(K^{t+1}(A)\) has dimension \(t\), and its top dimensional support agrees
with that of \(K^t(C)\). Therefore
\[
\Assh_A K^{t+1}(A)
=
\Assh_A K^t(C)
=
\Assh_A C.
\]
\end{proof}
\begin{remark}
	The vanishing \eqref{localization_K_t} also follows immediately from
	Proposition~\ref{prop_canonical_module}(c), applied to the \(S_2\)-module
	\(B\):
	\[
	\dim K^{t+1}(B)\le t-1
	\quad\text{and}\quad
	\dim K^t(B)\le t-2.
	\]
 We included the direct proof
above for completeness.
\end{remark}

\begin{theorem}\label{thm_S_2hull}
Let \((A,\mathfrak m)\) be a local ring that is a homomorphic image of a
Gorenstein local ring, and assume that \(J(A)=0\). Let \(B\) and \(C\) be as in
\eqref{exact_ABC}. Assume \(C\neq 0\), and let \(t=\dim C\). 
Then the connecting morphism
\[
\varphi\colon K^{t+1}(A)\longrightarrow K^t(C)
\]
arising from \eqref{exact_ABC} is the \(S_2\)-hull of \(K^{t+1}(A)\) in the
sense of \cite[Def.\ 9.3]{Kollar2023}. In particular,
\[
K^t(C)\cong K_{K_{K^{t+1}(A)}}.
\]
Moreover, for every ideal \(I\) such that
\(\operatorname{ann}_A K^t(B)
\subseteq I
\subseteq
\operatorname{ann}_A(\operatorname{coker}\varphi),\)
there is an isomorphism
\[
K^t(C)\cong D_I\bigl(\overline{K^{t+1}(A)}\bigr).
\]
\end{theorem}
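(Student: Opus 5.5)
From the long exact sequence of deficiency modules attached to \eqref{exact_ABC},
\[
K^{t+1}(B)\xrightarrow{\ \psi\ } K^{t+1}(A)\xrightarrow{\ \varphi\ } K^t(C)\longrightarrow K^t(B),
\]
we know $\ker\varphi=\operatorname{im}\psi$ is a quotient of $K^{t+1}(B)$. Hence $\dim\ker\varphi\le t-1$ by Proposition~\ref{prop_canonical_module}(c) applied to the $S_2$-module $B$ (as in the remark after Proposition~\ref{prop:assh-equality}). Likewise $\operatorname{coker}\varphi$ embeds in $K^t(B)$, so $\dim\operatorname{coker}\varphi\le t-2$. Kollár's definition of the $S_2$-hull of a module $N$ of dimension $t$ asks for a map $N\to N'$ with $N'$ an $S_2$-module of dimension $t$, whose kernel is the lower-dimensional torsion $J(N)$ and whose cokernel is supported in codimension $\ge 2$ in $\Supp N'$. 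The target $K^t(C)$ is the canonical module of $C$, so by Proposition~\ref{prop_canonical_module}(a),(b) it is $S_2$, of dimension $t$, equidimensional and unmixed. So the plan is to check these three conditions for $\varphi$ and then use uniqueness of the $S_2$-hull.

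\textbf{Step 1: the kernel.} Since $K^t(C)$ is unmixed of dimension $t$, every submodule of $K^{t+1}(A)$ of dimension $<t$ maps to zero under $\varphi$, so $J(K^{t+1}(A))\subseteq\ker\varphi$. Conversely $\dim\ker\varphi\le t-1$ gives $\ker\varphi\subseteq J(K^{t+1}(A))$. Hence $\ker\varphi=J(K^{t+1}(A))$, and $\varphi$ induces an injection $\overline{K^{t+1}(A)}\hookrightarrow K^t(C)$.

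\textbf{Step 2: the cokernel.} $\Supp K^t(C)=\Supp C$ is equidimensional of dimension $t$ and $\dim\operatorname{coker}\varphi\le t-2$. Using that $A$ is catenary, this gives $\operatorname{codim}_{\Supp C}(\Supp\operatorname{coker}\varphi)\ge 2$. This is the step I expect to need the most care, since dimension must be converted into codimension inside $\Supp C$; equidimensionality plus catenarity does this.

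\textbf{Step 3: identification with $K_{K_{K^{t+1}(A)}}$.} Set $N=\overline{K^{t+1}(A)}$. Then $K_N=K_{K^{t+1}(A)}$, and by \cite[Prop.~3.3]{canonical_module}, $N\hookrightarrow K_{K_N}$ has cokernel of dimension $\le t-2$. Apply $K^t(-)=\operatorname{Ext}_R^{n-t}(-,R)$ to
\[
0\to N\to K^t(C)\to\operatorname{coker}\varphi\to0 .
\]
Because $\dim\operatorname{coker}\varphi\le t-2$, we have $K^t(\operatorname{coker}\varphi)=K^{t-1}(\operatorname{coker}\varphi)=0$, so $K_{K^t(C)}\cong K_N$. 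Dualizing once more and using biduality for the unmixed equidimensional $S_2$ module $K^t(C)$ \cite[Prop.~3.2]{canonical_module} gives
\[
K^t(C)\cong K_{K_{K^t(C)}}\cong K_{K_N}.
\]
One checks this isomorphism is compatible with $\varphi$, which makes $\varphi$ the $S_2$-hull.

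\textbf{Step 4: the ideal-transform description.} Let $I$ satisfy the stated inclusions. Then $V(I)\supseteq\Supp\operatorname{coker}\varphi$, and $V(I)\subseteq V(\operatorname{ann}K^t(B))$, which has dimension $\le t-2$; so $V(I)\cap\Supp C$ has codimension $\ge2$ in $\Supp C$. By Lemma~\ref{lemma} (through its proof via \eqref{delign}), $K^t(C)\cong D_I(K^t(C))$. Since $D_I$ kills modules supported in $V(I)$ and $H^1_I$ of such modules vanishes, applying $D_I$ to $0\to N\to K^t(C)\to\operatorname{coker}\varphi\to0$ yields $D_I(N)\cong D_I(K^t(C))\cong K^t(C)$.
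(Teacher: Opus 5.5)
Your proof is correct. Steps 1, 2 and 4 run parallel to the paper; the real difference is in Step 3. The paper never dualizes $0\to N\to K^t(C)\to\operatorname{coker}\varphi\to 0$. Having checked the hull axioms for $\varphi$, it notes that the biduality map $\tau_{K^{t+1}(A)}$ also satisfies them (by \cite[Prop.~3.2, 3.3]{canonical_module}), and it gets $K^t(C)\cong K_{K_{K^{t+1}(A)}}$ from uniqueness of $S_2$-hulls. You instead get $K_{K^t(C)}\cong K_N$ from $K^t(\operatorname{coker}\varphi)=K^{t-1}(\operatorname{coker}\varphi)=0$, and then apply biduality to the $S_2$ module $K^t(C)$. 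Your route is self-contained, since it proves the needed instance of uniqueness rather than citing it. Also, by naturality of $\tau$ it identifies $\varphi$ with $\tau_{K^{t+1}(A)}$ itself, not just the two targets abstractly. The paper's route is shorter once uniqueness is granted. In Step 4, left exactness of $D_I$ together with $D_I(\operatorname{coker}\varphi)=0$ is the algebraic counterpart of the paper's restriction of sheaves to $\Spec A\setminus V(I)$.

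Two small repairs are needed.

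First, state $\dim K^{t+1}(A)=t$ before Step 1. Both the meaning of $J(K^{t+1}(A))$ in Step 1 and the identification $K_{K^{t+1}(A)}=K^t(K^{t+1}(A))$ in Step 3 depend on it. It follows from Proposition~\ref{prop:assh-equality}, or directly from your sequence, since $\operatorname{im}\varphi$ has dimension $t$ and $\dim\ker\varphi\le t-1$.

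Second, $\Supp K^t(C)$ is not $\Supp C$. Since $\Ass K^t(C)=\Assh C$, it is the top-dimensional part $\bigcup_{\mathfrak p\in\Assh C}V(\mathfrak p)$, and the non-$S_2$ locus may have lower-dimensional components. So the codimension estimates in Steps 2 and 4 should be made inside $\Supp K^t(C)$. That set is equidimensional of dimension $t$, and it is exactly where Lemma~\ref{lemma} measures codimension. With this change your dimension-plus-catenarity argument goes through unchanged.
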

\begin{proof}
Applying \(\Ext_R^{n-\bullet}(-,R)\) to \eqref{exact_ABC}, we obtain an exact
sequence
\[
K^{t+1}(B)\longrightarrow K^{t+1}(A)
\xrightarrow{\varphi} K^t(C)\longrightarrow K^t(B).
\]
Since \(B\) is \(S_2\), Proposition~\ref{prop_canonical_module}(c) gives
\[
\dim K^{t+1}(B)\le t-1
\qquad\text{and}\qquad
\dim K^t(B)\le t-2.
\]
Hence
\[
\dim \ker\varphi\le t-1
\qquad\text{and}\qquad
\dim \operatorname{coker}\varphi\le t-2.
\]
On the other hand, \(K^t(C)\) is the canonical module of \(C\). Thus, by
Proposition~\ref{prop_canonical_module}(b), \(K^t(C)\) is \(S_2\),
equidimensional, and unmixed. Moreover, by Proposition~\ref{prop:assh-equality},
\[
\dim K^{t+1}(A)=\dim K^t(C)=\dim C=t.
\]
Therefore \(\varphi\colon K^{t+1}(A)\to K^t(C)\) satisfies the defining
properties of the \(S_2\)-hull of \(K^{t+1}(A)\).

On the other hand, by \cite[Prop.~3.2, 3.3]{canonical_module}, the canonical biduality map
\(\tau_{K^{t+1}(A)}\colon K^{t+1}(A)\to K_{K_{K^{t+1}(A)}}\)
  also satisfies the defining
  properties of the  \(S_2\)-hull of \(K^{t+1}(A)\). By uniqueness of the \(S_2\)-hull, there is an isomorphism
\[
K^t(C)\cong K_{K_{K^{t+1}(A)}}.
\]

Before proving the ideal-transform description, we note the following. Since $K^t(C)$  has no nonzero submodule of dimension $<t$, every submodule of \(K^{t+1}(A)\) of dimension \(<t\) is contained in \(\ker\varphi\). It follows that \(\ker\varphi\) is the maximal
submodule of \(K^{t+1}(A)\) of dimension \(<t\). Hence \(K^{t+1}(A)/\ker\varphi \cong \overline{K^{t+1}(A)}\), and \(\varphi\) induces an exact sequence 
\[ 0\to  \overline{K^{t+1}(A)} \xrightarrow{  \varphi } K^t(C)\to Q\to 0, \] 
where $Q:=\operatorname{coker}\varphi$ and $\dim Q\le t-2$.

Set
\[
M:=\overline{K^{t+1}(A)},
\qquad
J:=\operatorname{ann}_A Q,
\qquad
U:=\Spec A\setminus V(J),
\]
and let \(j:U\hookrightarrow \Spec A\) be the inclusion. Since \(\Supp Q=V(J)\), we have \(\widetilde Q|_U=0\). Therefore
\[
\widetilde{M}|_U\cong \widetilde{K^t(C)}|_U.
\]

The \(S_2\)-extension property as in Lemma~\ref{lemma} gives  
\[
\widetilde{K^t(C)}
\cong
j_*\bigl(\widetilde{K^t(C)}|_U\bigr)
\cong
j_*\bigl(\widetilde{M}|_U\bigr).
\]
Taking global sections yields
\[
K^t(C)\cong \Gamma(U,\widetilde{M})\cong D_J(M).
\]

Now let \(I\) be an ideal such that
\[
\operatorname{ann}_A K^t(B)\subseteq I\subseteq J.
\]
Then \(V(J)\subseteq V(I)\) and 
\(\dim A/I\le \dim K^t(B)\le t-2\).
Applying the same \(S_2\)-extension argument to
\(\Spec A\setminus V(I)\), we obtain
\[
K^t(C)\cong D_I(M)
=
D_I\bigl(\overline{K^{t+1}(A)}\bigr).
\]
\end{proof}
Before turning to examples, we record two interpretations of the comparison map
\(\varphi\).
\begin{remark}[Spectral sequence interpretation of \(\varphi\)]
	Let \(I=\operatorname{ann}_A C\). Since \(\operatorname{ht} I\ge 2\) and
	\(J(A)=0\), we have \(H_I^0(A)=0\), and the exact sequence
	\[
	0\to H_I^0(A)\to A\to D_I(A)\cong B\to H_I^1(A)\to 0
	\]
	gives \(C\cong H_I^1(A)\). Hence the Grothendieck spectral sequence \cite[5.8.3]{weibel}
	\[
	E_2^{p,q}=H_{\mathfrak m}^p(H_I^q(A))
	\Longrightarrow H_{\mathfrak m}^{p+q}(A)
	\]
	has an edge morphism
	\[
	H_{\mathfrak m}^t(C)\longrightarrow H_{\mathfrak m}^{t+1}(A),
	\]
	whose Matlis dual is the map
	\[
	\varphi\colon K^{t+1}(A)\to K^t(C).
	\]
\end{remark}
 
 The second interpretation is local and relates the comparison maps to the local depth
 of the \(S_2\)-ification.
\begin{proposition}\label{prop:stratified-comparison}
	Let \((A,\mathfrak m)\) be a local ring that is a homomorphic image of a
	Gorenstein local ring. Assume that \(J(A)=0\). Assume \(C\neq 0\), and let \(t=\dim C\). Let
	\(\mathfrak p\in \Supp^{\mathrm{top}}(C)\), and suppose that
	\[
	\dim A/\mathfrak p=t-s
	\]
	for some integer \(s\ge 0\). Then the following are equivalent:
	\begin{enumerate}
		\item[(a)]  \(\depth B_{\mathfrak p}\ge s+2\);
		\item[(b)] the connecting maps induce isomorphisms
\[ 	H^i_{\mathfrak pA_{\mathfrak p}}(C_{\mathfrak p})
\cong
H^{i+1}_{\mathfrak pA_{\mathfrak p}}(A_{\mathfrak p})
\qquad\text{for all }0\le i\le s;\]
		\item[(c)] the localized connecting maps
		\[
		\bigl(K^{t-s+i+1}(A)\bigr)_{\mathfrak p}
		\longrightarrow
		\bigl(K^{t-s+i}(C)\bigr)_{\mathfrak p}
		\]
		are isomorphisms for all \(0\le i\le s\).
	\end{enumerate}
\end{proposition}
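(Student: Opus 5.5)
The plan is to localize at \(\mathfrak p\) and work over the local ring \(A_{\mathfrak p}\), which is a homomorphic image of the Gorenstein ring \(R_{\mathfrak p}\), so that local duality is available there.

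\textbf{Step 1: dimensions after localizing.} Since \(A\) is equidimensional and catenary, \(\dim A_{\mathfrak p}=\dim A-(t-s)\). Because \(\mathfrak p\in\Supp^{\mathrm{top}}(C)\), it contains a prime \(\mathfrak q\in\Assh C\) with \(\dim A/\mathfrak q=t\), and catenarity gives \(\dim C_{\mathfrak p}=s\). The sequence \(0\to A_{\mathfrak p}\to B_{\mathfrak p}\to C_{\mathfrak p}\to0\) gives a long exact sequence of local cohomology at \(\mathfrak pA_{\mathfrak p}\):
\[
\cdots\to H^i(B_{\mathfrak p})\to H^i(C_{\mathfrak p})\xrightarrow{\delta_i} H^{i+1}(A_{\mathfrak p})\to H^{i+1}(B_{\mathfrak p})\to\cdots .
\]

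\textbf{(a)\(\Rightarrow\)(b).} If \(\depth B_{\mathfrak p}\ge s+2\), then \(H^j(B_{\mathfrak p})=0\) for \(j\le s+1\). Hence \(\delta_i\) is an isomorphism for \(0\le i\le s\).

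\textbf{(b)\(\Rightarrow\)(a).} This is the step I expect to be the main obstacle. Suppose every \(\delta_i\) with \(0\le i\le s\) is an isomorphism. Injectivity of \(\delta_i\) forces \(H^i(B_{\mathfrak p})\to H^i(C_{\mathfrak p})\) to be zero. Surjectivity of \(\delta_{i-1}\) forces \(H^i(A_{\mathfrak p})\to H^i(B_{\mathfrak p})\) to be zero. By exactness, \(H^i(B_{\mathfrak p})=0\) for \(1\le i\le s\).

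The endpoints need separate arguments. For \(i=0\), \(H^0(B_{\mathfrak p})=0\) because \(B\) is \(S_2\) and \(\dim B_{\mathfrak p}=\dim A_{\mathfrak p}\ge 2\) (recall \(\operatorname{ht}\operatorname{ann}C\ge2\)). For \(i=s+1\), surjectivity of \(\delta_s\) makes \(H^{s+1}(A_{\mathfrak p})\to H^{s+1}(B_{\mathfrak p})\) zero, so \(H^{s+1}(B_{\mathfrak p})\) injects into \(H^{s+1}(C_{\mathfrak p})\), which vanishes by Grothendieck vanishing since \(\dim C_{\mathfrak p}=s\). Hence \(H^j(B_{\mathfrak p})=0\) for all \(j\le s+1\), which gives \(\depth B_{\mathfrak p}\ge s+2\). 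It is essential that \(\dim C_{\mathfrak p}=s\) exactly; this is where the hypothesis \(\mathfrak p\in\Supp^{\mathrm{top}}(C)\) is used.

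\textbf{(b)\(\Leftrightarrow\)(c).} Use the localization formula \eqref{s_2_dim} from the proof of Proposition~\ref{prop:assh-equality}:
\[
K^{j}(M)_{\mathfrak p}\cong K^{\,j-\dim A/\mathfrak p}_{A_{\mathfrak p}}(M_{\mathfrak p}) = K^{\,j-t+s}_{A_{\mathfrak p}}(M_{\mathfrak p}).
\]
With \(j=t-s+i+1\) and \(j=t-s+i\), the localized connecting map of (c) becomes
\[
K^{i+1}_{A_{\mathfrak p}}(A_{\mathfrak p})\to K^{i}_{A_{\mathfrak p}}(C_{\mathfrak p}).
\]
Both maps come from the long exact \(\Ext_{R_{\mathfrak p}}(-,R_{\mathfrak p})\) sequence of the same short exact sequence. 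Local duality over \(A_{\mathfrak p}\) identifies this map with the Matlis dual of \(\delta_i\), up to sign, by naturality of local duality with respect to connecting morphisms. Matlis duality is faithful on these modules (finitely generated on one side, Artinian on the other), so one map is an isomorphism exactly when its dual is. This gives (b)\(\Leftrightarrow\)(c).
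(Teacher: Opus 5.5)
Your proposal is correct and follows essentially the same route as the paper: the long exact local cohomology sequence of \(0\to A_{\mathfrak p}\to B_{\mathfrak p}\to C_{\mathfrak p}\to 0\), vanishing of \(H^{s+1}_{\mathfrak pA_{\mathfrak p}}(C_{\mathfrak p})\) at the top end, and local duality combined with the localization formula for deficiency modules for (b)\(\Leftrightarrow\)(c). The differences are minor. You kill \(H^1_{\mathfrak pA_{\mathfrak p}}(B_{\mathfrak p})\) using \(\delta_0\) and \(\delta_1\), whereas the paper uses the \(S_2\) condition. Also, in the top-degree step only the bound \(\dim C_{\mathfrak p}\le s\) is used, and that bound holds automatically, so equality is not essential there.
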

\begin{proof}
	Since \(\mathfrak p\in \Supp^{\mathrm{top}}(C)\) and
	\(\dim A/\mathfrak p=t-s\), we have \(\dim C_{\mathfrak p}=s\). Localizing
	\eqref{exact_ABC} at \(\mathfrak p\), we get
	\[
	0\to A_{\mathfrak p}\to B_{\mathfrak p}\to C_{\mathfrak p}\to 0.
	\]
	The associated long exact sequence of local cohomology gives an exact sequence 
	\[
	\cdots\to
	H^i_{\mathfrak pA_{\mathfrak p}}(B_{\mathfrak p})
	\to
	H^i_{\mathfrak pA_{\mathfrak p}}(C_{\mathfrak p})
	\to
	H^{i+1}_{\mathfrak pA_{\mathfrak p}}(A_{\mathfrak p})
	\to
	H^{i+1}_{\mathfrak pA_{\mathfrak p}}(B_{\mathfrak p})
	\to\cdots .
	\]
	Thus \(\depth B_{\mathfrak p}\ge s+2\)  implies the stated isomorphisms  for
	\(0\le i\le s\) in (b).
	
	Conversely, assume that these connecting maps in (b) are isomorphisms for
	\(0\le i\le s\). Since \(B_{\mathfrak p}\) is \(S_2\) and
	\[
	\dim B_{\mathfrak p}=\dim A_{\mathfrak p}=d-(t-s)\ge s+2,
	\]
	we have
	\begin{equation}\label{B_p_S_2}
			H^0_{\mathfrak pA_{\mathfrak p}}(B_{\mathfrak p})
		=
		H^1_{\mathfrak pA_{\mathfrak p}}(B_{\mathfrak p})=0.
	\end{equation}
	For \(2\le j\le s\), the vanishing
	\(	H^j_{\mathfrak pA_{\mathfrak p}}(B_{\mathfrak p})=0\)
	follows from the isomorphisms for \(i=j-1\) and \(i=j\), together with the
	long exact sequence.

	Finally, since \(\dim C_{\mathfrak p}=s\), we have \(H^{s+1}_{\mathfrak pA_{\mathfrak p}}(C_{\mathfrak p})=0\).  The isomorphism for \(i=s\) then gives 
	\(	H^{s+1}_{\mathfrak pA_{\mathfrak p}}(B_{\mathfrak p})=0.\)
	Hence \(\depth B_{\mathfrak p}\ge s+2\).
	
	The equivalence of (b) with the deficiency-module formulation (c) follows by local
	duality. Indeed,
	\[
	H^i_{\mathfrak pA_{\mathfrak p}}(C_{\mathfrak p})^\vee
	\cong
	K^i_{A_{\mathfrak p}}(C_{\mathfrak p}),
	\qquad
	H^{i+1}_{\mathfrak pA_{\mathfrak p}}(A_{\mathfrak p})^\vee
	\cong
	K^{i+1}_{A_{\mathfrak p}}(A_{\mathfrak p}),
	\]
	and, since \(\dim A/\mathfrak p=t-s\),
	\[
	\bigl(K^{t-s+i}(C)\bigr)_{\mathfrak p}
	\cong
	K^i_{A_{\mathfrak p}}(C_{\mathfrak p}),
	\qquad
	\bigl(K^{t-s+i+1}(A)\bigr)_{\mathfrak p}
	\cong
	K^{i+1}_{A_{\mathfrak p}}(A_{\mathfrak p}).
	\]
	This proves the equivalence.
\end{proof}
We single out the codimension-one case, where the comparison map
\(\varphi_{\mathfrak p}\) detects the first possible failure of \(B\) beyond
the \(S_2\)-condition. Equivalently, this failure is measured by the
\(\mathfrak pA_{\mathfrak p}\)-torsion of
\(H^2_{IA_{\mathfrak p}}(A_{\mathfrak p})\).
 \begin{corollary}
 	Let $(A,\mathfrak m)$ be a local ring that is a homomorphic image of a Gorenstein local ring, and assume that $J(A)=0$. Let $B$ and $C$ be as in \eqref{exact_ABC}. Assume \(C\neq 0\), and let \(t=\dim C\).
 	
 	Let $\mathfrak p\in \Supp^{\mathrm{top}}(C)$ be such that
 	\[
 	\dim A/\mathfrak p = t-1,
 	\]
 	i.e.\ $\mathfrak p$ is a codimension-$1$ point of $\Supp^{\mathrm{top}}(C)$. Then the following conditions are equivalent:
 	\begin{enumerate}
 		\item[(a)] $\varphi_{\mathfrak p}:(K^{t+1}(A))_{\mathfrak p}\to (K^t(C))_{\mathfrak p}$ is an isomorphism;
 		\item[(b)] $\depth B_{\mathfrak p}\ge 3$;
 		\item[(c)] $H^0_{\mathfrak p A_{\mathfrak p}}\!\big(H^2_{I A_{\mathfrak p}}(A_{\mathfrak p})\big)=0$;
 		\item[(d)] $J(K^{t+1}(A))_{\mathfrak p}=0$.
 	\end{enumerate}
 \end{corollary}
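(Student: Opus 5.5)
The plan is to obtain (a)$\Leftrightarrow$(b) as the case \(s=1\) of Proposition~\ref{prop:stratified-comparison}, and then connect (c) and (d) to these conditions. Take \(\mathfrak p\) with \(\dim A/\mathfrak p=t-1\). Proposition~\ref{prop:stratified-comparison} says that \(\depth B_{\mathfrak p}\ge 3\) holds exactly when the localized maps \((K^{t}(A))_{\mathfrak p}\to (K^{t-1}(C))_{\mathfrak p}\) (\(i=0\)) and \((K^{t+1}(A))_{\mathfrak p}\to (K^{t}(C))_{\mathfrak p}\) (\(i=1\)) are both isomorphisms. The map for \(i=1\) is \(\varphi_{\mathfrak p}\). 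So (b)$\Rightarrow$(a) is immediate.

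For (a)$\Rightarrow$(b) I must show the \(i=0\) map is automatically an isomorphism. By local duality this map is \(H^0_{\mathfrak pA_{\mathfrak p}}(C_{\mathfrak p})\to H^1_{\mathfrak pA_{\mathfrak p}}(A_{\mathfrak p})\). Its neighbours in the long exact sequence are \(H^0\) and \(H^1\) of \(B_{\mathfrak p}\), and both vanish by \eqref{B_p_S_2}, because \(B_{\mathfrak p}\) is \(S_2\) of dimension \(\ge 3\). Hence the \(i=0\) map is an isomorphism. The \(i=1\) map, which is \(H^1(C_{\mathfrak p})\to H^2(A_{\mathfrak p})\), is injective with cokernel embedded in \(H^2(B_{\mathfrak p})\). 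I will then check that its surjectivity forces \(H^2_{\mathfrak pA_{\mathfrak p}}(B_{\mathfrak p})=0\). The next term is \(H^2(C_{\mathfrak p})\), which vanishes since \(\dim C_{\mathfrak p}=1\), so \(H^2(B_{\mathfrak p})\) is exactly the cokernel of the \(i=1\) map. This is the same argument as in the proof of Proposition~\ref{prop:stratified-comparison}, giving (a)$\Leftrightarrow$(b).

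For (b)$\Leftrightarrow$(c), I identify \(H^2_{\mathfrak pA_{\mathfrak p}}(B_{\mathfrak p})\) with \(H^0_{\mathfrak pA_{\mathfrak p}}(H^2_{IA_{\mathfrak p}}(A_{\mathfrak p}))\), using \(B\cong D_I(A)\) and the fact that \(\mathfrak p\supseteq I\). Ideal transforms give \(H^i_{I}(D_I(A))=0\) for \(i=0,1\) and \(R^iD_I(A)\cong H^{i+1}_I(A)\) for \(i\ge1\). I will apply the Grothendieck spectral sequence for the composite \(\Gamma_{\mathfrak p}\circ D_I\) (or the Mayer--Vietoris/comparison sequence \(H^i_{\mathfrak p}(D_I(A))\) versus \(H^i_{\mathfrak p}\) of \(H^\bullet_I(A)\)), localized at \(\mathfrak p\). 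In low degrees \(H^0_{\mathfrak p}(B_{\mathfrak p})=H^1_{\mathfrak p}(B_{\mathfrak p})=0\). The expected conclusion is \(H^2_{\mathfrak p}(B_{\mathfrak p})\hookrightarrow H^0_{\mathfrak p}(H^2_I(A)_{\mathfrak p})\), injective with the relevant higher terms controlled. Together with \(\depth B_{\mathfrak p}\ge 2\), this should give \(H^2_{\mathfrak p}(B_{\mathfrak p})=0\) iff \(\Gamma_{\mathfrak p}(H^2_I(A)_{\mathfrak p})=0\). \textbf{This identification is the main obstacle.} A cleaner route to try first: \(D_I(A)_{\mathfrak p}\) is \(\Gamma(U_{\mathfrak p},\mathcal O)\) with \(U_{\mathfrak p}=\Spec A_{\mathfrak p}\setminus V(I)\), so \(H^i_{\mathfrak p}(B_{\mathfrak p})\) relates to \(H^{i-1}\) of the punctured spectrum. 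Alternatively, the functor \(\Gamma_{\mathfrak p}\circ\Gamma_I=\Gamma_{\mathfrak p}\) together with the \(S_2\)-ness of \(B_{\mathfrak p}\) gives a five-term exact sequence
\[
0\to H^1_{\mathfrak p}(\Gamma_I)\to H^1_{\mathfrak p}(A_{\mathfrak p})\to \Gamma_{\mathfrak p}(H^1_I)\to H^2_{\mathfrak p}(\Gamma_I)\to\cdots,
\]
and I would write out the analogous sequence in one degree higher.

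For (a)$\Leftrightarrow$(d), I use the proof of Theorem~\ref{thm_S_2hull}. There \(\ker\varphi=J(K^{t+1}(A))\), and \(\operatorname{coker}\varphi=Q\) with \(\dim Q\le t-2\). Since \(\dim A/\mathfrak p=t-1>t-2\), we have \(Q_{\mathfrak p}=0\), so \(\varphi_{\mathfrak p}\) is always surjective. Hence \(\varphi_{\mathfrak p}\) is an isomorphism iff \((\ker\varphi)_{\mathfrak p}=J(K^{t+1}(A))_{\mathfrak p}=0\).
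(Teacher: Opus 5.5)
\textbf{Verdict: (a)\(\Leftrightarrow\)(b) and (a)\(\Leftrightarrow\)(d) are fine; (c) is a genuine gap.} Your argument for (a)\(\Leftrightarrow\)(b) is the case \(s=1\) of Proposition~\ref{prop:stratified-comparison}, with the \(i=0\) comparison automatic from \eqref{B_p_S_2}. Your argument for (a)\(\Leftrightarrow\)(d) uses \(\ker\varphi=J(K^{t+1}(A))\) and \(\dim\operatorname{coker}\varphi\le t-2\). Both are correct and coincide with the paper's.

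For (c), you do not give a proof, only candidate routes. The outcome you anticipate is an injection \(H^2_{\mathfrak pA_{\mathfrak p}}(B_{\mathfrak p})\hookrightarrow H^0_{\mathfrak pA_{\mathfrak p}}(H^2_{IA_{\mathfrak p}}(A_{\mathfrak p}))\). That would only yield (c)\(\Rightarrow\)(b), and \(\depth B_{\mathfrak p}\ge 2\) does not supply the converse. What is needed is that \(H^0_{\mathfrak pA_{\mathfrak p}}(H^2_{IA_{\mathfrak p}}(A_{\mathfrak p}))\) is isomorphic to \(H^2_{\mathfrak pA_{\mathfrak p}}(B_{\mathfrak p})\). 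Equivalently, it must be the cokernel of the connecting map \(H^1_{\mathfrak pA_{\mathfrak p}}(C_{\mathfrak p})\to H^2_{\mathfrak pA_{\mathfrak p}}(A_{\mathfrak p})\). That identification is the entire content of (c).

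\textbf{How the paper closes it.} The paper does exactly the ``one degree higher'' computation you postpone. Consider \(E_2^{a,b}=H^a_{\mathfrak pA_{\mathfrak p}}(H^b_{IA_{\mathfrak p}}(A_{\mathfrak p}))\Rightarrow H^{a+b}_{\mathfrak pA_{\mathfrak p}}(A_{\mathfrak p})\). The row \(b=0\) vanishes because \(H^0_{IA_{\mathfrak p}}(A_{\mathfrak p})=0\). The row \(b=1\) is \(H^a_{\mathfrak pA_{\mathfrak p}}(C_{\mathfrak p})\). Also \(E_2^{2,1}=0\) because \(\dim C_{\mathfrak p}=1\). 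Hence both \(E_2^{1,1}\) and \(E_2^{0,2}\) survive, giving \(0\to H^1_{\mathfrak pA_{\mathfrak p}}(C_{\mathfrak p})\to H^2_{\mathfrak pA_{\mathfrak p}}(A_{\mathfrak p})\to H^0_{\mathfrak pA_{\mathfrak p}}(H^2_{IA_{\mathfrak p}}(A_{\mathfrak p}))\to 0\). Matlis duality then compares (a) with (c) directly.

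\textbf{Your first route also works.} It can be finished, and it gives the isomorphism outright:
\begin{itemize}
\item Since \(I\subseteq\mathfrak p\), one has \(H^0_{\mathfrak p}(D_I(M))\subseteq H^0_I(D_I(M))=0\), so \(H^0_{\mathfrak p}\circ D_I=0\).
\item \(D_I\) sends injectives to injectives, because \(D_I(E)\cong E/H^0_I(E)\).
\item \(R^bD_I\cong H^{b+1}_I\) for \(b\ge 1\).
\end{itemize}
The Grothendieck spectral sequence \(H^a_{\mathfrak pA_{\mathfrak p}}(R^bD_{IA_{\mathfrak p}}(A_{\mathfrak p}))\Rightarrow 0\) therefore forces \(d_2\colon H^0_{\mathfrak pA_{\mathfrak p}}(H^2_{IA_{\mathfrak p}}(A_{\mathfrak p}))\to H^2_{\mathfrak pA_{\mathfrak p}}(B_{\mathfrak p})\) to be bijective. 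This gives (b)\(\Leftrightarrow\)(c) without having to match the spectral-sequence edge map to the connecting homomorphism, which the paper's route uses implicitly.
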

 
\begin{proof}
Here \(s=1\). The equivalence of \textup{(a)} and \textup{(b)} follows from
Proposition~\ref{prop:stratified-comparison}: in (b), the comparison for \(i=0\) is
automatic from the \(S_2\)-condition on \(B_{\mathfrak p}\), see
\eqref{B_p_S_2}, while the comparison for \(i=1\), after Matlis duality, is
precisely the localization of \(\varphi\) at \(\mathfrak p\).
	
	The equivalence of \textup{(a)} and \textup{(d)} follows from
	\(\ker\varphi=J(K^{t+1}(A))\), together with the fact that
	\(\operatorname{coker}\varphi\) has dimension at most \(t-2\), and hence
	vanishes after localizing at \(\mathfrak p\).
	
	It remains to compare \textup{(a)} and \textup{(c)}. Consider the localized
	Grothendieck spectral sequence
	\[
	E_2^{a,b}
	=
	H^a_{\mathfrak p A_{\mathfrak p}}
	\bigl(H^b_{I A_{\mathfrak p}}(A_{\mathfrak p})\bigr)
	\Longrightarrow
	H^{a+b}_{\mathfrak p A_{\mathfrak p}}(A_{\mathfrak p}).
	\]
	We have
	\[
	H^0_{I A_{\mathfrak p}}(A_{\mathfrak p})=0,
	\qquad
	H^1_{I A_{\mathfrak p}}(A_{\mathfrak p})\cong C_{\mathfrak p}.
	\]
	Since \(\dim C_{\mathfrak p}=1\), one has
	\(H^2_{\mathfrak p A_{\mathfrak p}}(C_{\mathfrak p})=0\). Hence the terms of
	total degree \(2\) give a short exact sequence
	\[
	0 \to H^1_{\mathfrak p A_{\mathfrak p}}(C_{\mathfrak p})
	\to H^2_{\mathfrak p A_{\mathfrak p}}(A_{\mathfrak p})
	\to
	H^0_{\mathfrak p A_{\mathfrak p}}
	\bigl(H^2_{I A_{\mathfrak p}}(A_{\mathfrak p})\bigr)
	\to 0.
	\]
	By Matlis duality, \(\varphi_{\mathfrak p}\) is an isomorphism if and only if
	\[
	H^1_{\mathfrak p A_{\mathfrak p}}(C_{\mathfrak p})
	\longrightarrow
	H^2_{\mathfrak p A_{\mathfrak p}}(A_{\mathfrak p})
	\]
	is an isomorphism. By the short exact sequence above, this is equivalent to
	\[
	H^0_{\mathfrak p A_{\mathfrak p}}
	\bigl(H^2_{I A_{\mathfrak p}}(A_{\mathfrak p})\bigr)=0.
	\]
	Thus \textup{(a)} and \textup{(c)} are equivalent.
\end{proof}
\subsection{Examples and applications}

We end with examples illustrating several aspects of the preceding results.
The first class comes from codimension two lattice ideals, where the
Peeva--Sturmfels resolution gives a natural way to prove indecomposability of
the relevant deficiency module and hence to deduce connectedness of the
top-dimensional non-\(S_2\) locus. We then give a flexible fiber-product construction producing
\(S_2\)-ifications with prescribed \(S_k\)-behavior. Finally,
we include a small example showing that the defect module \(C=B/A\) need not be
a quotient ring of \(A\).

\subsubsection{Codimension two lattice ideals}
Let $S = k[x_1,\dots,x_n]$ be a polynomial ring over a field $k$. For a vector $\underline{u} \in \mathbb{N}^n$, denote the corresponding monomial in $S$ by
\(x^{\underline{u}} := x_1^{u_1}\cdots x_n^{u_n}.\)
Let $\mathcal{L} \subseteq \mathbb{Z}^n$ be a sublattice and set \[
\Gamma:= \mathbb{Z}^n/\mathcal{L}.
\]  The associated lattice ideal in $S$ is defined by
\[
I_\mathcal{L}:= \langle x^{\underline{u}} - x^{\underline{v}} : \underline{u}, \underline{v} \in \mathbb{N}^n,\ \underline{u}-\underline{v}\in \mathcal L \rangle.
\]
The codimension of $I_\mathcal{L}$ equals the rank of $\mathcal{L}$. When $I_\mathcal{L}$ is prime, the quotient ring $S/I_\mathcal{L}$ is called an affine semigroup ring (or a toric ring), and $I_\mathcal{L}$ is said to be toric.
The ideal $I_\mathcal{L}$ is prime if and only if the lattice $\mathcal{L}$ is saturated, that is,
\begin{equation}\label{eqsat}
	\mathcal{L} = \mathcal{L}^{\mathrm{sat}} := \{\, \underline{u} \in \mathbb{Z}^n : r\underline{u}\in \mathcal L \text{ for some } r \in \mathbb{Z}_{>0} \,\},
\end{equation}
see \cite{peeva} and \cite[Chapter~7]{semigroup}.

	Before stating the theorem, we note that 
	by \cite[Prop.~4.1]{peeva}, the  assumption that $I_\mathcal{L}$ has at least $4$ minimal
	generators is equivalent to assuming that $A:=S/I_{\mathcal L}$ is not Cohen--Macaulay.
	
	In this subsection we use the preceding local results in their standard
	graded-local form.
\begin{theorem}\label{lattice_ideal_theorem}
	Let $I_{\mathcal L}$ be a lattice ideal of codimension $2$ minimally  generated by at least $4$ elements, and set $A := S/I_{\mathcal L}$. Let $d := \dim A = n-2$. Then $K^{d-1}(A)$ is $\Gamma$-graded indecomposable.
	
	If the non-\(S_2\) locus of \(A\) is nonempty, then it
	has dimension \(d-2\). 
	Moreover, suppose that $I_{\mathcal L}$ is prime and   that \(K^{d-1}(A)\) is equidimensional and
 \(S_2\), then   the top-dimensional part of the non-\(S_2\)
 locus,
 \(	\operatorname{non}\text{-}S_2^{\mathrm{top}}(A)\),  is connected in codimension \(1\). 
\end{theorem}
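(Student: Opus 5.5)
We will use the Peeva--Sturmfels minimal free resolution of $I_{\mathcal L}$ for a codimension $2$ lattice ideal with at least $4$ generators:
\[
0\to S^{r-2}\xrightarrow{\ \partial_3\ } S^{2r-2}\xrightarrow{\ \partial_2\ } S^{r}\xrightarrow{\ \partial_1\ } I_{\mathcal L}\to 0 ,
\]
which is $\Gamma$-graded ($\mathbb Z^n/\mathcal L$-graded) and has length $3$; here $r\ge 4$ is the number of minimal generators. Since $\operatorname{pd}_S A=3=\operatorname{codim}+1$, we get $\depth A=d-1$, and the deficiency modules of $A$ are $K^d(A)=\omega_A=\Ext^2_S(A,S)$ and $K^{d-1}(A)=\Ext^3_S(A,S)=\operatorname{coker}(\partial_3^{T})$, while all others vanish. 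So $K^{d-1}(A)$ has an explicit presentation
\[
S^{2r-2}\xrightarrow{\ \partial_3^{T}\ } S^{r-2}\to K^{d-1}(A)\to 0 .
\]

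\textbf{Step 1: indecomposability of $K^{d-1}(A)$.} We will show that the $\Gamma$-graded endomorphism ring of $K^{d-1}(A)$ has no nontrivial idempotents. Because the resolution is minimal, $K^{d-1}(A)$ is minimally generated by $r-2$ homogeneous elements. A degree-zero idempotent endomorphism lifts to a graded endomorphism of the minimal presentation. Reducing modulo the graded maximal ideal (the monomials are non-units), it yields an idempotent matrix on $k^{r-2}$ that preserves the structure of $\partial_3$. In the Peeva--Sturmfels description, the $\Gamma$-degrees of the generators of $S^{r-2}$ are distinct, and each generator is linked to the next through the columns of $\partial_3$, which have a chain-like pattern of binomial/monomial entries. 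We will use this to show that any degree-zero idempotent is either $0$ or the identity modulo $\mathfrak m$, hence by Nakayama is $0$ or $1$. This combinatorial analysis of $\partial_3$ is the main obstacle. We would first try the case $r=4$, where $\partial_3^{T}$ is a $2\times 6$ matrix, and then use the ``circuit'' structure of the Peeva--Sturmfels syzygies in general.

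\textbf{Step 2: dimension of the non-$S_2$ locus.} $A$ is unmixed and equidimensional: it has no embedded primes, since lattice ideals have minimal primes only. Hence we are in the setting of \eqref{exact_ABC}. By Proposition~\ref{prop_canonical_module}(c), $A$ is $S_2$ at a prime $\mathfrak p$ iff $\dim K^{d-1}(A)_{\mathfrak p}$ meets the bound there, and $\dim K^{d-1}(A)\le d-1$ always holds. Since $C\neq0$, we can set $t=\dim C\le d-2$. Proposition~\ref{prop:assh-equality} gives $\dim K^{t+1}(A)=t$. As the only nonzero deficiency module below $d$ is $K^{d-1}(A)$, we must have $t+1=d-1$, i.e. $t=d-2$.

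\textbf{Step 3: connectedness.} Assume $I_{\mathcal L}$ is prime, so $\Gamma$ is torsion-free, and that $K^{d-1}(A)$ is equidimensional and $S_2$. Then $J(K^{d-1}(A))=0$, so $K^{d-1}(A)=\overline{K^{d-1}(A)}$. The biduality map to $K_{K_{K^{d-1}(A)}}$ is an isomorphism. By Theorem~\ref{thm_S_2hull} this gives $K^{d-1}(A)\cong K^{d-2}(C)$ via $\varphi$. Its support is $\Supp^{\mathrm{top}}C=\operatorname{non}\text{-}S_2^{\mathrm{top}}(A)$ by Proposition~\ref{prop_canonical_module}(b) and Proposition~\ref{prop:assh-equality}. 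Now apply the first part of Theorem~\ref{indecomposable_graded} to the $\Gamma$-graded $S_2$-module $K^{d-1}(A)$, which is $\Gamma$-graded indecomposable by Step 1. We conclude that its support, i.e. $\operatorname{non}\text{-}S_2^{\mathrm{top}}(A)$, is connected in codimension $1$.
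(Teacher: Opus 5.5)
Your Steps 2 and 3 are fine. Step 1, however, is the first assertion of the theorem and the input Step 3 relies on, and it is only announced; the plan as written would not go through.

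First, the data are off. With $r\ge 4$ minimal generators the Peeva--Sturmfels resolution is $0\to S^{r-3}\to S^{2r-4}\to S^{r}\to I_{\mathcal L}\to 0$. Here $F_3$ is indexed by the $r-3$ syzygy quadrangles; the root quadrangle gives four triangles and each further quadrangle two more. Your ranks have alternating sum $0$ instead of $\operatorname{rank}_S I_{\mathcal L}=1$. In particular, for $r=4$ one has $K^{d-1}(A)\cong S/(x^{\underline s},x^{\underline t},x^{\underline r},x^{\underline p})$, which is cyclic, so there is no $2\times 6$ case.

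Second, and more seriously, reduction modulo $\mathfrak m$ cannot be where $\partial_3$ is used. By minimality $\partial_3\otimes_S k=0$, so an idempotent on $K^{d-1}(A)/\mathfrak m K^{d-1}(A)$ retains no trace of $\partial_3$. Your final Nakayama step is fine, but the combinatorics has to enter over $S$. Distinct $\Gamma$-degrees of the summands also do not by themselves make a lifted endomorphism diagonal. A degree-zero map $S(\gamma_Q)\to S(\gamma_{Q'})$ is multiplication by an element of $S_{\gamma_{Q'}-\gamma_Q}$, which could be a nonconstant monomial.

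What is missing are the two ingredients of the paper's argument.

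(i) From the proof of \cite[Lem.~5.6]{peeva}: if $C\neq D$ are degrees of minimal $(i+1)$-st syzygies, $i=1,2$, then $S_{C-D}=S_{D-C}=0$, as in \eqref{consequence}. Since all multigraded Betti numbers are at most $1$, every degree-zero endomorphism of $F_3^*$ \emph{and} of $F_2^*$ is diagonal. You need both, because a lift of an endomorphism of $\operatorname{coker}(\partial_3^T)$ is a pair $(\alpha,\beta)$ with $\alpha\,\partial_3^T=\partial_3^T\beta$. For diagonal $\alpha,\beta$ this forces $\alpha_{QQ}=\beta_{TT}$ whenever the $(Q,T)$-entry of $\partial_3^T$ is nonzero, that is, whenever $T$ is one of the four triangles of $Q$.

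(ii) The bipartite quadrangle--triangle incidence graph is connected. In an ordering compatible with the homology tree $\mathcal T_{\mathcal L}$, each non-root quadrangle shares two of its triangles with earlier ones \cite[Lem.~5.7]{peeva}.

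Together, (i) and (ii) show that a graded splitting would give a block-diagonal form of $\partial_3^T$ after merely rescaling rows and columns, which contradicts connectivity.

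Your Step 2 is a valid variant of the paper's. You get $K^{t+1}(A)\neq 0$ from Proposition~\ref{prop:assh-equality} and conclude $t+1=d-1$ from $\depth A=d-1$. The paper instead argues by contradiction, comparing $K^{d-s}(C)\cong K^{d-s}(B)$ with Proposition~\ref{prop_canonical_module}(c) applied to $B$. Your route needs $J(A)=0$, which you correctly justify by unmixedness of lattice ideals. Step 3 matches the paper.
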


\begin{proof}
	We first prove the graded indecomposability of \(K^{d-1}(A)\). The argument is
	combinatorial and uses the structure of the Peeva--Sturmfels resolution.
Let $m \ge 4$ be the minimal number of generators of $I_{\mathcal L}$. By \cite[Chapter~5]{peeva}, Peeva and Sturmfels construct a minimal $\Gamma$-graded free resolution of $S/I_{\mathcal L}$ of the form
\[
0 \to F_3 \to F_2 \to F_1 \to S \to S/I_{\mathcal L} \to 0,
\]
where $F_3$ is indexed by the $m-3$ syzygy quadrangles, and the complex is obtained by gluing the quadrangle resolutions $F_{\mathcal C}$ (see \cite[Construction~5.2]{peeva}) along a homology tree $\mathcal{T}_{\mathcal L}$ (as defined in \cite[p.~175]{peeva}), which is a finite directed tree by \cite[Thm.~4.5]{peeva}.

Since \(K^{d-1}(A) \cong \operatorname{Ext}^{3}_S(A,S) \cong \operatorname{coker}(d_3^*)\), it suffices to show that $M := \operatorname{coker}(d_3^*)$ is $\Gamma$-graded indecomposable.

Write \[
F_3 = \bigoplus_{Q} S(-\gamma_Q), \qquad
F_2 = \bigoplus_{T} S(-\tau_T),
\] where \(Q\) runs over the syzygy quadrangles and \(T\) runs over the
syzygy triangles. By \cite[Comments~5.9(c)]{peeva}, each multigraded
Betti number \(\beta_{i,\gamma}\), \(\gamma\in \Gamma\), is either \(0\)
or \(1\). In particular, the degrees \(\gamma_Q\) are pairwise distinct,
and the degrees \(\tau_T\) are pairwise distinct.

We first show that all degree-$0$ $\Gamma$-graded endomorphisms of $F_3$ and $F_2$ are diagonal.

We use the following consequence of the proof of
\cite[Lem.~5.6]{peeva}. Let \(1\le i\le 2\), and let
\(C,D\in \Gamma \) be degrees of minimal
\((i+1)\)-st syzygies. If \(C\ne D\), then neither \(D-C\) nor \(C-D\)
is the degree of a nonconstant monomial in \(S\); equivalently,
\begin{equation}\label{consequence}
S_{D-C}=S_{C-D}=0 .
\end{equation}
Indeed, if \(D=C+\deg y\) for some nonconstant monomial \(y\), then the
argument in the proof of \cite[Lem.~5.6]{peeva} shows that
\(\Delta_D\) is contractible, contradicting the fact that \(D\) is the
degree of a minimal \((i+1)\)-st syzygy.

Now fix two distinct quadrangles \(Q\ne Q'\). A degree-\(0\)
\(\Gamma\)-graded map
\(S(-\gamma_Q) \to S(-\gamma_{Q'})\)
is given by multiplication by an element of
\(S_{\gamma_Q-\gamma_{Q'}}\). Since \(\gamma_Q\) and \(\gamma_{Q'}\) are
distinct degrees of minimal third syzygies, the preceding argument  
\eqref{consequence} gives
\(S_{\gamma_Q-\gamma_{Q'}}=0 .\)
Thus there are no off-diagonal degree-\(0\) maps between distinct summands
of \(F_3\). Since  the multigraded Betti
numbers are all \(0\) or \(1\), every degree-\(0\) \(\Gamma\)-graded
endomorphism of \(F_3\) is diagonal.
The same argument applied to the degrees \(\tau_T\) of the minimal second
syzygies shows that every degree-\(0\) \(\Gamma\)-graded endomorphism of
\(F_2\) is diagonal. The same conclusion holds for the dual modules \(F_3^*\) and
\(F_2^*\).

We now show that \(M\)
is \(\Gamma\)-graded indecomposable.
Suppose for contradiction that $M \cong M' \oplus M''$ is a nontrivial decomposition in the category of \(\Gamma\)-graded
\(A\)-modules. Then there exists a minimal $\Gamma$-graded presentation of $M$ that is block diagonal:
\[
G_1' \oplus G_1'' \xrightarrow{\psi' \oplus \psi''} G_0' \oplus G_0'' \to M \to 0.
\]
On the other hand,
\[
F_2^* \xrightarrow{d_3^*} F_3^* \to M \to 0
\]
is the minimal \(\Gamma\)-graded presentation obtained from the
Peeva--Sturmfels resolution.  Hence these two minimal presentations differ by degree-$0$ $\Gamma$-graded automorphisms of $F_2^*$ and $F_3^*$. Since these automorphisms are diagonal, passing from $d_3^*$ to $\psi' \oplus \psi''$ only rescales rows and columns.
It follows that the matrix of \(d_3^*\) would have to admit a nontrivial block
decomposition after partitioning the basis elements of \(F_2^*\) and \(F_3^*\)
into two nonempty blocks.

Let \(\mathcal G\) be the bipartite graph whose
vertices are the basis elements of \(F_3\) and \(F_2\), equivalently the
syzygy quadrangles \(Q\) and the syzygy triangles \(T\), and where there is
an edge \(Q-T\) if and only if the coefficient of the basis element \(e_T\)
in \(d_3(e_Q)\) is nonzero.  

The connectedness of the homology tree \(\mathcal T_{\mathcal L}\)
\cite[Thm.~4.5]{peeva} implies the connectedness of the graph
\(\mathcal G\). We spell this out in terms of the entries of \(d_3\).
By \cite[Construction~5.1]{peeva}, there is an edge between a quadrangle
\(Q\) and a triangle \(T\) precisely when \(T\) is one of the four
triangles belonging to \(Q\).

Let $P_1, \dots, P_{m-3}$ be an ordering of the syzygy quadrangles compatible with the homology tree $\mathcal{T}_{\mathcal{L}}$, for example as in \cite[Cor.~4.7]{peeva}. By
\cite[Lem.~5.7]{peeva}, the root quadrangle \(P_1\) gives four minimal
second syzygies, while each \(P_i\), \(i\ge 2\), contributes exactly two
additional minimal second syzygies. Thus, for each \(i\ge 2\), among the
four triangles belonging to \(P_i\), two have already appeared for some
earlier quadrangle \(P_j\), \(j<i\).

It follows that the graph \(\mathcal G\) is connected.
Since \(d_3^*\) is represented by the transpose matrix of \(d_3\), the same
bipartite graph also describes the nonzero entries of \(d_3^*\). But a block diagonal presentation  would make the graph $\mathcal{G}$   disconnected, because there would be no nonzero matrix entries between the two nonempty blocks, contradicting the connectedness of $\mathcal{G}$.

Therefore, $M$ is $\Gamma$-graded indecomposable.

We now prove the second part of the theorem. 
By the Peeva--Sturmfels minimal resolution, we have $\operatorname{pd}_S A = 3$. Together with   Auslander--Buchsbaum, this gives $\depth A = \dim S - \operatorname{pd}_S A = n - 3 = d - 1.$

Let
\(0\to A\to B\to C\to 0\)
be the \(S_2\)-ification sequence. Suppose that the non-\(S_2\) locus of \(A\) is nonempty, equivalently
\(C\neq 0\). We have
\(\dim C\le d-2\), and now show that \(\dim C=d-2\). 

Suppose, to the contrary, that \(\dim C=d-s<d-2\), so \(s\ge 3\). Since
\(\depth A=d-1\), we have 
\(K^i(A)=0\)  for \(i\le d-2\).
From the long exact sequence of deficiency modules, we obtain an isomorphism
\[
K^{d-s}(C)\cong K^{d-s}(B).
\]
But \(K^{d-s}(C)\) is the canonical module of \(C\), so
\[
\dim K^{d-s}(C)=\dim C=d-s.
\]
On the other hand, since \(B\) is \(S_2\), Proposition~\ref{prop_canonical_module}(c)
gives
\[
\dim K^{d-s}(B)\le d-s-2,
\]
a contradiction. Hence \(\dim C=d-2\), that is, the non-$S_2$ locus which is defined by $\ann _A C$ has dimension $d-2$.

We now assume that \(I_{\mathcal L}\) is prime  and that \(K^{d-1}(A)\)  is equidimensional and \(S_2\).
 By Proposition~\ref{prop:assh-equality}, 
\[
\Supp  K^{d-1}(A) =\Supp^{\mathrm{top}} K^{d-1}(A)
=
\Supp^{\mathrm{top}} K^{d-2}(C)
=
\operatorname{non}\text{-}S_2^{\mathrm{top}}(A).
\]
When \(I_{\mathcal L}\) is prime, $\Gamma$ is a torsion-free abelian group. Hence Theorem~\ref{indecomposable_graded}   applied to the \(\Gamma\)-graded
indecomposable  module \(K^{d-1}(A)\) shows that this top-dimensional
support is connected in codimension \(1\). This finishes the proof. 
\end{proof}

\begin{remark}
	The connectedness conclusion above does not   follow directly from
	Grothendieck's connectedness theorem. Assume \(J(A)=0\), and let
	\(\mathfrak a\supset I_{\mathcal L}\) be the ideal
	defining the top-dimensional part of the non-\(S_2\) locus of \(A\). Then
	\(	\operatorname{ht} (\mathfrak a/I_{\mathcal L} )\ge 2.\)
	Let \(\widehat S\) denote the completion of \(S\) at the graded maximal
	ideal. Grothendieck's connectedness theorem \cite[Thm.~19.2.10]{local cohomology} gives
	\[
		c\bigl( S/\mathfrak a  S\bigr)\ge 
	c\bigl(\widehat S/\mathfrak a\widehat S\bigr)
	\ge
	\min\{c(\widehat S),\,\operatorname{sdim}\widehat S-1\}
	-\operatorname{ara} (\mathfrak a\widehat S ).
	\]
	Since
	\(	\operatorname{ara} (\mathfrak a\widehat S )
	\ge
	\operatorname{ht} (\mathfrak a )
	\ge 4,\)
	the right-hand side is at most \(n-5=d-3\). Thus this bound obtained from
	Grothendieck's theorem alone does not imply connectedness in codimension \(1\)
	of the non-\(S_2\) locus. The connectedness result above therefore uses
	additional information carried by the deficiency module \(K^{d-1}(A)\).
\end{remark}
For a local ring \(A\) with a canonical module \(\omega_A\), the \(S_2\)-ification
\(\operatorname{End}_A(\omega_A)\) is Cohen--Macaulay precisely when
\(\omega_A\) is Cohen--Macaulay; this is  standard from \cite[Prop.~2.2]{S_2-ification_universal}.
\begin{corollary}\label{cor_toric}
Let \(I_{\mathcal L}\) be a toric ideal of codimension \(2\), minimally
generated by at least \(4\) elements, and set \(A:=S/I_{\mathcal L}\). Suppose
that  the canonical module \(\omega_A\) of \(A\) is  Cohen--Macaulay; for example, this holds when \(A\) is a simplicial affine semigroup ring by  \cite[Thm.~6.4]{ccm}. Then
the non-\(S_2\) locus of \(A\) coincides with the non-Cohen--Macaulay locus of
\(A\), and its top-dimensional part is connected in codimension \(1\).
\end{corollary}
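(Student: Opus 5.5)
The plan is to reduce Corollary~\ref{cor_toric} to Theorem~\ref{lattice_ideal_theorem}. That means checking two hypotheses on \(K^{d-1}(A)\): that it is equidimensional and \(S_2\). We also need the identification of the two loci. Since \(I_{\mathcal L}\) is toric, \(A\) is a domain, so \(J(A)=0\), and \(A\) is \(\Gamma\)-graded with \(\Gamma=\mathbb Z^n/\mathcal L\) torsion free. Because \(I_{\mathcal L}\) has at least \(4\) minimal generators, \(A\) is not Cohen--Macaulay by \cite[Prop.~4.1]{peeva}. The Peeva--Sturmfels resolution has length \(3\), so \(\depth A=d-1\). Hence the only nonzero deficiency modules are \(K^{d-1}(A)\) and \(K^d(A)=\omega_A\), and \(K^{d-1}(A)\neq 0\).

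\emph{Step 1: the loci coincide and \(C\neq 0\).} By \cite[Prop.~2.2]{S_2-ification_universal}, \(\omega_A\) being Cohen--Macaulay means \(B=\End_A(\omega_A)\) is Cohen--Macaulay; the same holds after localizing at any prime \(\mathfrak p\), where \(B_{\mathfrak p}\) is the \(S_2\)-ification of \(A_{\mathfrak p}\). If \(A_{\mathfrak p}\) is \(S_2\), then \(C_{\mathfrak p}=0\), so \(A_{\mathfrak p}\cong B_{\mathfrak p}\) is Cohen--Macaulay. The converse inclusion is trivial, so non-\(S_2(A)=\) non-CM\((A)=\Supp C\). Since \(A\) itself is not CM, \(C\neq 0\), and Theorem~\ref{lattice_ideal_theorem} gives \(\dim C=d-2=:t\).

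\emph{Step 2: \(K^{d-1}(A)\cong K^{d-2}(C)\).} I would use the long exact sequence of deficiency modules for \(0\to A\to B\to C\to 0\). Since \(B\) is Cohen--Macaulay of dimension \(d\), \(K^i(B)=0\) for \(i<d\). Hence the connecting map \(\varphi\colon K^{t+1}(A)=K^{d-1}(A)\to K^{t}(C)=K^{d-2}(C)\) is an isomorphism, since both neighbouring terms \(K^{d-1}(B)\) and \(K^{d-2}(B)\) vanish. By Proposition~\ref{prop_canonical_module}(b), \(K^{d-2}(C)\) is a canonical module, so it is \(S_2\), unmixed, and equidimensional. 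Therefore \(K^{d-1}(A)\) is equidimensional and \(S_2\), and Theorem~\ref{lattice_ideal_theorem} applies: \(\operatorname{non}\text{-}S_2^{\mathrm{top}}(A)\) is connected in codimension \(1\).

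\emph{Step 3: the simplicial example.} For a simplicial affine semigroup ring, \(\omega_A\) is Cohen--Macaulay by \cite[Thm.~6.4]{ccm}, which is the stated example.

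The main point to check carefully is Step 1. It needs \(S_2\)-ification to commute with localization and needs the Cohen--Macaulayness of \(B\) to pass to \(B_{\mathfrak p}\). I expect both to be standard, since \(B_{\mathfrak p}=\End_{A_{\mathfrak p}}(\omega_{A_{\mathfrak p}})\). Everything else is formal vanishing of deficiency modules of a Cohen--Macaulay module.
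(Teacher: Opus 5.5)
Your proposal is correct and follows the paper's proof. Cohen--Macaulayness of $B$ (via \cite[Prop.~2.2]{S_2-ification_universal}) makes the two loci agree and kills $K^{d-1}(B)$ and $K^{d-2}(B)$, so $\varphi\colon K^{d-1}(A)\to K^{d-2}(C)$ is an isomorphism onto a canonical module, which supplies the equidimensional and $S_2$ hypotheses of Theorem~\ref{lattice_ideal_theorem}. The localization concern you flag in Step~1 needs no compatibility of $S_2$-ification with localization. The identification $\operatorname{non}\text{-}S_2(A)=V(\operatorname{ann}_A C)=\Supp C$ from the set-up, together with the fact that $B_{\mathfrak p}$ is Cohen--Macaulay whenever $B$ is, already covers it.
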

\begin{proof}
Since \(B\) is Cohen--Macaulay,   the non-\(S_2\) locus agrees with the non-Cohen--Macaulay locus, and we have an isomorphism  \(K^{d-1}(A)\cong K^{d-2}(C)\). Thus  \(K^{d-1}(A)\) is equidimensional and $S_2$, the connectedness of its top-dimensional part then follows from Theorem~\ref{lattice_ideal_theorem}.
\end{proof}
The same connectedness-in-codimension-\(1\) conclusion also applies in certain
cases where \(I_{\mathcal L}\) is not necessarily prime. In the non-prime
case, the graded indecomposability result cannot be applied directly, since the
grading group \(\Gamma\) is not torsion-free. When \(I_{\mathcal L}\) is
minimally generated by exactly four elements, however, the Peeva--Sturmfels
resolution is sufficiently explicit to show directly that \(K^{d-1}(A)\) is
indecomposable as an ordinary \(A\)-module. The nongraded
indecomposability--connectedness criterion then gives the following
connectedness-in-codimension-\(1\) result.
\begin{proposition}\label{cor_lattice}
	Let $I_{\mathcal L}$ be a lattice ideal of
	codimension \(2\), minimally generated by \(4\) elements. Suppose that $A := S/I_{\mathcal L}$ is equidimensional and unmixed. Then the non-\(S_2\) locus of \(A\) is nonempty and has dimension
	\(\dim A-2\). Moreover,  $\operatorname{non}\text{-}S_2^{\mathrm{top}}(A)$ is connected in
	codimension \(1\), and  the non-\(S_2\) locus of \(A\) coincides
	with the non-Cohen--Macaulay locus.
\end{proposition}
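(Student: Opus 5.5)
The plan follows the proof of Theorem~\ref{lattice_ideal_theorem}. The change is that the graded indecomposability argument is replaced by a direct, ungraded indecomposability argument for \(K^{d-1}(A)\), which is possible because the case of exactly \(4\) generators is so explicit. Then the nongraded criterion, Theorem~\ref{sheaf_indecomposable} (or Theorem~\ref{canonical_M}), gives the connectedness.

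\emph{Step 1: \(A\) is not Cohen--Macaulay and not \(S_2\), and \(\dim C = d-2\).} With \(m=4\), \cite[Prop.~4.1]{peeva} shows \(A\) is not Cohen--Macaulay. The Peeva--Sturmfels resolution has length \(3\), so \(\depth A=d-1\), \(K^i(A)=0\) for \(i\le d-2\), and \(K^{d-1}(A)\ne 0\). Since \(A\) is unmixed and equidimensional, \(J(A)=0\) and the \(S_2\)-ification sequence \(0\to A\to B\to C\to 0\) is available.

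First I show \(C\neq 0\). If \(C=0\), then \(A=B\) is \(S_2\). Then Proposition~\ref{prop_canonical_module}(c) forces \(\dim K^{d-1}(A)\le d-3\). To rule this out, I would use the explicit shape of \(K^{d-1}(A)\cong\operatorname{coker}(d_3^*)\). For \(m=4\) there is a single quadrangle, so \(F_3=S(-\gamma)\) and \(F_2\) has rank \(4\). Hence \(K^{d-1}(A)\cong S/(a_1,\dots,a_4)\), where the \(a_i\) are the four entries of \(d_3\), each a difference of monomials or a monomial. This is a cyclic module defined by four entries coming from a codimension-\(2\) configuration. I would check that \(\operatorname{ht}(a_1,\dots,a_4)= 4\) in \(S\), so that its dimension is \(n-4=d-2\). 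This is consistent with Proposition~\ref{prop:assh-equality}, and it shows \(A\) is not \(S_2\).

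The argument of Theorem~\ref{lattice_ideal_theorem} then gives \(\dim C=d-2\). Also \(K^{d-1}(A)\) has dimension \(t+1=d-1\) in the indexing, which matches Proposition~\ref{prop:assh-equality} with \(t=d-2\).

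\emph{Step 2: indecomposability.} Since \(K^{d-1}(A)\cong S/\mathfrak b\) with \(\mathfrak b=(a_1,\dots,a_4)\) is cyclic, it is indecomposable as an \(A\)-module precisely when the ring \(S/\mathfrak b\) has no nontrivial idempotents. Every \(a_i\) lies in the homogeneous maximal ideal, so \(S/\mathfrak b\) is a positively graded quotient with a unique graded maximal ideal, and it is therefore connected. The same holds for \(\overline{K^{d-1}(A)}\), because its \(S_2\)-hull \(K^{d-2}(C)\) is cyclic-compatible. More carefully, I need \(K^{d-2}(C)\) itself to be indecomposable. I would show \(K^{d-1}(A)\cong K^{d-2}(C)\) by verifying \(\depth B_{\mathfrak p}\ge s+2\) via Proposition~\ref{prop:stratified-comparison}. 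Alternatively, I would note that \(S/\mathfrak b\) is a complete intersection, hence Cohen--Macaulay and \(S_2\), so \(\varphi\) is already the \(S_2\)-hull and an isomorphism by Theorem~\ref{thm_S_2hull}.

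\emph{Step 3: connectedness and the equality of loci.} Set \(\mathcal F=K^{d-2}(C)\cong S/\mathfrak b\). It is indecomposable and \(S_2\), so Theorem~\ref{sheaf_indecomposable} shows \(\Supp\mathcal F=\operatorname{non}\text{-}S_2^{\mathrm{top}}(A)\) is connected in codimension \(1\).

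For the last claim, \(\omega_A\) should be Cohen--Macaulay. By the dual resolution, \(\omega_A=\operatorname{coker}(d_2^*)\) fits into a sequence with \(K^{d-1}(A)\), which is Cohen--Macaulay of dimension \(d-2\). A depth count then gives \(\depth\omega_A=d\), so \(B\) is Cohen--Macaulay by \cite[Prop.~2.2]{S_2-ification_universal}. The argument of Corollary~\ref{cor_toric} then identifies the non-\(S_2\) and non-Cohen--Macaulay loci.

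The main obstacle is the height computation \(\operatorname{ht}\mathfrak b=4\), equivalently the complete-intersection property, for the entries of \(d_3\) in the Peeva--Sturmfels quadrangle resolution when \(\mathcal L\) is not saturated. I would first try to read it off from \cite[Construction~5.2]{peeva}, where the four entries are binomials in disjoint-support variable patterns attached to the four triangles.
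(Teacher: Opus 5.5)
Your plan follows the same path as the paper's proof:
\begin{itemize}
\item the one-quadrangle Peeva--Sturmfels resolution;
\item $K^{d-1}(A)\cong S/\mathfrak b$, with $\mathfrak b$ generated by the four entries of $d_3$;
\item Cohen--Macaulayness of dimension $d-2$, which forces $C\neq 0$ and then $\dim C=d-2$;
\item cyclicity plus grading for indecomposability, and Theorem~\ref{sheaf_indecomposable} for connectedness;
\item a depth count on the dual resolution, showing that $\omega_A$, hence $B$, is Cohen--Macaulay.
\end{itemize}
The gap is the step you flag yourself, $\operatorname{ht}\mathfrak b=4$, and you would look for it in the wrong form. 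In \cite[Construction~5.2]{peeva} the entries of $d_3$ are not binomials. The map $d_3$ is the column $(-x^{\underline s},x^{\underline t},x^{\underline r},-x^{\underline p})^{T}$ of pairwise relatively prime monomials. Monomials with pairwise disjoint supports form an $S$-regular sequence. Hence $K^{d-1}(A)\cong S/(x^{\underline s},x^{\underline t},x^{\underline r},x^{\underline p})$ is a complete intersection of dimension $n-4=d-2$.

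This fact is what the rest of your argument depends on:
\begin{itemize}
\item Non-Cohen--Macaulayness from \cite[Prop.~4.1]{peeva} does not by itself give $C\neq 0$. With $\depth A=d-1$, the ring $A$ would still be $S_2$ if $\dim K^{d-1}(A)\le d-3$.
\item The $S_2$ and equidimensionality hypotheses needed for Theorem~\ref{sheaf_indecomposable} come from the complete-intersection property.
\item Your depth count needs $\depth\operatorname{im}d_3^*=\depth K^{d-1}(A)+1=d-1$. This again requires $K^{d-1}(A)$ to be Cohen--Macaulay of dimension $d-2$.
\end{itemize}
Once the entries are monomials, $S/\mathfrak b$ is standard $\mathbb N$-graded with degree-zero part $k$, so the idempotent argument for indecomposability is immediate.

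There are also some smaller slips.
\begin{itemize}
\item $\omega_A=\operatorname{Ext}^2_S(A,S)$ is $\ker d_3^*/\operatorname{im}d_2^*$, not $\operatorname{coker}d_2^*$. It is a proper submodule of $\operatorname{coker}d_2^*$ with quotient $\operatorname{im}d_3^*$. The depth count still works from $0\to\omega_A\to\operatorname{coker}d_2^*\to\operatorname{im}d_3^*\to 0$, using $\depth\operatorname{coker}d_2^*\ge d$. Alternatively, use the paper's sequence $0\to\operatorname{im}d_2^*\to\ker d_3^*\to\omega_A\to 0$.
\item $K^{d-1}(A)$ has dimension $t=d-2$; only its index is $t+1$.
\item The remark that $K^{d-2}(C)$ is ``cyclic-compatible'' carries no content.
\item Your second justification is correct once $S/\mathfrak b$ is known to be Cohen--Macaulay: an unmixed equidimensional $S_2$-module is its own $S_2$-hull, so $\varphi$ is an isomorphism. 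The detour is unnecessary, though. Since $K^{d-1}(A)$ is itself $S_2$ and unmixed, Proposition~\ref{prop:assh-equality} already gives $\Supp K^{d-1}(A)=\operatorname{non}\text{-}S_2^{\mathrm{top}}(A)$. The paper then applies Theorem~\ref{sheaf_indecomposable} directly to $K^{d-1}(A)$.
\end{itemize}
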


\begin{proof}
	Let $d =\dim A$. 
By \cite[Construction~5.2]{peeva}, \(A\) has a minimal
free resolution 
\begin{equation}\label{resolution}
	0\to S\xrightarrow{d_3} S^4\xrightarrow{d_2} S^4\xrightarrow{d_1} S
	\to A\to 0,
\end{equation}
where the differential map $d_3 : S \to S^4$ is given by
\[
d_3 : S \xrightarrow{
	\begin{pmatrix}
		- x^{\underline{s}} \\
		x^{\underline{t}} \\
		x^{\underline{r}} \\
		- x^{\underline{p}}
	\end{pmatrix}
} S^4,
\]
with $x^{\underline{s}}, x^{\underline{t}}, x^{\underline{r}}, x^{\underline{p}}$  pairwise relatively prime monomials. Hence
\(K^{d  -1}(A)
\cong
S/(x^{\underline{s}},x^{\underline{t}},x^{\underline{r}},x^{\underline{p}}).\)
Since these monomials  form a regular
sequence in \(S\), \(K^{d-1}(A)\) is Cohen--Macaulay of dimension $d-2$. Hence \(A\) cannot be
\(S_2\); otherwise Proposition \ref{prop_canonical_module}(c) would give
\(\dim K^{d-1}(A)\le d-3\). Thus \(C\neq 0\), and Theorem \ref{lattice_ideal_theorem} gives
\(\dim C=d-2\) and $\depth A=d-1$.

We now prove that \(K^{d-1}(A)\) is indecomposable as an \(A\)-module. Since
\(K^{d-1}(A)\) is an \(A\)-module, the ideal \(I_{\mathcal L}\) annihilates it.
 Hence
\(I_{\mathcal L}\subseteq
(x^{\underline{s}},x^{\underline{t}},
x^{\underline{r}},x^{\underline{p}}) \) and  \(K^{d-1}(A)\cong A/J\) for some   ideal \(J\subseteq A\). Thus
\[
\operatorname{End}_A(K^{d-1}(A))
\cong \operatorname{End}_A(A/J)
\cong A/J
\cong
S/(x^{\underline{s}},x^{\underline{t}},
x^{\underline{r}},x^{\underline{p}}).
\]
The last ring is naturally \(\mathbb N\)-graded with degree-zero part \(k\), and
therefore has no nontrivial idempotents. Hence \(K^{d-1}(A)\) is
indecomposable as an \(A\)-module. Since \(K^{d-1}(A)\) is Cohen--Macaulay, it
is \(S_2\) and equidimensional. Therefore, by
Theorem~\ref{sheaf_indecomposable}, \(\Supp  K^{d-1}(A)  =   \operatorname{non}\text{-}S_2^{\mathrm{top}}(A) \)
  is connected in codimension $1$. 
  
 We now  show that the non-\(S_2\) locus coincides with the
  non-Cohen--Macaulay locus. 
 Since \(K^{d+1}(A)=K^{d+2}(A)=0\), equivalently
 \(\Ext_S^0(A,S)=\Ext_S^1(A,S)=0\), the dualized complex of \eqref{resolution} is exact at its first two
 terms. 
 Therefore we have exact sequences
 \[
 0\to S\xrightarrow{d_1^*}S^4\to \operatorname{im} d_2^*\to 0,
 \]
 \[
 0\to \ker d_3^*\to S^4\to \operatorname{im} d_3^*\to 0,
 \]
 \[
 0\to \operatorname{im} d_3^*\to S\to K^{d-1}(A)\to 0,
 \]
 \[
 0\to \operatorname{im} d_2^*\to \ker d_3^*\to \omega_A\to 0.
 \]
 Since \(K^{d-1}(A)\) is Cohen--Macaulay of dimension \(d-2\), the third exact
 sequence gives \(\depth \operatorname{im} d_3^*=d-1.\) The first two exact sequences give \(\depth \operatorname{im} d_2^*\ge d+1 \) and \(\depth \ker d_3^*\ge d.\) Thus the last exact sequence gives \(\depth \omega_A\ge d.\)
 Since \(\dim \omega_A=d\), the canonical module \(\omega_A\) is
 Cohen--Macaulay, by \cite[Prop.~2.2]{S_2-ification_universal},   
 \(B=\End_A(\omega_A)\) is also Cohen--Macaulay.
  Therefore
  the non-\(S_2\) locus of \(A\) coincides with its non-Cohen--Macaulay locus.

 \end{proof}

\subsubsection{\(S_2\)-ifications that are \(S_k\) but not \(S_{k+1}\)}
The next construction gives a family of examples whose \(S_2\)-ifications are
explicit and satisfy \(S_k\) but not \(S_{k+1}\). The input is a
Stanley--Reisner ring that is \(S_k\) but not \(S_{k+1}\).

 Let \(\Delta\) be a simplicial complex over a field \(K\). For a face
 \(F\in \Delta\), recall that the link of $F$ in $\Delta $ is given by 
 \[
 \operatorname{lk}_\Delta(F)
 =
 \{\,G\in\Delta : F\cap G=\emptyset,\; F\cup G\in \Delta\,\}.
 \]
 By Hochster's formula and the homological characterization of Serre's
 condition, the Stanley--Reisner ring \(K[\Delta]\) satisfies \(S_r\) if and
 only if, for every face \(F\in\Delta\),
 \begin{equation}\label{S_r condition}
 	\widetilde H_i(\operatorname{lk}_\Delta(F);K)=0
 	\qquad
 	\text{for } i<\min\{r-1,\dim\operatorname{lk}_\Delta(F)\};
 \end{equation}
 see  \cite[p.~1]{Murai--Terai}.

Fix an integer \(k\ge 2\), and let \(\Delta_k\) be a finite triangulation of \(S^{k-1}\times S^{k-1}\). Set
\(\widetilde R_k:=K[\Delta_k].\)
Since \(\dim\Delta_k=2k-2\), we have
\(\dim \widetilde R_k=2k-1.\) 

For the empty face,
\(\operatorname{lk}_{\Delta_k}(\emptyset)=\Delta_k\). By the Künneth formula,
\[
\widetilde H_i(S^{k-1}\times S^{k-1};K)=0
\quad\text{for } i<k-1,
\qquad
\widetilde H_{k-1}(S^{k-1}\times S^{k-1};K)\ne 0.
\]
Thus the condition \eqref{S_r condition} holds for \(F=\emptyset\) when
\(r=k\), but fails for \(r=k+1\)  already in degree \(i=k-1\).

Since \(|\Delta_k|\) is a closed
topological manifold of dimension \(2k-2\), by
\cite[Thm.~63.2]{homology_sphere}, for every nonempty face
\(\emptyset \neq F\in\Delta_k\), the link \(\operatorname{lk}_{\Delta_k}(F)\) has the
homology of a sphere of dimension
\((2k-2)-\dim F-1=2k-2-|F|\).
In particular, its reduced homology vanishes in all degrees below its top
dimension. Hence \eqref{S_r condition} is satisfied for every nonempty face
\(F\) when \(r=k\). Therefore \(\widetilde R_k\) satisfies \(S_k\)  but not \(S_{k+1}\).

Let
\(R_k:=(\widetilde R_k)_{\mathfrak n},\)
where \(\mathfrak n\) is the graded maximal ideal. Then
\(\dim R_k=2k-1,\)
and \(R_k\) is \(S_k\)

Choose an \(R_k\)-regular sequence \(\ell_1,\ell_2\in \mathfrak n\), and set
\[
J:=(\ell_1,\ell_2),\qquad B_k:=R_k\times R_k.
\]
Define
\[
A_k:=R_k\times_{R_k/J}R_k
=
\{(r_1,r_2)\in R_k\times R_k: r_1\equiv r_2 \pmod J\}.
\]
Then we have an exact sequence
\[
0\to A_k\to B_k\to C_k\to 0,
\qquad C_k\cong R_k/J.
\]
Moreover, \(C_k\) is cyclic as an \(A_k\)-module; indeed,
\(C_k\cong A_k/\operatorname{ann}_{A_k}C_k\).
Thus \(B_k\) is a finite \(A_k\)-module  and we have 
\(\dim A_k=\dim B_k=\dim R_k\). Since \(J\) is generated by an \(R_k\)-regular sequence of length \(2\),
\[
\dim C_k=\dim R_k-2=\dim A_k-2.
\]
Furthermore, \(B_k=R_k\times R_k\) is \(S_k\), hence \(S_2\). Therefore, by the standard
characterization of the \(S_2\)-ification \cite[Thm.~1.6]{S_2-ification_universal}, \(B_k\) is the
\(S_2\)-ification of \(A_k\).

For completeness, we also record the local cohomology of this family, which makes explicit how the comparison map \(\varphi\) behaves in these examples. In this case, since \(C_k\) is a quotient ring of \(A_k\), the canonical module \(\omega_{C_k}\) of \(C_k\) agrees with the canonical module \(K_{C_k}\) of \(C_k\) viewed as an \(A_k\)-module.

Let \(\mathfrak n\) denote the maximal ideal of \(R_k\), equivalently, the graded maximal ideal of \(\widetilde{R_k}\), and let
\(\mathfrak m\) denote the maximal ideal of \(A_k\). By Hochster's formula
\cite[Thm.~5.3.8]{cohen macaulay}, the multigraded pieces   \(\bigl(H^i_{\mathfrak n}(\widetilde R_k)\bigr)_a \) vanish unless 
\(a\in \mathbb Z^N_{\le 0},\)
where \(N\) is the number of vertices of \(\Delta_k\). For such \(a\), set
\( G_a:=\{j:a_j<0\}.\) If \(G_a\notin\Delta_k\), then
\(\bigl(H^i_{\mathfrak n}(\widetilde R_k)\bigr)_a=0\). If
\(G_a\in\Delta_k\), then Hochster's formula  \cite[Thm.~5.3.8]{cohen macaulay} gives
\[
\bigl(H^i_{\mathfrak n}(\widetilde R_k)\bigr)_a
\cong
\widetilde H_{i-|G_a|-1}\bigl(\operatorname{lk}_{\Delta_k}(G_a);K\bigr).
\]
  Suppose \(G_a\ne\emptyset\), then   \(\operatorname{lk}_{\Delta_k}(G_a)\) has the
homology of a sphere of dimension
\(2k-2-|G_a|\). Thus
\(\widetilde H_{i-|G_a|-1}\bigl(\operatorname{lk}_{\Delta_k}(G_a);K\bigr)\ne 0\)
can happen only when
\(i-|G_a|-1=2k-2-|G_a|,\)
that is, only when \(i=2k-1\). Set \(d:=2k-1=\dim R_k\). It follows that, for \(i<d\), the only possible
nonzero multigraded piece of \(H^i_{\mathfrak n}(\widetilde R_k)\) occurs when
\(G_a=\emptyset\). 
  Hence,   we obtain \(K\)-vector space isomorphisms
\[
H^i_{\mathfrak n}(  R_k)\cong
H^i_{\mathfrak n}(\widetilde R_k)
\cong
\bigl(H^i_{\mathfrak n}(\widetilde R_k)\bigr)_0
\cong
\widetilde H_{i-1}(\Delta_k;K)
\qquad (i<d).
\]
Set \(T_k:=H^{2k-1}_{\mathfrak n}(R_k),\)
and using that 
\[
\widetilde H_j(S^{k-1}\times S^{k-1};K)
=
\begin{cases}
	K^{\oplus 2}, & j=k-1,\\
	K, & j=2k-2,\\
	0, & \text{otherwise},
\end{cases}
\]
we have 
\begin{equation}\label{R_k_local_cohomology}
	H^i_{\mathfrak n}(R_k)\cong
	\begin{cases}
		K^{\oplus 2}, & i=k,\\
		T_k, & i=2k-1,\\
		0, & \text{otherwise}.
	\end{cases}
\end{equation}
Thus
\begin{equation}\label{B_k_local_cohomology}
	H^i_{\mathfrak m}(B_k)
	\cong
	H^i_{\mathfrak n}(R_k)\oplus H^i_{\mathfrak n}(R_k)
	\cong
	\begin{cases}
		K^{\oplus 4}, & i = k,\\
		T_k^{\oplus 2}, & i = 2k - 1,\\
		0, & \text{otherwise}.
	\end{cases}
\end{equation}
Now \(C_k\cong R_k/J\), where \(J\) is generated by part of an
\(R_k\)-regular sequence. Using the long exact sequences of local cohomology
for \(R_k\to R_k/J\) and for \(0\to A_k\to B_k\to C_k\to 0\), one computes,
for \(k\ge 4\),
\[
H^i_{\mathfrak m}(C_k)\cong
\begin{cases}
	K^{\oplus 2}, & i=k-2,\\
	K^{\oplus 4}, & i=k-1,\\
	K^{\oplus 2}, & i=k,\\
	(0:J)_{T_k}, & i=2k-3,\\
	0, & \text{otherwise},
\end{cases}
\]
and
\[
H^i_{\mathfrak m}(A_k)\cong
\begin{cases}
	K^{\oplus 2}, & i=k-1,\\
	K^{\oplus 6}, & i=k,\\
	(0:J)_{T_k}, & i=2k-2,\\
	T_k^{\oplus 2}, & i=2k-1,\\
	0, & \text{otherwise}.
\end{cases}
\]
In this case the comparison map
\(\varphi\colon K^{2k-2}(A_k)\longrightarrow K^{2k-3}(C_k)\)
is an isomorphism.

For \(k\ge 4\), since \(R_k\) is \(S_k\) and \(J\) is generated by an
\(R_k\)-regular sequence of length \(2\), it follows that
\(C_k\cong R_k/J \)
is \(S_{k-2}\), hence \(S_2\). Moreover, \(C_k\) is equidimensional and
unmixed. Therefore
\(\operatorname{End}_{C_k}(\omega_{C_k})\cong C_k.\)
Since \(C_k\) is local, \(\omega_{C_k}\) is indecomposable as a
\(C_k\)-module. Consequently, the
non-\(S_2\) locus of \(A_k\), equivalently \(\Spec C_k\), is connected in
codimension \(1\).

When \(k=3\), \eqref{R_k_local_cohomology} and
\eqref{B_k_local_cohomology} still hold, but the long exact sequences for
\(A_3\) and \(C_3\) do not split as cleanly. In this case,
\[
H^i_{\mathfrak m}(C_3)\cong
\begin{cases}
	K^{\oplus 2}, & i=1,\\[4pt]
	K^{\oplus 4}, & i=2,\\[4pt]
	\text{an extension }
	0\to K^{\oplus 2}\to H^3_{\mathfrak m}(C_3)
	\to (0 : J)_{T_3}\to 0, & i=3,\\[4pt]
	0, & \text{otherwise}.
\end{cases}
\]
Similarly, from the long exact sequence associated to
\(0\to A_3\to B_3\to C_3\to 0\), one obtains
\[
H^i_{\mathfrak m}(A_3) \cong
\begin{cases}
	K^{\oplus 2}, & i = 2,\\[4pt]
	\text{an extension } 
	0 \to K^{\oplus 4} \to H^3_{\mathfrak m}(A_3) \to K^{\oplus 2}\to 0, & i = 3,\\[4pt]
	(0 : J)_{T_3}   , & i = 4,\\[4pt]
	T_3	 ^{\oplus 2}, & i = 5,\\[4pt]
	0, & \text{otherwise}.
\end{cases}
\]
In this case, the map
\(\varphi : K^{4}(A_3)\to K^{3}(C_3)\)
fits into a short exact sequence
\begin{equation}\label{exact_sequence_k3}
	0 \longrightarrow K^{4}(A_3)
	\xrightarrow{\ \varphi\ }
	K^{3}(C_3)
	\longrightarrow K^{\oplus 2}
	\longrightarrow 0.
\end{equation}

To show that $C_3$ is connected in codimension $1$, we apply Grothendieck’s connectedness theorem \cite[Thm.~19.2.10]{local cohomology}. Since \(R_3\) is \(S_2\), its completion \(\widehat{R_3}\) is also \(S_2\). Thus \(\widehat{R_3}\) is connected in
codimension \(1\), so
\[
c(\widehat{R_3}) \ge \dim \widehat{R_3} - 1 = 4.
\]
Moreover, since \(R_3\) is equidimensional and universally catenary,
\(\widehat{R_3}\) is equidimensional; see, for example, the proof of
Corollary~\ref{universally catenary}. Thus
\[
\operatorname{sdim}\widehat{R_3}
=
\dim\widehat{R_3}
=
\dim R_3
=
5.
\]
 Applying Grothendieck's connectedness
theorem to the ideal \(J\widehat{R_3}\), we get
\[
c( {C_3})\geq c(\widehat{C_3})
=
c(\widehat{R_3}/J\widehat{R_3})
\ge
\min\{c(\widehat{R_3}),\operatorname{sdim}\widehat{R_3}-1\}
-
\operatorname{ara}(J\widehat{R_3})
\ge 2.
\]
Since \(\dim C_3=3\), it follows that \( \operatorname{Spec }C_3\) is connected in codimension
\(1\). Consequently, by Theorem \ref{canonical_M} \(K^3(C_3) \cong \omega_{C_3}\) is indecomposable.
 
\begin{remark}
	Indeed, since \(\Delta_k\) is a triangulation of the closed orientable manifold
	\(S^{k-1}\times S^{k-1}\), Gräbe's description \cite[\S3.5]{grabe} of canonical modules of
	Stanley--Reisner rings of homology manifolds gives
	\(	\omega_{\widetilde R_k}\cong \widetilde R_k,\)
	hence \(\omega_{R_k}\cong R_k\). Therefore, for \(k\ge 4\),
	\[
	\omega_{C_k}\cong \omega_{R_k}/J\omega_{R_k}\cong R_k/J\cong C_k.
	\]
	For $k = 3$, the sequence \eqref{exact_sequence_k3} becomes
	\[
	0 \to C_3 \xrightarrow{i} \omega_{C_3} \to K^{\oplus 2} \to 0.
	\]
\end{remark}

\subsubsection{A noncyclic \(S_2\)-defect module}

Let \((E,\mathfrak n)\) be a Cohen--Macaulay local ring with canonical module
\(\omega_E\), and assume that \(E\) is not Gorenstein. Then \(\omega_E\) is
noncyclic; see \cite[Ch.~3]{local cohomology}.

Define
\[
D:=E\ltimes \omega_E,
\qquad
B:=D[[u,v]],\qquad A:=
\left\{
\sum_{i,j\ge 0}(a_{ij},m_{ij})u^iv^j\in B
\;\middle|\;
m_{00}=0
\right\}.
\]
The ring \(A\) is local. 
The map \(\rho\colon A\to E\), 
\(\rho\!\left(\sum_{i,j\ge 0}(a_{ij},m_{ij})u^iv^j\right)=a_{00}\)
makes \(\omega_E\) an \(A\)-module. And the map \[
\pi:B\to \omega_E,\qquad
\pi\!\left(\sum_{i,j\ge 0}(a_{ij},m_{ij})u^iv^j\right)=m_{00}
\]
is \(A\)-linear and has kernel \(A\). Hence, setting \(C:=B/A\), we have an
exact sequence of \(A\)-modules
\[
0\to A\to B\to C\to 0,
\qquad
C\cong \omega_E.
\]
Moreover, by \cite[Cor.~2.12]{aoyama}, \(B\) is Gorenstein, hence \(S_2\). Since the \(A\)-action on
\(C\cong\omega_E\) factors through \(E\), we have
\[
\dim_A C =\dim E=\dim B-2.
\]
Also,   \(C\cong \omega_E\) is finite over \(E\),  thus \(C\) is finite over \(A\). Then  \(B\) is also finite over \(A\),  hence  $A$ is noetherian by \cite[Thm.~I.3.7]{matsumura}, and \(\dim A=\dim B\).
By the standard characterization of \(S_2\)-ifications \cite[Thm.~1.6]{S_2-ification_universal}, \(B\) is the
\(S_2\)-ification of \(A\).

Since \(\omega_E\) is noncyclic, \(C\) is not of the form \(A/I\) for any
ideal \(I\subseteq A\). Thus, in contrast to the preceding fiber-product
examples, the defect module \(C\) need not be a quotient ring defining the
non-\(S_2\) locus; it may occur only as a finite \(A\)-module.

	\appendix

	\bibliographystyle{alpha}

\end{document}